\documentclass[a4paper,11pt]{article}

\usepackage[left=2cm,right=2cm,top=2.5cm,bottom=2.5cm]{geometry}
\usepackage[english]{babel}
\usepackage[utf8x]{inputenc}
\usepackage[T1]{fontenc}
\usepackage{amsmath}
\usepackage{amsfonts}
\usepackage{dsfont}
\usepackage{amsthm}
\usepackage{stmaryrd}
\usepackage{graphicx}
\usepackage{float}
\usepackage[font=footnotesize]{caption}
\usepackage[normalem]{ulem}
\usepackage{csquotes}
\usepackage{xcolor}
\usepackage{hyperref}
\usepackage{fancyref}
\usepackage{listings}
\usepackage{placeins}
\usepackage{float}

\usepackage{lmodern}

\usepackage{authblk}

\newenvironment{AMS}{\textbf{\textit{MSC 2020 Subject Classification:}}}{}
\newenvironment{keywords}{\textbf{\textit{Keywords:}}}{}
\newenvironment{acknowledgements}{\textbf{Acknowledgements:}}{}

\theoremstyle{plain}
\newtheorem{Thm}{Theorem}[section]
\newtheorem{Prop}[Thm]{Proposition}
\newtheorem{Lem}[Thm]{Lemma}
\newtheorem{rem}[Thm]{Remark}

\begin{document}
\title{A generalised spatial branching process with \emph{ancestral} branching to model the growth of a filamentous fungus}

\author{Lena Kuwata\footnote{lena.kuwata@u-paris.fr}}

\affil{Universit\'e Paris Cit\'e, CNRS, MAP5, 75006 Paris, France}

\maketitle

\begin{abstract}
In this work, we introduce a spatial branching process to model the growth of the mycelial network of a filamentous fungus. In this model, each filament is described by the position of its tip, the trajectory of which is solution to a stochastic differential equation with a drift term which depends on all the other trajectories. Each filament can branch either at its tip or along its length, that is to say at some past position of its tip, at some time- and space-dependent rates. It can stop growing at some rate which also depends on the positions of the other tips. We first construct the measure-valued process corresponding to this dynamics, then we study its large population limit and we characterise the limiting process as the weak solution to a system of partial differential equations.
\end{abstract}

\begin{keywords}
Spatial birth-death process, historical process, historical branching, interacting particle systems, large population limit
\end{keywords}

\begin{AMS}
  \textit{Primary:} 60J25, 60J80, 60F17
  \textit{Secondary:} 60G57, 92D25
\end{AMS}

\section{Introduction}

Filamentous fungi are able to grow in a vast array of environments, making them ubiquitous in nature. They colonise their environment by forming filamentous structures, called \emph{hyphae}, which lengthen and branch to create networks, called \emph{mycelia}, the scale of which can range from a few micrometres to a few kilometres. To feed the mycelium, the hyphae produce enzymes to decompose the surrounding organic matter, thereby playing a key role in the functioning of their ecosystem. The hyphae can also sense their local environment, and efficient chemical communication along the network then enables the mycelium to quickly respond to threats such as attacks by predators, physical obstacles, or noxious local conditions, by partially reorganising or reorienting the growth of the mycelium \cite{Boddy,Fricker}.

In this paper, we introduce a measure-valued stochastic process which aims at modelling the growth of a mycelium in interaction with itself and with its environment. For instance, we may be interested in studying the impact of the local harvesting and sharing of nutrients on the fungus growth, or in understanding the spatial and temporal scales of the communication across the network that is generated by the fungus being attacked by predators in some region of space. To this end, we need to incorporate the following key biological components of the structure and growth of a mycelium in the model. The network is composed of ``primary'' hyphae, which grow more or less radially, and of ``secondary'' hyphae which branch off from the primary structure, approximately uniformly along the existing hyphae (we shall later refer to it as \emph{lateral} branching). The primary hyphae expand the area covered by the mycelium, exploring the environment in search of new sources of nutrients. Their number increases by branching in two at their tips (also called \emph{apexes}; we shall later refer to this branching at the tip as \emph{apical} branching). The role of the secondary hyphae is to increase the density of the network by growing in different directions from the primary hyphae; these hyphae also branch at their tips and uniformly along the existing length. When two hyphae cross, they can sometimes merge (this is called \emph{anastomosis}), increasing the connectivity of the mycelium by creating shortcuts for the diffusion of molecules through the network. The hyphae are only a few micrometres in diameter, while the mycelium can cover up to a few square kilometres, demonstrating the multiscale nature of these networks. Although it is difficult to quantify such phenomena, hyphae are thought to interact with each other, for instance through a tendency to avoid each other in dense areas, but also through local sharing of different resources. Having a general model of interactions between hyphae will allow us to test which of these effects is stronger in experimental cultures of filamentous fungi \cite{Dikec,Ledoux-these} by confronting the observed patterns with the predictions derived from variants of the process based on different interaction kernels. This is one of the main objectives of the combination of theoretical and empirical research work initiated in \cite{Dikec} and pursued by the pluridisciplinary NEMATIC consortium, involving colleagues from biology, physics, mathematics, computer science and geography \cite{catellier24,Catellier-DAngelo-Ricci,chassereau2024full,ledoux2024characterization,ledoux2022prediction,ledoux2023prediction,Tomasevic-Bansaye-Veber}. As monitoring mycelial growth in the laboratory requires fungi to be constrained to grow in two dimensions (on the surface of a Petri dish), in our model hyphal tips are supposed to evolve in $\mathbb{R}^2$. In nature, fungi may also grow on surfaces; furthermore, the model can easily be formulated with a three-dimensional spatial structure.

The literature on mycelial growth modelling is already large. Some of the models focus on the scale of a single hypha and aim to understand the mechanisms triggering its extension (see, \emph{e.g.}, \cite{Balmant,Tindemans}). Other models focus on the scale of the whole mycelium, described by its total biomass, and use systems of ordinary differential equations to describe the interaction between the mycelium and its environment, and its effects on the growth of the fungus and on certain characteristics of the environment (see, \emph{e.g.}, \cite{Lamour}). One of the main difficulties lies in establishing a link between these two scales. To this end, a number of computational models have been developed. These models can be either lattice-based (for more computational efficiency) or lattice-free (yielding results that are closer to what is typically observed), and consist in modelling the growth and branching dynamics of the hyphae during one unit of time and using these rules to update the network at regular time steps. For a review, see \cite{Boswell-Davidson}. The analysis of these models is usually purely numerical, and relies on exploring the space of parameters and finding parameters producing the patterns that most closely match the observations, rather than deriving long-term growth properties of the mycelium. Spatially explicit models have been introduced based on reaction-diffusion partial differential equations \cite{Du-Tran-Perre}, or on a system of stochastic differential equations encoding the behaviour of each hypha and its mean-field deterministic limit~\cite{Catellier-DAngelo-Ricci}. The latter is similar to models of tumour-induced angiogenesis \cite{Flandoli} and allows to study global quantities such as the stationary shape and speed of the invasion front formed by the mycelium. In the model developed in \cite{Catellier-DAngelo-Ricci}, apical branching occurs at constant rate, lateral branching occurs at a rate proportional to the current length of the hypha, and anastomosis occurs at a constant rate. The motion of each apex is described by a Langevin-type second order stochastic differential equation driven by a field of nutrients which are absorbed along the hyphae. The evolution of the field of nutrients is described by a reaction-diffusion equation in which the reaction term corresponds to the absorption of the nutrients by the mycelium, so that the two systems of equations are coupled. Taking the mean-field limit of this model gives rise to a system of coupled partial differential equations for the evolution of the mycelium and of the field of nutrients. While the aim of our work is to, similarly, obtain a large population limit result to describe the behaviour of the network, we shall use a different approach to modelling the mycelium: we use two distinct mechanisms to describe apical and lateral branching events, we incorporate different sources of interactions between the filaments and we replace the Langevin-type movement of a single hypha by the solution to a stochastic differential equation with a Brownian component (with a constant variance coefficient that should be thought about as being small, despite our use of the generic notation $\sigma^2$), and a stronger drift term encoding potential interactions. In doing so, we complement the study proposed in~\cite{Catellier-DAngelo-Ricci} and explore alternative ways of modelling interactions and hyphal trajectories for comparison with experimental data.

As a first step towards using the data obtained thanks to the experimental setup described in \cite{Dikec} to infer certain growth parameters and quantify the impact of various forms of stress (nutrient depletion, pH, ...) on the mycelial growth and structure, a non-spatial growth-fragmentation model for the expansion of the mycelium was introduced in \cite{Tomasevic-Bansaye-Veber}. In this model, the interactions are left aside and it is assumed that over the timescale of an experiment, the consumption of nutrients by the mycelium is negligible, so that the environment remains homogeneous over time and the spatial organisation of the mycelium does not matter. The hyphae are encoded by their lengths and grow at a constant speed. Apical branching occurs at a constant rate and hyphae branch laterally at a rate proportional to their lengths, while anastomosis is neglected. An explicit expression for the stationary profile of the mean measure of the length of the hyphae is obtained, which, along with long time behaviour and law of large number results, gives an estimator for the empirical distribution of hyphal lengths in the long time. This allows to infer the growth parameters (elongation speed and branching rates) appearing in the model from a panorama of the state of the fungal mycelium at the end of a growth experiment \cite{inference}. Here, we aim at extending the model proposed in~\cite{Tomasevic-Bansaye-Veber} by taking into account the spatial structure of the mycelium and the interactions between the hyphae.

To this end, the mycelium is encoded as the history of a spatial birth-death process, which we keep calling a generalised branching process by analogy with the branching structure of the mycelium (although the branching property in a mathematical sense is obviously lost due to the interactions between the individuals). Each individual in our branching process corresponds to an apex and is encoded by its position $x \in \mathbb{R}^2$. Each hypha then corresponds to the trajectory followed by an individual from time $0$ in the process, and the mycelium corresponds to the set of all such trajectories. That is to say, at any time $t \ge 0$, the mycelial network is fully represented by the set
\begin{equation}\label{def mycelium}
\mathfrak{M}_t := \{X^u_s : u \in V_s, s\le t \},
\end{equation}
where $V_s$ is the index set of all individuals alive at time $s$, and $X^u_s$ is the position at time $s$ of the apex labelled by $u\in \mathbb{N}$. This ``historical'' representation of hyphae allows us to encode the time at which each infinitesimal portion of filament was created, which can prove useful to model interactions within the network and with the environment. For instance, if a portion of hypha created at position $X^u_s$ consumes nutrients at a constant rate $c$, then at time $t \ge s$, a concentration $c_0 e^{-c(t-s)}$ of nutrients will remain locally, where $c_0$ is the initial concentration present at position $X^u_s$. For mathematical convenience, the population of apexes alive at time $t$ will be represented by its empirical distribution rather than by a set as in \eqref{def mycelium} (see Section~\ref{Section 2}). We model the interactions within the network by taking the trajectories of the apexes to be solutions to stochastic differential equations with drift terms depending on all the other trajectories, similar to the ones coming from chemotaxis modelling (see, \emph{e.g.}, \cite{Jabir}). We shall assume that lateral and apical branching occurs at space-dependent rates $b_1$ and $b_2$ and that hyphae can stop growing at some rate $d$ which also depends on the position of the other apexes. The latter is a simplified way of modelling anastomosis (akin to a density-dependent regulation of local population sizes), in which the merging into another hypha is replaced by the disappearing of the apex where the death event happens (note that only the apex disappears from the system at its death time $\mathfrak{t}^u$, while its past trajectory $(X^u_s)_{0\leq s\leq \mathfrak{t}^u}$ keeps influencing the future of the process -- that is, the filament stays there but simply stops growing and branching apically). As the probability of an apex encountering another hypha is proportional to the occupation of space in the vicinity of the apex, the death rate should be taken proportional to the local density of pieces of filaments. However, to simplify the model, the death rate will depend only on the position of the apexes, instead of the whole history of the process, so that hyphae will stop growing at a rate proportional to the number of apexes present around their tips, which serves as a proxy for the occupation of space. In this model, the interactions with the environment are encoded by the space dependence of the branching and death rates $b_1$, $b_2$ and $d$, which will be deterministic. A possible extension of the model would therefore be to consider growth in a stochastic environment in which areas that are less conducive to growth are randomly distributed. This could be done by either locally increasing the death rate or locally decreasing the branching rates, as is done, for instance, in \cite{Veber} and \cite{Englander} respectively in the case of branching Brownian motion. The interactions between the mycelium and its environment could also involve interactions with individuals from other species, in a context of either competition or symbiosis, in the vein of catalytic branching processes (see \emph{e.g.} \cite{Dawson-Etheridge, Dawson-Perkins}).

Spatial birth-death processes have been used extensively to model the evolution of structured populations, starting with seminal work on branching Brownian motion, super-Brownian motion and Fleming-Viot superprocesses \cite{Dawson,dynkin,FlemingViot} (see also the reviews \cite{Etheridge,Perkins}), and going towards more ecologically driven models (see, \emph{e.g.} \cite{BansayeMeleard,BKE,Champagnat,FM}). In these models, each individual moves according to some Markov process and the interactions between individuals are modelled through a competition term in the individual death rate. Historical versions of these branching processes have also been considered, in which each individual alive at time $t$ is no longer represented by its current location in space, but by the whole path followed by its ancestors between time $0$ and $t$. Historical processes are classically encoded as measure-valued Markov processes \cite{DawsonPerkins,Tran} (the measures acting on the set of trajectories over finite time intervals), or as Markov processes with values in the set of measured metric spaces \cite{Depperschmidt,Greven09} (the individuals alive at time $t$ forming the leaves of an evolving tree inducing a distance between them). In \cite{Tran}, the measure-valued historical framework is used to allow the trajectories in space and the instantaneous reproduction and death rates of the individuals that are currently alive to depend on the characteristics of their ancestors.

The novelty of our model compared to these historical processes is the occurrence of birth events \emph{along the ancestry}. That is, not only extant individuals/apexes can give birth to new individuals appearing at time $t$ at the current location of the ``parent'', but a birth event can also occur at time~$t$ at the location $X_s^u$ of the ``ancestor'' at time $s$ of the individual labelled by $u$ at time $t$. From the modelling point of view, this corresponds to the fact that previously created pieces of filaments remain at all times and can be the place of a lateral branching event, happening several units of time after the passage of the apex. The tree describing the network structure is therefore not ultrametric. In addition, the trajectories of the apexes depend on the past of the entire mycelium, so that the equations describing these trajectories are coupled. The stochastic differential equations used to model the movement of the apexes are similar to those appearing in interacting particle systems associated to models for chemotaxis with past dependence, where each particle interacts with all the past of other particles by means of a time-integrated functional involving an interaction kernel, albeit with a bounded interaction kernel instead of the singular ones used for chemotaxis modelling (see, \emph{e.g.}, \cite{Budhiraja,Jabir}). However, note that in these models for chemotaxis, the number of individuals is held constant, while in our model births and deaths make the population size evolve in time. In particular, ancestral branching is another source of dependence on the past, since new individuals can appear at ancestral positions and influence the future motion of the other apexes.

The rest of the paper is organised as follows. In Section~\ref{Section 2}, we introduce our model, before stating our main results and providing a rigorous construction of our generalised branching process. In Section~\ref{Section 3}, we establish the martingale problem satisfied by the process. In Section~\ref{Section 4}, we take the large population limit of our model and show that the limiting process can be characterised as the solution to a system of partial differential equations. We also show that the limiting measure $\mathcal{Z}_t^\infty$ at any time $t>0$ is absolutely continuous with respect to Lebesgue measure on $\mathbb{R}^2$.

\section{Definition of the spatial birth-death process and statement of the main results}
\label{Section 2}

\subsection{Overview of the model and main results}\label{subs:overview}
The main results presented in this paper are $(i)$ the creation of an original and relevant model for the biological application we consider, $(ii)$ the construction of a well-defined non Markovian stochastic process with the desired dynamics, and finally $(iii)$ the derivation of a large population limit of this process which will open the door to a numerical analysis of its long term behaviour and a comparison with empirical data on fungal growth \cite{Dikec} in a future work.

\begin{rem}
From a mathematical point of view, the process we introduce here can be described as a spatial birth-death process with ``ancestral'' birth events and interactions between individuals. However, we shall use a terminology closer to the application that motivated this work. In particular, we shall talk about lateral branching when referring to a birth event occurring at the location of an ancestor, the ``regular'' reproduction of an individual will be called apical branching, and the movement of each individual will be described as the elongation of its past trajectory (corresponding to a hypha).
\end{rem}

The construction in itself uses a rather classical approach \cite{Champagnat,Fournier}, but the occurrence of lateral branching at a nonlinear rate requires good control of the number of individuals alive at any given time to show that there is no explosion of the total population size in finite time. Likewise, the proof of the large population limit will follow the usual steps (see also \cite{Fournier,Tran-these} and the numerous papers which followed the same line), but tightness of the sequence of processes indexed by $N$ (the order of magnitude of the initial population size, tending to infinity) is more delicate due to lateral branching, which creates new individuals along the history of the population at a rate proportional to the time during which the ancestral line has been existing. It will require a new argument to control the total size and spatial extent of the population over the time interval $[0,t]$, uniformly in $N$ (see Section~\ref{4.1.2}).

Let us first give an informal definition of the model and state the large population convergence result, which do not require the heavy notation that we shall use to rigorously construct the process of interest in Section~\ref{subs:construction}. At any time $t \ge 0$, each individual $u$ alive at time $t$ is encoded by its position $X^u_t \in \mathbb{R}^2$. The population is then represented by its empirical distribution
\begin{equation}\label{state at t}
 \mathcal{Z}_t = \sum_{u \in V_t} \delta_{X^u_t},
\end{equation}
where $V_t$ is the index set of all individuals alive at time $t$. We denote the set of all finite positive measures on $\mathbb{R}^2$ by $\mathcal{M}(\mathbb{R}^2)$, we endow it with the topology of weak convergence and we use the classical notation $\langle \nu,f\rangle:=\int f d\nu$ for the integral of the function $f$ with respect to the measure $\nu$ (when this quantity is well-defined).

The non-Markovian dynamics of the process $(\mathcal{Z}_t)_{t \ge 0}$ are as follows:
\begin{itemize}
    \item[(a)]\textbf{Elongation:} The trajectories of the individuals are solutions to stochastic differential equations with a drift term which depends on all the other trajectories. That is, the positions of the individuals are solutions to equations of the form
\begin{equation*}
\label{eq_X}
    dX^u_t = \sigma dW^u_t + \left[\int_0^t \int_{\mathbb{R}^2} L_{t-s}(X^u_t - x) \mathcal{Z}_s(dx)ds\right]dt,
\end{equation*}
where the $(W^u_t)_{t\ge 0}$ are independent standard Brownian motions and $L:\mathbb{R}_+ \times \mathbb{R}^2 \to \mathbb{R}^2$ is measurable and satisfies
\begin{align}
    & |L_t(x)| \le h_1(t), \ \forall t,x \in \mathbb{R}_+\times\mathbb{R}^2, \label{L_1}\\
    & |L_t(x)-L_t(y)| \le h_2(t) |x-y|, \ \forall t,x,y \in \mathbb{R}_+\times\mathbb{R}^2\times\mathbb{R}^2 \label{L_2},
\end{align}
where $h_1,h_2 : \mathbb{R}_+ \to \mathbb{R}_+$ are integrable, $h_1$ is bounded over any compact interval and $| \cdot |$ is the Euclidean norm on $\mathbb{R}^2$.
     \item[(b)] \textbf{Apical branching:} At any time $t \ge 0$, each individual $u$ alive at time $t$ branches at instantaneous rate $b_1(X^u_t)$, giving birth at time $t$ to a new individual initially located at the position $X^u_t$ of its parent. We suppose that $b_1:\mathbb{R}^2\rightarrow \mathbb{R}_+$ is continuous and bounded by a constant $B_1 > 0$.
    \item[(c)] \textbf{Lateral branching:} At any time $t \ge 0$, each position belonging to the ancestry of at least one individual $u$ (being therefore of the form $X^u_{s}$ with $s \in (0,t)$) branches at instantaneous rate $b_2(X^u_s)$, giving birth to a new individual initially located at $X^u_{s}$. We suppose that $b_2:\mathbb{R}^2\rightarrow \mathbb{R}_+$ is continuous and bounded by a constant $B_2 > 0$.
    \item[(d)] \textbf{Death:} At any time $t \ge 0$, each individual $u$ alive at time $t$ dies at instantaneous rate $d(X^u_t,\mathcal{Z}_t)$. We suppose that $d:\mathbb{R}^2\times \mathcal{M}(\mathbb{R}^2)\rightarrow \mathbb{R}_+$ is continuous and that there exist $C>0$ and a bounded measurable function $\Psi:\mathbb{R}^2\rightarrow \mathbb{R}$ such that for every $x \in \mathbb{R}^2$ and $\nu, \nu' \in \mathcal{M}(\mathbb{R}^2)$, we have
    \begin{align}
        &d(x,\nu) \le C \langle \nu , 1 \rangle, \qquad \qquad \qquad \qquad \qquad  \hbox{and} \label{d_1}\\
        &|d(x,\nu)-d(x,\nu')| \le  |\langle \nu-\nu',\Psi \rangle |. \label{d_2}
    \end{align}
\end{itemize}
\begin{rem}
\label{rem_assumptions}
\noindent $(i)$ Observe that the death rate of a single apex at time $t$ depends on the positions of the other apexes, and not on the full population history up to time~$t$. From the modelling point of view, it may be more relevant to assume dependence on the past (for instance to make the death rate depend on the local density of pieces of filaments, and not only on the local density of apexes), but the notation and the regularity condition imposed on $d$ would become more intricate. Therefore, we have chosen not to go in this direction for now, and to use the set of apexes as a proxy for the current occupation of space. By analogy with anastomosis, where two filaments merge when they cross, the death rate can be chosen of the form $d(x,\nu) = \langle \nu , f(x-.) \rangle$ for some appropriate bounded continuous function $f$, so that filaments stop growing at a rate proportional to the local occupation.

\noindent $(ii)$ Assumptions \eqref{L_1} and \eqref{L_2} and the integrability of $h_1,h_2$ ensure the well-posedness of the system of equations and have been taken from \cite[Chapter~2]{Tomasevic}. The additional assumption that $h_1$ is bounded over any compact interval is used in the coupling argument of the proof of tightness of the sequence of processes in the large population limit stated in Theorem~\ref{th:convergence}.

\noindent $(iii)$ The continuity of the branching and death rates and Assumption~\eqref{d_2} are not necessary for the construction of the process $(\mathcal{Z}_t)_{t\geq 0}$ and the derivation of its semi-martingale decomposition, and are only required for the large population limit. The continuity of $b_1$, $b_2$ and $d$ is needed for the characterisation of the limit and Assumption~\eqref{d_2} is needed in order to prove uniqueness of the limit.
\end{rem}

The process $(\mathcal{Z}_t)_{t\geq 0}$ will be constructed in a recursive way in Section~\ref{subs:construction}, but for now let us state the martingale problem it will satisfy, which will be at the basis of the large population limit derived just after. To this end, let $T>0$ and let $\mathcal{C}_b^{1,2}([0,T]\times \mathbb{R}^2)$ stand for the set of all bounded functions on $[0,T]\times\mathbb{R}^2$ of class $\mathcal{C}^1$ in time and $\mathcal{C}^2$ in space with bounded first and second order derivatives. In Section~\ref{Section 3}, we shall show the following result.
\begin{Prop}\label{prop:martingale pb}
For every $f \in \mathcal{C}_b^{1,2}([0,T]\times \mathbb{R}^2)$, the process
\begin{equation}
\begin{aligned}
\Bigg(\langle \mathcal{Z}_t,f_t \rangle - \langle \mathcal{Z}_0,f_0 \rangle - \int_0^t\int_{\mathbb{R}^2} \Bigg[\frac{\partial f_s}{\partial s}(x) & + \sum\limits_{i=1}^2 \left( \frac{\partial f_s}{\partial x_i}(x) \int_0^s\int_{\mathbb{R}^2} L^i_{s-r}(x-y) \mathcal{Z}_r(dy)dr + \frac{\sigma^2}{2} \frac{\partial^2 f_s}{\partial x_i^2}(x)\right)\\
    & + (b_1(x) + b_2(x)(t-s)-d(x,\mathcal{Z}_s))f_s(x) \Bigg] \mathcal{Z}_s(dx)ds\Bigg)_{0 \le t \le T} \label{martingale}
\end{aligned}
\end{equation}
is a martingale, where for $i\in \{1,2\}$, $L_{s-r}^i$ is the $i$-th coordinate of the function $L_{s-r}$.
\end{Prop}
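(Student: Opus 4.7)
The plan is to apply It\^o's formula to $\langle \mathcal{Z}_s, f_s\rangle$ using the pathwise representation of $(\mathcal{Z}_t)_{t\ge 0}$ to be constructed in Section~\ref{subs:construction}: each apex position $X^u_s$ satisfies an SDE driven by an independent Brownian motion $W^u$, and the three types of jumps (apical branching, death, lateral branching) are triggered by independent Poisson point processes whose intensities depend on the current state and on the past trajectories. Between consecutive jumps, It\^o's formula for $f_s(X^u_s)$ produces the terms $\partial_s f_s(X^u_s)$, the drift contribution $\nabla f_s(X^u_s) \cdot \int_0^s\!\int L_{s-r}(X^u_s - y)\,\mathcal{Z}_r(dy)\,dr$, the second-order correction $\tfrac{\sigma^2}{2}\Delta f_s(X^u_s)$, and a Brownian martingale piece $\sigma \nabla f_s(X^u_s)\cdot dW^u_s$. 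Summing over $u\in V_s$ and using $\sum_{u\in V_s} g(X^u_s) = \langle \mathcal{Z}_s, g\rangle$ rewrites these drift contributions as integrals against $\mathcal{Z}_s$, producing the first three terms inside the bracket in~\eqref{martingale}.

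Next I would compensate the Poisson jumps. Each apical event adds $+f_s(X^u_s)$ to $\langle \mathcal{Z}_s, f_s\rangle$ with predictable intensity $b_1(X^u_s)$, giving after summation the compensator $\int_0^t\int b_1 f_s\,d\mathcal{Z}_s\,ds$; each death contributes $-f_s(X^u_s)$ at rate $d(X^u_s, \mathcal{Z}_s)$, giving $-\int_0^t\int d(\cdot, \mathcal{Z}_s)\,f_s\,d\mathcal{Z}_s\,ds$. The lateral term is the only non-standard piece. At each time $s$, every ``piece of filament'' previously laid down by an apex -- that is, every past position $X^v_r$ of an apex $v$ alive at some earlier time $r<s$, collectively described by the measures $(\mathcal{Z}_r)_{r\le s}$ -- produces lateral births at rate $b_2(X^v_r)$, each such event adding a new individual at the source position. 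The cumulative predictable compensator of the lateral contribution to $\langle\mathcal{Z}_\cdot,f_\cdot\rangle$ on $[0,t]$ therefore equals
\[
\int_0^t\!\int_0^s\!\int_{\mathbb{R}^2} b_2(x)\, f_s(x)\,\mathcal{Z}_r(dx)\,dr\,ds.
\]
Exchanging the order of the $r$ and $s$ integrations by Fubini and computing the resulting inner integral in $s$ yields the form $\int_0^t\!\int b_2(x)(t-s)f_s(x)\,\mathcal{Z}_s(dx)\,ds$ of the lateral term in~\eqref{martingale}.

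The main obstacle will be to upgrade the compensated object from a local to a true martingale. Because the lateral branching rate is the integral over the past $\int_0^s\!\int b_2\,d\mathcal{Z}_r\,dr$ and the death rate scales with $\langle\mathcal{Z}_s,1\rangle$ via~\eqref{d_1}, there is no \emph{a priori} deterministic bound on the population size. I would therefore localise along $\tau_n := \inf\{s\ge 0 : \langle \mathcal{Z}_s,1\rangle \ge n\}$, check that on each stopped interval every integrand in~\eqref{martingale} is dominated by a polynomial in $\langle\mathcal{Z}_s,1\rangle$ (using the boundedness of $b_1$, $b_2$, $f$, $\partial_s f$, $\nabla f$, $\Delta f$ together with~\eqref{L_1} and~\eqref{d_1}), and then conclude by dominated convergence as $n\to\infty$ using the moment bounds on $\sup_{s\le T}\langle\mathcal{Z}_s,1\rangle$ established during the construction in Section~\ref{subs:construction}. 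Once the Brownian and compensated Poisson stochastic integrals have been shown to be square-integrable, the resulting process is a true martingale and~\eqref{martingale} follows.
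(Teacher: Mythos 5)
Your proposal follows essentially the same route as the paper's proof: It\^o's formula applied between jump times and summed over the living individuals (this is the content of Proposition~\ref{Prop_eq_Z}), compensation of the Poisson jumps with the lateral compensator rewritten by Fubini into the $(t-s)$ form, and an integrability argument to upgrade the compensated local martingale to a true martingale. The one ingredient you leave implicit is that the death term, being bounded by $C\langle\mathcal{Z}_s,1\rangle^2$ via \eqref{d_1}, requires the second-moment estimate $\mathbb{E}\big[\sup_{s\le t}\langle\mathcal{Z}_s,1\rangle^2\big]<\infty$ of Lemma~\ref{Lem_ineq_Z2} (proved by a separate Gronwall argument from $\mathbb{E}[\langle\mathcal{Z}_0,1\rangle^2]<\infty$) rather than only the first-moment bound \eqref{ineq_Z} established during the construction; with that bound in hand, your localisation along $\tau_n$ yields exactly the paper's conclusion.
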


In Section~\ref{Section 4}, we shall take the large population limit of the process $\mathcal{Z}$. Suppose that for every integer $N$, the process $\mathcal{Z}^{(N)}$ starts with a population of size of the order of $N$ and that the interaction kernel $L^N$ and death rate $d^N$ are of the form
\begin{equation}\label{normalised L d}
L^N_t(x) = \frac{1}{N}\, L_t(x) \quad \hbox{and}\quad  d^N(x,\nu) = d\Big(x,\frac{\nu}{N}\Big),
\end{equation}
where the functions $L$ and $d$ are independent of $N$ and satisfy the assumptions \eqref{L_1}, \eqref{L_2}, \eqref{d_1} and \eqref{d_2} stated above. Consider for every $N$ and every time $t\geq 0$ the renormalised measure
\begin{equation}\label{normalised Z}
\mathcal{Z}^N_t = \frac{1}{N} \,\mathcal{Z}^{(N)}_t.
\end{equation}
As the process $\mathcal{Z}^{(N)}$ satisfies the martingale problem stated in Proposition \ref{prop:martingale pb} with interaction kernel $L^N$ and death rate $d^N$, we obtain that for every $f \in \mathcal{C}_b^{1,2}([0,T]\times \mathbb{R}^2)$, the process
\begin{equation*}
\begin{aligned}
\Bigg(\langle \mathcal{Z}^N_t,f_t \rangle - \langle \mathcal{Z}^N_0,f_0 \rangle - \int_0^t\int_{\mathbb{R}^2} \Bigg[\frac{\partial f_s}{\partial s}(x) & + \sum\limits_{i=1}^2 \left( \frac{\partial f_s}{\partial x_i}(x) \int_0^s\int_{\mathbb{R}^2} L^i_{s-r}(x-y) \mathcal{Z}^N_r(dy)dr + \frac{\sigma^2}{2} \frac{\partial^2 f_s}{\partial x_i^2}(x)\right)\\
    & + (b_1(x) + b_2(x)(t-s)-d(x,\mathcal{Z}^N_s))f_s(x) \Bigg] \mathcal{Z}^N_s(dx)ds\Bigg)_{0 \le t \le T} 
\end{aligned}
\end{equation*}
is a martingale. Let $D_{\mathcal{M}(\mathbb{R}^2)}[0,\infty)$ be the set of all c\`adl\`ag paths with values in $\mathcal{M}(\mathbb{R}^2)$, which we endow with the standard Skorokhod topology. We shall show the following result.
\begin{Thm}\label{th:convergence}
Suppose that $(\mathcal{Z}^N)_{N\in \mathbb{N}}$ is constructed using an interaction kernel $L^{N}$ and a death rate $d^{N}$ as in \eqref{normalised L d}, where $L$ and $d$ satisfy \eqref{L_1}, \eqref{L_2}, \eqref{d_1} and \eqref{d_2} and that $(\mathcal{Z}_0^N)_{N\in \mathbb{N}}$ converges in law to a deterministic measure $\mathcal{Z}_0\in \mathcal{M}(\mathbb{R}^2)$ in such a way that $\sup_{N\geq 1} \mathbb{E}[\langle \mathcal{Z}_0^N,1 \rangle^2] < + \infty$. Then, the sequence of processes $(\mathcal{Z}^N)_{N \ge 1}$ converges in law in $D_{\mathcal{M}(\mathbb{R}^2)}[0,\infty)$ to a continuous deterministic process $\mathcal{Z}^\infty$ satisfying: $\mathcal{Z}_0^\infty=\mathcal{Z}_0$ and for every $f \in \mathcal{C}^2_b(\mathbb{R}^2)$ and $t \ge 0$,
\begin{equation}
\begin{aligned}
\langle \mathcal{Z}_t^\infty,f \rangle = \langle \mathcal{Z}_0^\infty,f \rangle + \int_0^t\int_{\mathbb{R}^2} &\Bigg[\sum\limits_{i=1}^2 \left(\frac{\partial f}{\partial x_i}(x) \int_0^s\int_{\mathbb{R}^2} L^i_{s-r}(x-y) \mathcal{Z}_r^\infty(dy)dr + \frac{\sigma^2}{2} \frac{\partial^2 f}{\partial x_i^2}(x)\right)\\ 
& \qquad + \big(b_1(x) + b_2(x)(t-s) -d(x,\mathcal{Z}_s^\infty)\big)f(x) \Bigg] \mathcal{Z}_s^\infty(dx)ds. \label{eq_Zlim}
\end{aligned}
\end{equation}
Furthermore, for every $t > 0$, $\mathcal{Z}_t^\infty$ is absolutely continuous with respect to Lebesgue measure on $\mathbb{R}^2$, and its density $\rho_t$ is a weak solution to: for every $t \ge 0$ 
\begin{equation}
\begin{aligned}
    &\frac{\partial \rho_t}{\partial t}(x) + \nabla_x \rho_t(x) \cdot \left( \int_0^t \int_{\mathbb{R}^2} L_{t-s}(x-y)\rho_s(y)dyds\right) + \rho_t(x) \int_0^t \int_{\mathbb{R}^2} \left(\nabla_x \cdot L_{t-s}(x-y)\right) \rho_s(y)dyds \\
   &= \frac{\sigma^2}{2} \Delta_x \rho_t(x) + (b_1(x)-d(x,\rho_t))\rho_t(x) + b_2(x)\int_0^t \rho_s(x)ds.
\end{aligned}
\end{equation}
\end{Thm}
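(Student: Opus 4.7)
The plan is to follow the now-classical strategy for deriving large population limits of measure-valued branching processes, in the spirit of \cite{Fournier, Tran-these}, which decomposes into four steps: (i) uniform moment bounds on the total mass of $\mathcal{Z}^N$, (ii) tightness of the sequence in $D_{\mathcal{M}(\mathbb{R}^2)}[0,\infty)$, (iii) identification of any limit point as a solution of \eqref{eq_Zlim}, and (iv) uniqueness of this solution together with absolute continuity of $\mathcal{Z}^\infty_t$ for $t>0$.

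The main obstacle, already flagged in Section~\ref{subs:overview}, is step~(i). Applying Proposition~\ref{prop:martingale pb} to $\mathcal{Z}^N$ with $f\equiv 1$ and using the scalings \eqref{normalised L d}--\eqref{normalised Z}, the total mass $\langle\mathcal{Z}^N_t,1\rangle$ decomposes as an initial term, a square-integrable martingale, and a drift controlled by $\int_0^t (B_1+B_2(t-s))\langle\mathcal{Z}^N_s,1\rangle\,ds$. The factor $(t-s)$ coming from lateral branching prevents a direct application of the usual Gronwall inequality, but iterating the estimate (or invoking a Gronwall lemma with a time-dependent kernel), together with Doob's inequality for the martingale part and the second moment assumption on the initial condition, yields $\sup_N\mathbb{E}[\sup_{t\le T}\langle\mathcal{Z}^N_t,1\rangle^2]<+\infty$ for every $T>0$. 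With this at hand, tightness is obtained via the M\'el\'eard--Roelly criterion: for each $f\in\mathcal{C}^2_b(\mathbb{R}^2)$, the semimartingale decomposition of $\langle\mathcal{Z}^N_\cdot,f\rangle$ satisfies the Aldous--Rebolledo condition, with finite variation part controlled by step~(i) together with \eqref{L_1} and \eqref{d_1}, and martingale part whose quadratic variation is of order $1/N$.

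To identify the limit, I would extract an almost surely convergent subsequence via Skorokhod's representation and pass to the limit in the decomposition: the martingale vanishes in $L^2$ thanks to the $1/N$ scaling, while the drift terms converge by dominated convergence, the double time integrals being controlled by the uniform moment bounds, and the nonlinear dependencies in $\mathcal{Z}^N_s$ handled by the continuity provided by \eqref{L_2} and \eqref{d_2}. Any limit point $\mathcal{Z}^\infty$ therefore satisfies \eqref{eq_Zlim}. For uniqueness, a Gronwall argument is carried out on a suitable weak distance (e.g.\ the dual bounded Lipschitz distance) between two candidate solutions, using \eqref{L_2} and the boundedness of $b_1,b_2$ for the linear contributions and exploiting \eqref{d_2} to handle the nonlinear death term; this is precisely where assumption \eqref{d_2} enters, as anticipated in Remark~\ref{rem_assumptions}(iii). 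Uniqueness then upgrades the convergence along a subsequence to convergence of the full sequence.

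Finally, to prove absolute continuity of $\mathcal{Z}^\infty_t$ and derive the weak PDE, I would rewrite \eqref{eq_Zlim} in mild form via Duhamel's formula with respect to the heat semigroup $(P^\sigma_t)_{t\ge 0}$ generated by $\tfrac{\sigma^2}{2}\Delta$: heuristically, $\mathcal{Z}^\infty_t = P^\sigma_t\,\mathcal{Z}^\infty_0 + \int_0^t P^\sigma_{t-s}\bigl(\mathcal{S}_s\bigr)\,ds$, where $\mathcal{S}_s$ is a signed measure collecting the source and transport terms (the latter being rewritten, after integration by parts, as a divergence applied to the heat kernel), with total variation controlled uniformly on compacts by step~(i) and \eqref{L_1}, \eqref{d_1}. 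Since $P^\sigma_{t-s}$ (respectively $\nabla P^\sigma_{t-s}$) applied to a finite signed measure produces a smooth function whenever $s<t$, both summands are absolutely continuous for every $t>0$, giving $\mathcal{Z}^\infty_t(dx)=\rho_t(x)\,dx$. Plugging this back into \eqref{eq_Zlim} with $f\in\mathcal{C}^\infty_c(\mathbb{R}^2)$ and integrating by parts in the transport and Laplacian terms recovers the weak PDE formulation stated in the theorem.
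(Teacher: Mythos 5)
Your outline reproduces the standard scheme and most of it matches the paper (uniform second-moment bounds on the total mass via a Gronwall lemma with the time-dependent kernel $B_1+B_2(t-s)$, Aldous--Rebolledo for each $\langle\mathcal{Z}^N_\cdot,f\rangle$, identification of limit points through the vanishing of the $O(1/N)$ martingale, uniqueness, and a Duhamel/heat-kernel argument for absolute continuity). However, there is a genuine gap at the tightness step. Controlling $\langle\mathcal{Z}^N_\cdot,f\rangle$ for $f\in\mathcal{C}^2_b(\mathbb{R}^2)$ together with the total mass only yields tightness for the \emph{vague} topology (equivalently, in $D_{\mathcal{M}(\hat{\mathbb{R}}^2)}[0,\infty)$ on the one-point compactification); to upgrade to the weak topology on $\mathcal{M}(\mathbb{R}^2)$ one must show that no mass escapes to infinity, i.e.\ that $\limsup_N\mathbb{P}(\langle\mathcal{Z}^N_t,\mathds{1}_{\{|x|>R\}}\rangle>\varepsilon)$ is small for $R$ large. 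This is precisely the point the paper flags as the new difficulty created by lateral branching, and it occupies all of Section~\ref{4.1.2}: since the drift of each apex depends on the whole population history and new individuals are created at ancestral positions, one cannot directly bound particle positions. The paper's Proposition~\ref{prop: mass escaping} requires a careful coupling with an auxiliary process having $L\equiv 0$ and $d\equiv 0$ (first for constant branching rates, then a thinning comparison for bounded rates), restricting to the event $\{\sup_{s\le t}\langle\mathcal{Z}^N_s,1\rangle\le A\}$ so that the accumulated drift of every trajectory is bounded by $C_tA$, and then using Brownian tail estimates $\widetilde{\psi}_{R,A}$ integrated against $\mathcal{Z}^N_0$. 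Your proposal does not address this at all, and without it the claimed convergence in $D_{\mathcal{M}(\mathbb{R}^2)}[0,\infty)$ does not follow.

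A secondary issue concerns uniqueness. A Gronwall argument on the dual bounded-Lipschitz distance applied directly to \eqref{eq_Zlim} does not close, because the diffusion term $\tfrac{\sigma^2}{2}\Delta f$ cannot be bounded by the (bounded-)Lipschitz norm of the test function $f$. One must first pass to the mild formulation, testing against $f_s(x)=\int\varphi(y)g_{t-s}(x-y)\,dy$ so that $\partial_s f_s+\tfrac{\sigma^2}{2}\Delta f_s=0$; the price is that $|\nabla_x f_s|$ blows up like $(t-s)^{-1/2}$, so the resulting integral inequality has a weakly singular kernel and requires a singular Gronwall lemma (the paper invokes Theorem~3.3.1 of \cite{Amann}) rather than the classical one. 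You use the mild form for the absolute-continuity step but not where it is actually indispensable; as written, your uniqueness sketch would fail at the Laplacian term.
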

The content of Theorem~\ref{th:convergence} is not very surprising in view of Proposition~\ref{prop:martingale pb} and our scaling assumptions on the different parameters. However, the occurrence of lateral branching, combined with the increase in the total length of the ancestry over a finite time interval as $N$ tends infinity (since the number of individuals in the population increases), generates new difficulties that we have to overcome to prove the results presented here (see Section~\ref{4.1.2}).

\subsection{Construction of the process}\label{subs:construction}

Let us now construct the process $(\mathcal{Z}_t)_{t \ge 0}$. To this end, we follow the algorithmic method introduced in \cite{Champagnat,Fournier}. Let $M$ be a Poisson random measure on $\mathbb{R}_+ \times \mathbb{N} \times \mathbb{R}_+ \times (0,1)$ with intensity measure $ds \otimes n(du) \otimes dz \otimes d\theta$, where $n(du)$ denotes the counting measure on $\mathbb{N}$. Let $((W^u_t)_{t \ge 0}, u \in \mathbb{N})$ be independent standard Brownian motions which are also independent of $M$ and let $\sigma>0$.

Let $\mathcal{Z}_0$ be a random finite counting measure such that $\mathbb{E}\left[ \langle \mathcal{Z}_0 , 1 \rangle ^2 \right] < +\infty$. We set $V_0 = \{1,\dots,I_0\}$, where $I_0$ denotes the number of atoms in $\mathcal{Z}_0$, and write $\mathcal{Z}_0 = \sum_{u \in V_0} \delta_{X^u_0}$. We define a first process $(\mathcal{Z}^0_t)_{t\geq 0}$ by $\mathcal{Z}^0_t = \mathcal{Z}_0$ for all $t\geq 0$.

Let $T_0 = 0$ and let $\{X^u_t, t \ge 0,  u \in V_0 \}$ satisfy the system of coupled equations
\begin{equation*}
dX^u_t = \sigma dW^u_t + \left[\sum\limits_{v \in V_0} \int_0^t L_{t-s}(X^u_t - X^v_s)ds\right]dt,
\end{equation*}
starting from $X^u_0$ for every $u\in V_0$. Under Assumption~\eqref{L_1}, the drift term in each equation is bounded by an integrable function, which ensures existence of a weak solution to the system (see Proposition~5.3.6 in~\cite{KS}). Using Assumption~\eqref{L_2}, we can show pathwise uniqueness of the solution (see Theorem~2.2.4 in~\cite{Tomasevic}). Weak existence combined with pathwise uniqueness then gives us strong existence and uniqueness of the solution to the system (see Corollary~1 in~\cite{Yamada-Watanabe}).

We now recursively construct the times $T_k$ of branching or death events, the sets $V_{T_k}$ of indices of individuals alive between times $T_{k}$ and $T_{k+1}$ and the processes $(\mathcal{Z}^k_t, (X^u_{T_k+t},t\ge 0,u\in V_{T_k}))$, where $(\mathcal{Z}^k_t)_{t \ge 0}$ is the process stopped at time $T_k$ and $(X^u_{T_k + t},t\ge 0,u\in V_{T_k})$ are the trajectories followed by the individuals alive at time $T_k$ in the absence of branching or death events.

Suppose that for some $k\ge 0$ we have constructed $T_k$, $V_{T_k}$ and $(\mathcal{Z}^k_t, (X^u_{T_k +t},t\ge 0,u\in V_{T_k}))$. Let
$$
\overline{\mathcal{Z}}^k_s = \displaystyle\sum\limits_{u \in V_{T_k}} \delta_{X^u_s},\qquad s\geq T_k,
$$
be the process after time $T_k$ in the absence of branching or death events (that is, individuals only move in space), and set
\begin{align*}
T_{k+1} = \inf  \Bigg\{t > T_k :  & \int_{(T_k,t]\times \mathbb{N}\times \mathbb{R}_+\times (0,1)}  \Big( \mathds{1}_{\{u \in V_{s-},a \le b_1(X^u_s)+d(X^u_s,\overline{\mathcal{Z}}^k_s) \}}\\
& \quad + \mathds{1}_{\{u \in V_{\theta s}, b_1(X^u_s)+d(X^u_s,\overline{\mathcal{Z}}^k_s) < a \le b_1(X^u_s)+b_2(X^u_{\theta s})s+d(X^u_s,\overline{\mathcal{Z}}^k_s) \}}\Big) M(ds,du,da,d\theta) > 0 \Bigg\}
\end{align*}
to be the first time at which a branching or death event occurs after time $T_k$.

\begin{rem}
To lighten the notation, the same Poisson point process is used for the apical branching, lateral branching and death mechanisms. To this end, instead of choosing $s$ uniformly in $(0,t)$ and having lateral branching occur at position $X_s$ at rate $b_2(X_s)$, as is suggested by the description of the model, we choose $\theta$ uniformly in $(0,1)$ and have lateral branching occur at position $X_{\theta t}$ at rate $b_2(X^u_{\theta t})t$. This alternative encoding gives rise to the same dynamics as the one informally described earlier.
\end{rem}

Let $(U_{k+1}, A_{k+1}, \theta_{k+1})$ be the unique triplet of variables such that $M(\{(T_{k+1},U_{k+1},A_{k+1},\theta_{k+1})\}) = 1$. If $A_{k+1} \le b_1(X^{U_{k+1}}_{T_{k+1}})$, individual $U_{k+1}$ branches apically, giving birth, at time $T_{k+1}$, to an individual labelled by $I_0 + k+1$ with initial position $X^{I_0 + k+1}_{T_{k+1}} = X^{U_{k+1}}_{T_{k+1}-}$. If $b_1(X^{U_{k+1}}_{T_{k+1}}) < A_{k+1} \le b_1(X^{U_{k+1}}_{T_{k+1}})+d(X^{U_{k+1}}_{T_{k+1}},\overline{\mathcal{Z}}^k_{T_{k+1}})$, individual $U_{k+1}$ dies at time $T_{k+1}$, that is, $U_{k+1}$ is removed from $V_{T_k}$. Finally, if $b_1(X^{U_{k+1}}_{T_{k+1}})+d(X^{U_{k+1}}_{T_{k+1}},\overline{\mathcal{Z}}^k_{T_{k+1}}) < A_{k+1} \le b_1(X^{U_{k+1}}_{T_{k+1}})+b_2(X^{U_{k+1}}_{\theta_{k+1}T_{k+1}}) T_{k+1}+d(X^{U_{k+1}}_{T_{k+1}},\overline{\mathcal{Z}}^k_{T_{k+1}})$, individual $U_{k+1}$ branches laterally, giving birth, at time $T_{k+1}$, to an individual with label $I_0 + k+1$ and initially located at $X^{I_0 + k+1}_{T_{k+1}} = X^{U_{k+1}}_{\theta_{k+1} T_{k+1}}$. Thus, we set
\begin{equation*}
    V_{T_{k+1}} =
    \begin{cases}
    V_{T_k} \cup \{I_0 + k + 1\}, \text{ if individual $U_{k+1}$ branches at time $T_{k+1}$},\\
    V_{T_k} \backslash \{U_{k+1}\}, \text{ if individual $U_{k+1}$ dies at time $T_{k+1}$}.
    \end{cases}
\end{equation*}
We then define the process $\mathcal{Z}^{k+1}$ as follows:
\begin{equation*}
    \mathcal{Z}^{k+1}_t =
    \begin{cases}
    \mathcal{Z}^{k}_t \text{ for } t < T_k,\\[2mm]
    \sum\limits_{u \in V_{T_k}} \delta_{X^u_t} \text{ for } T_k \le t < T_{k+1},\\[2mm]
    \sum\limits_{u \in V_{T_{k+1}}} \delta_{X^u_{T_{k+1}}} \text{ for } t \ge T_{k+1}.
    \end{cases}
\end{equation*}

As regards the motion of the (at most $I_0 + k+1$) individuals in $V_{T_{k+1}}$, let $(X^u_{T_{k+1}+t},t\ge 0,u\in V_{T_{k+1}})$ satisfy the system of coupled equations
\begin{equation*}
   \begin{aligned}
    dX^u_{T_{k+1}+t} = \sigma dW^u_{T_{k+1}+t} + \Bigg[&\int_0^{T_{k+1}} \int_{\mathbb{R}^2} L_{T_{k+1}+t-s}(X^u_{T_{k+1}+t}-x)\mathcal{Z}^{k+1}_s(dx)ds \\ & \quad + \sum\limits_{v \in V_{T_{k+1}}}\int_0^t L_{t-s}(X^u_{T_{k+1}+t}-X^v_{T_{k+1}+s})ds\Bigg] dt,
\end{aligned}
\end{equation*}
starting at $t = 0$ at the positions $X^u_{T_{k+1}}$. In the above equation, the drift term has been split into two terms, the first corresponding to interactions with the past of the process up to time $T_{k+1}$ and the second corresponding to the interactions between the individuals in $V_{T_{k+1}}$ from time $T_{k+1}$ on. This split is necessary as the process $\mathcal{Z}$ is not defined after time $T_{k+1}$ at this stage. Again, Assumptions~\eqref{L_1} and \eqref{L_2} ensure strong existence and uniqueness of the solution to this system.
This concludes the construction of the times $T_k$, sets $V_{T_k}$ and the processes $(\mathcal{Z}^k_t, (X^u_t,t\ge T_k,u\in V_{T_k}))$. To finish our construction, we now need to show the following result.

\begin{Prop}
\label{Prop_Tk}
For every $t \ge 0$, the number of branching and death events in the time interval $[0,t]$ is a.s. finite. That is to say, $\sup_{k \ge 0} \ T_k = +\infty$ a.s.
\end{Prop}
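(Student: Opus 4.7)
The plan is to bound the total population size $N_t := \langle \mathcal{Z}_t,1\rangle$ on compact time intervals, from which non-explosion of the event-time sequence $(T_k)$ follows. The main difficulty, compared to classical constructions of birth-death processes with bounded rates \cite{Champagnat,Fournier}, is that the total lateral branching rate borne by an individual alive at time $s$ equals $\int_0^s b_2(X^u_r)\,dr$ and is therefore bounded only by $B_2 s$, which grows without bound. This linear growth in $s$ is however mild enough to be absorbed by a Gronwall argument.

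Concretely, I would introduce the stopping times $\tau_M := \inf\{t\ge 0 : N_t \ge M\}$ for $M\ge 1$, so that $N_{t\wedge \tau_M}\le M$ is bounded. Writing $N$ as a jump process driven by the Poisson random measure $M$ and applying the compensation formula to $N_{\cdot\wedge\tau_M}$, one bounds the apical contribution of the rate by $B_1$, the lateral one by $B_2 s$, and discards the non-positive contribution from the death term, which yields
\begin{equation*}
\mathbb{E}\bigl[N_{t\wedge\tau_M}\bigr] \;\le\; \mathbb{E}[N_0] + \int_0^t (B_1 + B_2 s)\,\mathbb{E}\bigl[N_{s\wedge\tau_M}\bigr]\,ds.
\end{equation*}
Since $\mathbb{E}[N_0]<\infty$ by the assumed finiteness of $\mathbb{E}[\langle \mathcal{Z}_0,1\rangle^2]$, Gronwall's lemma gives $\mathbb{E}[N_{t\wedge\tau_M}]\le \mathbb{E}[N_0]\exp(B_1 t + B_2 t^2/2) =: C(t)$, a bound uniform in $M$.

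Using that $N$ has jumps of size $\pm 1$ and that $N_{\tau_M}=M$ on $\{\tau_M<\infty\}$, Markov's inequality yields $M\,\mathbb{P}(\tau_M \le t)\le \mathbb{E}[N_{t\wedge\tau_M}]\le C(t)$, so $\mathbb{P}(\tau_M\le t)\to 0$ as $M\to\infty$. The sequence $(\tau_M)_M$ being non-decreasing, this gives $\tau_\infty := \lim_M \tau_M = +\infty$ almost surely. To conclude $\sup_k T_k = +\infty$ a.s., I would argue by contradiction: on the event $\{\sup_k T_k < \infty\}$, the population size $N$ cannot remain bounded on $[0,\sup_k T_k)$, for otherwise the total event rate would be bounded there, contradicting the accumulation of infinitely many jump times by time $\sup_k T_k$. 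Hence $\tau_M < \sup_k T_k$ for all $M$ large enough, which forces $\tau_\infty \le \sup_k T_k < \infty$, contradicting $\tau_\infty = +\infty$ a.s.

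The main obstacle is really the time-dependence of the lateral branching rate: once one accepts the exponential-in-$t^2$ bound from Gronwall's estimate, everything else is classical. The assumptions on $L$ play no role here and are only needed to guarantee existence and uniqueness of the inter-event stochastic differential equations defining the $(X^u_t)$, and the precise form of $d$ is irrelevant since it enters the compensator with a favourable sign.
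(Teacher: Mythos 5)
Your proposal is correct and its core coincides with the paper's proof: localise with stopping times based on the total mass (the paper uses $\tau_j=\inf\{t>0:\langle\mathcal{Z}_t,1\rangle=j\}$), apply the compensation formula, drop the nonpositive death contribution, and absorb the time-growing lateral rate into a Gronwall bound of the form $e^{B_1t+\frac{B_2}{2}t^2}$. Two points are worth flagging. First, a bookkeeping issue in your displayed inequality: the \emph{total} lateral branching rate at time $s$ is $B_2\int_0^s\langle\mathcal{Z}_r,1\rangle\,dr$, which is not bounded by $B_2 s\,\langle\mathcal{Z}_s,1\rangle$ because the population is not monotone (deaths); after Fubini the correct Gronwall kernel is $B_1+B_2(t-s)$ rather than $B_1+B_2 s$, as in the paper. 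Both kernels integrate to $B_1t+\frac{B_2}{2}t^2$, so your final bound $C(t)$ survives, but the intermediate inequality should be corrected. Second, your concluding step genuinely differs from the paper's: you deduce $\sup_k T_k=+\infty$ by contradiction, invoking the fact that a jump process whose total jump intensity is bounded on an event cannot accumulate infinitely many jumps there — note that this step \emph{does} use the bound \eqref{d_1} on the death rate (to bound the total death intensity by $CM^2$ when the population is at most $M$), so your closing remark that the form of $d$ is irrelevant is slightly overstated. The paper instead makes this step explicit and quantitative: it bounds the expected number of branching events $\mathbb{E}[N_t]$ using the first-moment bound on $\langle\mathcal{Z}_s,1\rangle$ already obtained, controls the number of deaths via $D_t\le\langle\mathcal{Z}_0,1\rangle+N_t$, and concludes from $\mathbb{E}[N_t+D_t]<\infty$ that the number of events in $[0,t]$ is a.s.\ finite. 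Both routes are valid; the paper's avoids the (correct but unproved in your sketch) ``bounded intensity implies no accumulation'' lemma, and the resulting moment bound \eqref{ineq_N} on the number of events is reused later in the martingale-problem section, so it is not wasted effort.
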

 
Indeed, once this property is established, we can define $\mathcal{Z}$ to be the process satisfying for all $k \in \mathbb{N}$ and $t \ge 0$,
\begin{equation*}
    \mathcal{Z}_{t \wedge T_k} = \mathcal{Z}^k_t,
\end{equation*}
and the fact that $\sup_{k \ge 0} \ T_k = +\infty$ a.s. ensures that the process is well defined at all times. We start by showing the following result.
\begin{Prop}
\label{Prop_eq_Z}
For every $f \in \mathcal{C}_b^2(\mathbb{R}^2)$, $k \ge 1$ and $t \in [0,T_{k})$, we have
\begin{align}
\langle \mathcal{Z}_t,f \rangle =& \langle \mathcal{Z}_0,f \rangle + \int_0^t\int_{\mathbb{R}^2} \left[\sum\limits_{i=1}^2 \left(\frac{\partial f}{\partial x_i}(x) \int_0^s\int_{\mathbb{R}^2} L^i_{s-r}(x-y) \mathcal{Z}_r(dy)dr + \frac{\sigma^2}{2} \frac{\partial^2 f}{\partial x_i^2}(x)\right) \right] \mathcal{Z}_s(dx)ds + M^f_{0,t} \nonumber\\
& +\int_{[0,t] \times \mathbb{N} \times \mathbb{R}_+ \times (0,1)} \Big(\left(\mathds{1}_{\{u\in V_{s-},a \le b_1(X^u_{s-})\}}-\mathds{1}_{\{u\in V_{s-},b_1(X^u_{s-}) < a \le b_1(X^u_{s-})+d(X^u_{s-},\mathcal{Z}_{s-})\}}\right) f(X^u_{s-})\nonumber \\
& \qquad +\mathds{1}_{\{u\in V_{\theta s},b_1(X^u_{s-})+d(X^u_{s-},\mathcal{Z}_{s-}) < a \le b_1(X^u_{s-})+b_2(X^u_{\theta s})s+d(X^u_{s-},\mathcal{Z}_{s-})\}} f(X^u_{\theta s}) \Big) M(ds,du,da,d\theta), \label{eq_Z}
\end{align}
where we have written $L = (L^1,L^2)$, and $M_{s,t}^f$ is defined for every $0\leq s\leq t$ by
\begin{equation}
\label{Mf}
M_{s,t}^f = \sum_{k \ge 1} \mathds{1}_{\{s \le T_{k-1} \le t\}} \sum_{u \in V_{T_{k-1}}} M_{s \vee T_{k-1},t\wedge T_k-}^{f,u},
\end{equation}
where for every $k \ge 1$, and $T_{k-1} \le s \le t < T_k$ 
\begin{equation}
\label{Mfu}
M_{s,t}^{f,u} =   f(X^u_t) - f(X^u_{s}) -  \int_{s}^t  \sum\limits_{i=1}^2 \left(\frac{\partial f}{\partial x_i}(X^u_r) \int_0^r \int_{\mathbb{R}^2}L^i_{r-r'}(X^u_r - y) \mathcal{Z}_{r'}(dy)dr' + \frac{\sigma^2}{2}\frac{\partial^2 f}{\partial x_i^2}(X^u_r)\right)dr.
\end{equation}
\end{Prop}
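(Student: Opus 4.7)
The plan is to prove (\ref{eq_Z}) by induction on $k\ge 1$, using the recursive construction of $(\mathcal{Z}_t)$ as a juxtaposition of pure SDE flows between the consecutive event times $(T_j)_{j\le k}$, glued together by a single jump at each $T_j$.

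For the base case $k=1$, $t\in[0,T_1)$, the set $V_r=V_0$ is frozen and $\mathcal{Z}_r=\sum_{u\in V_0}\delta_{X^u_r}$. I apply It\^o's formula to $f(X^u_r)$ using the first SDE of the construction; by (\ref{Mfu}), this reads
\begin{equation*}
f(X^u_t)-f(X^u_0)=\int_0^t\sum_{i=1}^2\Bigl(\frac{\partial f}{\partial x_i}(X^u_r)\int_0^r\int_{\mathbb{R}^2}L^i_{r-r'}(X^u_r-y)\mathcal{Z}_{r'}(dy)dr'+\frac{\sigma^2}{2}\frac{\partial^2 f}{\partial x_i^2}(X^u_r)\Bigr)dr+M^{f,u}_{0,t}.
\end{equation*}
Summing over $u\in V_0$ reproduces the drift and Laplacian integrals against $\mathcal{Z}_s(dx)ds$ on the right-hand side of (\ref{eq_Z}), together with the martingale $M^f_{0,t}$ through (\ref{Mf}). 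The Poisson integral in (\ref{eq_Z}) vanishes on this interval because, by minimality of $T_1$, no atom of $M$ on $(0,t]$ satisfies any of the three indicator conditions.

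For the inductive step, I assume (\ref{eq_Z}) holds at rank $k$ and consider $t\in[T_k,T_{k+1})$. I split $\langle\mathcal{Z}_t,f\rangle-\langle\mathcal{Z}_{T_k-},f\rangle$ into the jump at $T_k$ and the continuous piece on $(T_k,t]$. Case analysis on the value of $A_k$ in the construction shows that $\langle\mathcal{Z}_{T_k},f\rangle-\langle\mathcal{Z}_{T_k-},f\rangle$ equals $+f(X^{U_k}_{T_k-})$ under apical birth, $-f(X^{U_k}_{T_k-})$ under death and $+f(X^{U_k}_{\theta_k T_k})$ under lateral birth, which is exactly the value of the integrand of the Poisson integral in (\ref{eq_Z}) at the unique atom $(T_k,U_k,A_k,\theta_k)$ triggering the event. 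For the continuous piece, I apply It\^o's formula to each $f(X^u_r)$, $u\in V_{T_k}$, on $[T_k,t]$. Since $V_r=V_{T_k}$ and $\mathcal{Z}_r=\sum_{v\in V_{T_k}}\delta_{X^v_r}$ on $[T_k,T_{k+1})$, the two pieces of drift in the SDE of the construction (interaction with $\mathcal{Z}_s$ for $s\le T_k$ and pairwise interaction between members of $V_{T_k}$ for $s\in(T_k,r]$) recombine into the single integral $\int_0^r\int_{\mathbb{R}^2}L_{r-s}(X^u_r-y)\mathcal{Z}_s(dy)ds$ appearing in (\ref{eq_Z}). Summing over $u\in V_{T_k}$, adding the jump contribution and applying the induction hypothesis at time $T_k-$ yield (\ref{eq_Z}) at rank $k+1$.

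The only delicate point is this drift recombination: the construction deliberately splits past from present interactions, whereas (\ref{eq_Z}) features a single integral against $\mathcal{Z}_s$ over the whole interval $[0,r]$, and one needs the definition of $\mathcal{Z}$ on $[T_k,T_{k+1})$ as the empirical measure of $(X^v)_{v\in V_{T_k}}$ to collapse them. Everything else is routine bookkeeping of It\^o's formula and an indicator-by-indicator enumeration of the three event types encoded in the Poisson measure $M$.
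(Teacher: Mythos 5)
Your proposal is correct and follows essentially the same route as the paper's proof: induction on the event times $T_k$, It\^o's formula applied to each $f(X^u_\cdot)$ between consecutive events (with the recombination of the two drift pieces into a single integral against $\mathcal{Z}_s$ over $[0,r]$), and a case analysis on the event type at $T_k$ matching the jump of $\langle\mathcal{Z}_\cdot,f\rangle$ to the integrand of the Poisson integral at the atom $(T_k,U_k,A_k,\theta_k)$. The only (immaterial) difference is bookkeeping: you anchor the induction hypothesis at $T_k-$ via a left limit, whereas the paper anchors it at $T_{k-1}$ and re-expands the evolution over $[T_{k-1},T_k)$ explicitly.
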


\begin{proof}[Proof of Proposition \ref{Prop_eq_Z}]
We show by induction on $k \ge 1$ that for every $t \in [T_{k-1},T_{k})$, $(\mathcal{Z}_t)_{t \ge 0}$ satisfies \eqref{eq_Z}. Let $k = 1$ and $t\in [T_0,T_{1})$. Since, by definition, no branching or death event occur during the interval of time $[0,t]$, we have
\begin{align*}
    &\int_{[0,t]\times \mathbb{N} \times \mathbb{R}_+ \times (0,1)} \Big( \left(\mathds{1}_{\{u\in V_{s-},a \le b_1(X^u_{s-})\}}-\mathds{1}_{\{u\in V_{s-},b_1(X^u_{s-}) < a \le b_1(X^u_{s-})+d(X^u_s,\mathcal{Z}_{s-})\}}\right) f(X^u_{s-})\\
     & \qquad + \mathds{1}_{\{u\in V_{\theta s},b_1(X^u_{s-})+d(X^u_s,\mathcal{Z}_{s-}) < a \le b_1(X^u_{s-})+b_2(X^u_{\theta s})s+d(X^u_{s-},\mathcal{Z}_{s-})\}} f(X^u_{\theta s}) \Big) M(ds,du,da,d\theta) = 0,
\end{align*}
and the result holds by definition of $M_{0,t}^f$.

Let $k \ge 1$, and suppose that for every $t \in [0,T_{k})$, $(\mathcal{Z}_t)_{t \ge 0}$ satisfies \eqref{eq_Z}. Let $t \in [T_{k}, T_{k+1})$ and set $V_{t,1} = V_{T_{k-1}} \backslash \{U_k\}$ and $V_{t,2} = V_{T_k} \backslash V_{t,1}$. That is, $V_{t,1}$ is the set of individuals alive at time $T_{k-1}$ that neither branch nor die at time $T_k$. If individual $U_k$ branches at time $T_k$, $V_{t,2}$ contains the indices of individual $U_k$ and its offspring, while if individual $U_k$ dies at time $T_k$, $V_{t,2}$ is empty. As we consider $t \in [T_{k}, T_{k+1})$, the sets $V_{t,1}$ and $V_{t,2}$ depend on $k$, but this dependence has been suppressed for ease of notation. Note that $V_t = V_{T_k}$, hence $V_t =V_{t,1} \cup V_{t,2}$ and
\begin{equation*}
    \langle \mathcal{Z}_t,f \rangle = \sum\limits_{u \in V_{t,1}} f(X^u_t) + \sum\limits_{u \in V_{t,2}} f(X^u_t).
\end{equation*}
By definition of $M^{f,u}_{T_k,t}$, we can write
\begin{align*}
    \sum\limits_{u \in V_{t,1}} f(X^u_t) = \sum\limits_{u \in V_{t,1}} \Bigg[ f(X^u_{T_{k}}) + M^{f,u}_{T_{k},t} + \int_{T_{k}}^t  \sum\limits_{i=1}^2 \Bigg(\frac{\partial f}{\partial x_i}(X^u_s)  \int_0^s \int_{\mathbb{R}^2} & L^i_{s-r}(X^u_s - y) \mathcal{Z}_r(dy)dr \\
     & \quad + \frac{\sigma^2}{2}\frac{\partial^2 f}{\partial x_i^2}(X^u_s)\Bigg)ds \Bigg].
\end{align*}
Since $V_{t,1} = V_{T_{k-1}} \backslash \{U_k\}$, we have
\begin{equation*}
    \sum\limits_{u \in V_{t,1}} f(X^u_{T_{k}}) = \left(\sum\limits_{u \in V_{T_{k-1}}} f(X^u_{T_{k}-})\right) - f(X^{U_k}_{T_{k}-}).
\end{equation*}
Similarly, we can write
\begin{align*}
    \sum\limits_{u \in V_{T_{k-1}}} f(X^u_{T_{k}-}) = \sum\limits_{u \in V_{T_{k-1}}}  & \Bigg[ f(X^u_{T_{k-1}}) + M^{f,u}_{T_{k-1},T_k-} \\
     & + \int_{T_{k-1}}^{T_k}  \sum\limits_{i=1}^2 \Bigg(\frac{\partial f}{\partial x_i}(X^u_s) \int_0^s \int_{\mathbb{R}^2}L^i_{s-r}(X^u_s - y) \mathcal{Z}_r(dy)dr + \frac{\sigma^2}{2}\frac{\partial^2 f}{\partial x_i^2}(X^u_s)\Bigg)ds \Bigg].
\end{align*}
We then obtain
\begin{align}
& \sum\limits_{u \in V_{t,1}} f(X^u_t) \nonumber\\& = \langle \mathcal{Z}_{T_{k-1}},f \rangle - f(X^{U_k}_{T_{k}-}) \nonumber\\
        & \quad + \sum\limits_{u \in V_{T_{k-1}}} \left[ M^{f,u}_{T_{k-1},T_k-} + \int_{T_{k-1}}^{T_k}  \sum\limits_{i=1}^2 \left(\frac{\partial f}{\partial x_i}(X^u_s) \int_0^s \int_{\mathbb{R}^2}L^i_{s-r}(X^u_s - y) \mathcal{Z}_r(dy)dr + \frac{\sigma^2}{2}\frac{\partial^2 f}{\partial x_i^2}(X^u_s)\right)ds \right] \nonumber\\
        & \quad + \sum\limits_{u \in V_{t,1}} \left[ M^{f,u}_{T_{k},t} + \int_{T_{k}}^t  \sum\limits_{i=1}^2 \left(\frac{\partial f}{\partial x_i}(X^u_s) \int_0^s \int_{\mathbb{R}^2}L^i_{s-r}(X^u_s - y) \mathcal{Z}_r(dy)dr + \frac{\sigma^2}{2}\frac{\partial^2 f}{\partial x_i^2}(X^u_s)\right)ds \right].\label{eq_Z_1}
\end{align}
Again, we can write
\begin{align*}
    \sum\limits_{u \in V_{t,2}} f(X^u_t) = \sum\limits_{u \in V_{t,2}} \bigg[ & f\big(X^u_{T_{k}}\big) + M^{f,u}_{T_{k},t} \\
    & + \int_{T_{k}}^t \sum\limits_{i=1}^2 \left(\frac{\partial f}{\partial x_i}(X^u_s) \int_0^s \int_{\mathbb{R}^2}L^i_{s-r}(X^u_s - y) \mathcal{Z}_r(dy)dr + \frac{\sigma^2}{2}\frac{\partial^2 f}{\partial x_i^2}(X^u_s)\right)ds \bigg].
\end{align*}
If individual $U_k$ branches apically at time $T_k$, then $V_{T_k} = V_{T_{k-1}} \cup \{I_0 + k\}$, $V_{t,2} = \{U_{k},I_0 + k\}$ and \[\displaystyle\sum\limits_{u \in V_{t,2}} f\left(X^u_{T_k}\right) = 2f\left(X^{U_{k}}_{T_k-}\right).\]
If individual $U_k$ dies at time $T_k$, then $V_{T_{k}} = V_{T_{k-1}} \backslash \{U_{k}\}$, $V_{t,2} = \emptyset$ and \[\displaystyle\sum\limits_{u \in V_{t,2}} f\left(X^u_{T_k}\right) = 0.\]
If individual $U_k$ branches laterally at time $T_k$, then $V_{T_k} = V_{T_{k-1}} \cup \{I_0 + k\}$, $V_{t,2} = \{U_{k},I_0 + k\}$ and \[\displaystyle\sum\limits_{u \in V_{t,2}} f\left(X^u_{T_k}\right) = f\left(X^{U_{k}}_{T_k-}\right)+f\left(X^{U_{k}}_{\theta_k T_k}\right).\]
We have $t \in [T_k,T_{k+1})$ so that the only branching or death event occurring during the interval $(T_{k-1},t]$ is at time $T_k$, and we can write
\begin{align*}
    \sum\limits_{u \in V_{t,2}} f(X^u_{T_k}) = &\int_{(T_{k-1},t]\times \mathbb{N}\times\mathbb{R}_+\times (0,1)} \Big(\mathds{1}_{\{u \in V_{s-}, a \le b_1(X^u_{s-})\}} 2f(X^u_{s-}) \\
    & \qquad +\mathds{1}_{\{u \in V_{\theta s}, b_1(X^u_{s-})+d(X^u_{s-},\mathcal{Z}_{s-}) < a \le b_1(X^u_{s-})+b_2(X^u_{\theta s})s+d(X^u_{s-},\mathcal{Z}_{s-})\}}(f(X^u_{\theta s})+f(X^u_{s-}))\Big) \\
    & \hspace{10.5cm}  \times M(ds,du,da,d\theta).
\end{align*}
Similarly,
\begin{align*}
    f\left(X^{U_k}_{T_{k}-}\right) = &\int_{(T_{k-1},t]\times \mathbb{N}\times\mathbb{R}_+\times (0,1)} \Big(\mathds{1}_{\{u \in V_{s-}, a \le b_1(X^u_{s-})+d(X^u_{s-},\mathcal{Z}_{s-})\}} \\
    & \qquad+ \mathds{1}_{\{u \in V_{\theta s}, b_1(X^u_{s-})+d(X^u_{s-},\mathcal{Z}_{s-}) < a \le b_1(X^u_{s-})+b_2(X^u_{\theta s})s+d(X^u_{s-},\mathcal{Z}_{s-})\}} \Big)f(X^u_{s-}) \\
    & \hspace{8.7cm} \times M(ds,du,da,d\theta).
\end{align*}
We then obtain
\begin{align}
&\sum\limits_{u \in V_{t,2}} f\left(X^u_t\right) \nonumber\\
& = f\left(X^{U_k}_{T_{k}-}\right) +\sum \limits_{u \in V_{t,2}} \left[ M^{f,u}_{T_{k},t} + \int_{T_{k}}^t  \sum\limits_{i=1}^2 \left(\frac{\partial f}{\partial x_i}(X^u_s) \int_0^s \int_{\mathbb{R}^2}L^i_{s-r}(X^u_s - y) \mathcal{Z}_r(dy)drds + \frac{\sigma^2}{2}\frac{\partial^2 f}{\partial x_i^2}(X^u_s)ds\right) \right]\nonumber\\
&\quad +\int_{(T_{k-1},t]\times \mathbb{N}\times\mathbb{R}_+\times (0,1)} \Big((\mathds{1}_{\{u \in V_{s-}, a \le b_1(X^u_{s-})\}}-\mathds{1}_{\{u \in V_{s-}, b_1(X^u_{s-}) < a \le b_1(X^u_{s-})+d(X^u_{s-},\mathcal{Z}_{s-})\}}) f(X^u_{s-}) \label{eq_Z_2}  \\
& \qquad \qquad+ \mathds{1}_{\{u \in V_{\theta s}, b_1(X^u_{s-})+d(X^u_{s-},\mathcal{Z}_{s-}) < a \le b_1(X^u_{s-})+b_2(X^u_{\theta s})s+d(X^u_{s-},\mathcal{Z}_{s-})\}}f(X^u_{\theta s})\Big) M(ds,du,da,d\theta).\nonumber 
\end{align}
Adding \eqref{eq_Z_1} and \eqref{eq_Z_2}, we obtain
\begin{align*}
    &\langle \mathcal{Z}_t,f \rangle \\
    &= \langle \mathcal{Z}_{T_{k-1}}, f \rangle \\
    & \quad + \sum\limits_{u \in V_{T_k}} \left[ M^{f,u}_{T_{k},t} + \int_{T_{k}}^t  \sum\limits_{i=1}^2 \left(\frac{\partial f}{\partial x_i}(X^u_s) \int_0^s \int_{\mathbb{R}^2}L^i_{s-r}(X^u_s - y) \mathcal{Z}_r(dy)drds + \frac{\sigma^2}{2}\frac{\partial^2 f}{\partial x_i^2}(X^u_s)ds\right) \right]\\
    &\quad  + \sum\limits_{u \in V_{T_{k-1}}} \left[ M^{f,u}_{T_{k-1},T_k} + \int_{T_{k-1}}^{T_k}  \sum\limits_{i=1}^2 \left(\frac{\partial f}{\partial x_i}(X^u_s) \int_0^s \int_{\mathbb{R}^2}L^i_{s-r}(X^u_s - y) \mathcal{Z}_r(dy)drds + \frac{\sigma^2}{2}\frac{\partial^2 f}{\partial x_i^2}(X^u_s)ds\right) \right]\\
    &\quad  +\int_{(T_{k-1},t]\times \mathbb{N}\times\mathbb{R}_+\times (0,1)} \Big((\mathds{1}_{\{u \in V_{s-}, a \le b_1(X^u_{s-})\}}-\mathds{1}_{\{u \in V_{s-}, b_1(X^u_{s-}) < a \le b_1(X^u_{s-})+d(X^u_{s-},\mathcal{Z}_{s-})\}}) f(X^u_{s-}) \\
    & \qquad \qquad + \mathds{1}_{\{u \in V_{\theta s}, b_1(X^u_{s-})+d(X^u_{s-},\mathcal{Z}_{s-}) < a \le b_1(X^u_{s-})+b_2(X^u_{\theta s})s+d(X^u_{s-},\mathcal{Z}_{s-})\}}f(X^u_{\theta s})\Big) M(ds,du,da,d\theta).
\end{align*}
Noticing that $V_s = V_{T_{k-1}}$ for every $s \in [T_{k-1},T_k)$ and $V_s = V_{T_k}$ for every $s \in [T_k,t)$, we can write
\begin{align}
\langle \mathcal{Z}_t,f \rangle = & \langle \mathcal{Z}_{T_{k-1}}, f \rangle + \int_{T_{k-1}}^t \int_{\mathbb{R}^2} \left[\sum\limits_{i=1}^2 \left(\frac{\partial f}{\partial x_i}(x) \int_0^s\int_{\mathbb{R}^2} L^i_{s-r}(x-y) \mathcal{Z}_r(dy)dr + \frac{\sigma^2}{2} \frac{\partial^2 f}{\partial x_i^2}(x)\right) \right] \mathcal{Z}_s(dx)ds \nonumber\\
& +\int_{(T_{k-1},t]\times \mathbb{N}\times\mathbb{R}_+\times (0,1)} \Big((\mathds{1}_{\{u \in V_{s-}, a \le b_1(X^u_{s-})\}}-\mathds{1}_{\{u \in V_{s-}, b_1(X^u_{s-}) < a \le b_1(X^u_{s-})+d(X^u_{s-},\mathcal{Z}_{s-})\}}) f(X^u_{s-}) \nonumber\\
& \qquad \quad + \mathds{1}_{\{u \in V_{\theta s}, b_1(X^u_{s-})+d(X^u_{s-},\mathcal{Z}_{s-}) < a \le b_1(X^u_{s-})+b_2(X^u_{\theta s})s+d(X^u_{s-},\mathcal{Z}_{s-})\}}f(X^u_{\theta s})\Big) M(ds,du,da,d\theta) \nonumber\\
& + \sum\limits_{u \in V_{T_{k-1}}} M^{f,u}_{T_{k-1},T_k}+ \sum\limits_{u \in V_{T_k}} M^{f,u}_{T_{k},t}.\label{eq_Z_3}
\end{align}
Now, using the induction hypothesis we have
\begin{equation}
\begin{aligned}
& \langle \mathcal{Z}_{T_{k-1}},f \rangle \\
& = \langle \mathcal{Z}_0,f \rangle + \int_0^{T_{k-1}}\int_{\mathbb{R}^2} \left[\sum\limits_{i=1}^2 \left(\frac{\partial f}{\partial x_i} \int_0^s\int_{\mathbb{R}^2} L^i_{s-r}(x-y) \mathcal{Z}_r(dy)dr + \frac{\sigma^2}{2} \frac{\partial^2 f}{\partial x_i^2}\right) \right] \mathcal{Z}_s(dx)ds +  M^f_{0,T_{k-1}} \\
& \quad + \int_{[0,T_{k-1}] \times \mathbb{N} \times \mathbb{R}_+\times (0,1)} \Big((\mathds{1}_{\{u \in V_{s-}, a \le b_1(X^u_{s-})\}}-\mathds{1}_{\{u \in V_{s-}, b_1(X^u_{s-}) < a \le b_1(X^u_{s-})+d(X^u_{s-},\mathcal{Z}_{s-})\}}) f(X^u_{s-}) \\
& \hspace{1.5cm}+ \mathds{1}_{\{u \in V_{\theta s}, b_1(X^u_{s-})+d(X^u_{s-},\mathcal{Z}_{s-}) < a \le b_1(X^u_{s-})+b_2(X^u_{\theta s})s+d(X^u_{s-},\mathcal{Z}_{s-})\}}f(X^u_{\theta s})\Big) M(ds,du,da,d\theta).\label{eq_Z_4}
\end{aligned}
\end{equation}
Therefore, combining \eqref{eq_Z_3} and \eqref{eq_Z_4}, we have for every $t\in [T_k,T_{k+1})$,
\begin{align*}
    \langle \mathcal{Z}_t,f \rangle =& \langle \mathcal{Z}_0,f \rangle + \int_0^t\int_{\mathbb{R}^2} \left[\sum\limits_{i=1}^2 \left(\frac{\partial f}{\partial x_i} \int_0^s\int_{\mathbb{R}^2} L^i_{s-r}(x-y) \mathcal{Z}_r(dy)dr + \frac{\sigma^2}{2} \frac{\partial^2 f}{\partial x_i^2}\right) \right] \mathcal{Z}_s(dx)ds+M^f_{0,t}\\
    &+ \int_{[0,t] \times \mathbb{N} \times \mathbb{R}_+\times (0,1)} \Big((\mathds{1}_{\{u \in V_{s-}, a \le b_1(X^u_{s-})\}}-\mathds{1}_{\{u \in V_{s-}, b_1(X^u_{s-}) < a \le b_1(X^u_{s-})+d(X^u_{s-},\mathcal{Z}_{s-})\}}) f(X^u_{s-}) \\
    & \qquad + \mathds{1}_{\{u \in V_{\theta s}, b_1(X^u_{s-})+d(X^u_{s-},\mathcal{Z}_{s-}) < a \le b_1(X^u_{s-})+b_2(X^u_{\theta s})s+d(X^u_{s-},\mathcal{Z}_{s-})\}}f(X^u_{\theta s})\Big) M(ds,du,da,d\theta),
\end{align*}
which concludes the proof of Proposition \ref{Prop_eq_Z}.
\end{proof}

We can now show that $\sup_{k \ge 0} \ T_k = +\infty$ a.s., that is for every $t \ge 0$, the number of branching and death events occurring in the time interval $[0,t]$ is a.s. finite. The technique is standard and follows the same line as the proof of the corresponding result in \cite{Champagnat}.
\begin{proof}[Proof of Proposition~\ref{Prop_Tk}]
For every $j\in \mathbb{N}$, let $\tau_j = \inf \{t > 0 : \langle \mathcal{Z}_t,1 \rangle = j \}$, with the convention that $\tau_j=0$ if $\langle\mathcal{Z}_0,1\rangle\geq j$. Taking $f \equiv 1$ in \eqref{eq_Z}, we can write for every $t\geq 0$:
\begin{align*}
    \langle \mathcal{Z}_{t\wedge \tau_j},1 \rangle & = \langle \mathcal{Z}_0,1 \rangle +  \int_{[0,t\wedge \tau_j] \times \mathbb{N} \times \mathbb{R}_+\times (0,1)} \Big(\mathds{1}_{\{u\in V_{s-},a \le b_1(X^u_{s-})\}}-\mathds{1}_{\{u\in V_{s-},b_1(X^u_{s-}) < a \le b_1(X^u_{s-})+d(X^u_{s-},\mathcal{Z}_{s-})\}}\\
    & \qquad \qquad \qquad+ \mathds{1}_{\{u\in V_{\theta s},b_1(X^u_{s-})+d(X^u_{s-},\mathcal{Z}_{s-}) < a \le b_1(X^u_{s-})+b_2(X^u_{\theta s})s+d(X^u_{s-},\mathcal{Z}_{s-})\}}\Big) M(ds,du,da,d\theta)\\
    &\le \langle \mathcal{Z}_0,1 \rangle + \int_{[0,t\wedge \tau_j] \times \mathbb{N} \times \mathbb{R}_+ \times (0,1)} \Big(\mathds{1}_{\{u\in V_{s-},a \le b_1(X^u_{s-})\}}\\
    & \qquad \qquad \qquad + \mathds{1}_{\{u\in V_{\theta s},b_1(X^u_{s-})+d(X^u_{s-},\mathcal{Z}_{s-}) < a \le b_1(X^u_{s-})+b_2(X^u_{\theta s})s+d(X^u_{s-},\mathcal{Z}_{s-})\}}\Big) M(ds,du,da,d\theta).
\end{align*}
Thus, taking expectations on both sides of the inequality and using the fact that $b_1$ and $b_2$ are bounded by assumption, we obtain
\begin{align}
\mathbb{E}\left[\langle \mathcal{Z}_{t\wedge \tau_j},1 \rangle\right] \nonumber & \le \ \mathbb{E}\left[\langle \mathcal{Z}_0,1 \rangle\right] + \mathbb{E}\Bigg[\int_{0}^{t\wedge \tau_j} \int_{\mathbb{N} \times \mathbb{R}_+\times (0,1)} \Big(\mathds{1}_{\{u\in V_{s-},a \le b_1(X^u_{s-})\}}\nonumber \\
& \qquad \qquad \quad  +\mathds{1}_{\{u\in V_{\theta s},b_1(X^u_{s-})+d(X^u_{s-},\mathcal{Z}_{s-}) < a \le b_1(X^u_{s-})+b_2(X^u_{\theta s})s+d(X^u_{s-},\mathcal{Z}_{s-})\}}\Big) n(du)\,da\,d\theta\,ds\Bigg]\nonumber\\
& \le \ \mathbb{E}\left[\langle \mathcal{Z}_0,1 \rangle\right] + \mathbb{E}\left[\int_{0}^{t\wedge \tau_j} \int_0^1 \left(\sum\limits_{u \in V_{s-}} b_1(X^u_{s-}) + \sum\limits_{u \in V_{\theta s}} b_2(X^u_{\theta s})s \right)d\theta \,ds \right]\nonumber\\
& \le \ \mathbb{E}\left[\langle \mathcal{Z}_0,1 \rangle\right] + B_1 \ \int_{0}^{t}  \mathbb{E}\left[\langle \mathcal{Z}_{s\wedge \tau_j},1 \rangle\right] ds +  B_2\,\mathbb{E}\left[\int_0^{t\wedge \tau_j} \int_0^1 s\langle \mathcal{Z}_{\theta s},1 \rangle d\theta ds\right]. \label{expectation}
\end{align}
Let us consider the last term on the r.h.s. of \eqref{expectation}. Changing variables in $r = \theta s$, we can write
\begin{align*}
    B_2\, \mathbb{E}\left[\int_0^{t\wedge \tau_j} \int_0^1 s \ \langle \mathcal{Z}_{\theta s},1 \rangle\right] d\theta ds
    & = B_2\, \mathbb{E}\left[\int_0^{t\wedge \tau_j} \int_0^s \langle \mathcal{Z}_{r},1 \rangle\right] dr ds\\
    &= B_2 \,\mathbb{E}\left[ \int_0^{t\wedge \tau_j} (t\wedge \tau_j -r) \, \langle \mathcal{Z}_{r},1 \rangle dr \right]\\
    & \leq B_2 \,\mathbb{E}\left[\int_0^{t\wedge \tau_j} (t-r) \, \langle \mathcal{Z}_{r\wedge \tau_j},1 \rangle dr \right]\\
    & \leq B_2 \int_0^{t} (t-r) \, \,\mathbb{E}\big[\langle \mathcal{Z}_{r\wedge \tau_j},1 \rangle\big] dr.
\end{align*}
Coming back to \eqref{expectation}, we obtain
\begin{equation*}
    \mathbb{E}\left[\langle \mathcal{Z}_{t\wedge \tau_j},1 \rangle\right] \le \mathbb{E}\left[\langle \mathcal{Z}_0,1 \rangle\right] + \int_0^t (B_1 + (t-s)B_2) \ \mathbb{E}\left[\langle \mathcal{Z}_{s\wedge \tau_j},1 \rangle\right] ds,
\end{equation*}
and this holds true for every $t\geq 0$. Gronwall's lemma (see \emph{e.g.} Theorem 16 in \cite{Gronwall}) then yields for each given integer $j$,
\begin{equation}\label{gronwall}
\mathbb{E}\left[\langle \mathcal{Z}_{t\wedge\tau_j},1 \rangle\right] \le \mathbb{E}\left[\langle \mathcal{Z}_0,1 \rangle\right] e^{B_1 t + \frac{B_2}{2}t^2}, \qquad \forall t\geq 0.
\end{equation}
Let now $t>0$ and set $\alpha_t = \mathbb{P}(\sup_{n\in \mathbb{N}} \tau_n \leq t)$. We have for every large $j$
\begin{equation*}
\mathbb{E}\left[\langle \mathcal{Z}_{t\wedge\tau_j},1 \rangle\right] \ge \mathbb{E}\left[\langle \mathcal{Z}_{t\wedge\tau_j},1 \rangle \mathds{1}_{\{\sup_{n\in \mathbb{N}} \tau_n \leq t\}} \right] =  j \alpha_t.
\end{equation*}
If $\alpha_t$ were positive, then letting $j$ tend to infinity would yield a contradiction with \eqref{gronwall}. Therefore, we can conclude that $\alpha_t =0$, and since this holds true for any $t>0$, we obtain that
$$
\sup_{n\in \mathbb{N}} \tau_n =+\infty \qquad \hbox{a.s.}
$$
Using Fatou's Lemma, we can also conclude that
\begin{equation}
\label{ineq_Z}
    \mathbb{E}\left[\langle \mathcal{Z}_t,1 \rangle\right] \le \mathbb{E}\left[\langle \mathcal{Z}_0,1 \rangle\right] e^{B_1t + \frac{B_2}{2} t^2}, \qquad \forall t\geq 0.
\end{equation}

Recall that our aim is to show that the sequence $(T_k)_{k\geq 1}$ of jump times of our process tends to infinity almost surely as $k\rightarrow \infty$. To this end, for every $t\geq 0$ let
\begin{align*}
N_t = & \int_{[0,t] \times \mathbb{N} \times \mathbb{R}_+\times (0,1)} \Big(\mathds{1}_{\{u\in V_{s-},a \le b_1(X^u_{s-})\}} \\
& \qquad +\mathds{1}_{\{u\in V_{\theta s},b_1(X^u_{s-}) + d(X^u_{s-},\mathcal{Z}_{s-}) < a \le b_1(X^u_{s-})+b_2(X^u_{\theta s})s+d(X^u_{s-},\mathcal{Z}_{s-})\}}\Big) M(ds,du,da,d\theta)
\end{align*}
be the number of branching events occurring during the interval of time $[0,t]$. Using the same reasoning as before, we have
\begin{align}
\label{ineq_N}
    \mathbb{E}[N_t] &\le \int_0^t (B_1 + (t-s)B_2) \ \mathbb{E}\left[\langle \mathcal{Z}_s,1 \rangle\right] ds \nonumber \\
    & \le \mathbb{E}\left[\langle \mathcal{Z}_0,1 \rangle\right] e^{B_1t + \frac{B_2}{2} t^2} \ \int_0^t (B_1 + (t-s)B_2)ds   \nonumber \\
    & \le \mathbb{E}[\langle \mathcal{Z}_0,1 \rangle] \left(B_1t + \frac{B_2}{2}t^2\right)e^{B_1t + \frac{B_2}{2} t^2}.
\end{align}
Likewise, let $D_t$ be the number of deaths occurring during the interval of time $[0,t]$. Since the number of deaths $D_t$ cannot exceed the number of individuals that have lived during $[0,t]$, we have $D_t \le \langle \mathcal{Z}_0,1 \rangle + N_t$ and so
\begin{equation*}
    \mathbb{E}[N_t + D_t] \le \mathbb{E}[\langle \mathcal{Z}_0,1 \rangle] \left(1 + (2B_1t + B_2 t^2)e^{B_1t + \frac{B_2}{2} t^2}\right) < \infty.
\end{equation*}
This implies that for every $t \ge 0$, the number of branching and death events occurring during $[0,t]$ is a.s. finite, and so $\mathbb{P}(\sup_{k \to \infty} T_k \le t) = 0$, which concludes the construction of the process. 
\end{proof}

\section{Martingale problem}
\label{Section 3}

In this section, we prove Proposition~\ref{prop:martingale pb} for $f \in \mathcal{C}^2_b(\mathbb{R}^2)$. The proof is quite standard and relies on controlling the number of branching and death events and the total branching and death rates over any time interval $[0,t]$. The control on the number of branching and death events has already been obtained in the proof of Proposition~\ref{Prop_Tk}. The assumption that $b_1$ and $b_2$ are bounded by $B_1$ and $B_2$ respectively, combined with \eqref{ineq_Z}, gives us a bound on the expectation of the total branching rates. Finally, as $d(x,\mathcal{Z}_t) \le C \langle \mathcal{Z}_t, 1\rangle$, the expectation of the total death rate is bounded by the second moment of the total mass of the process, which we shall bound using the following result, whose proof is postponed until the end of the section for the sake of clarity.
 \begin{Lem}
    \label{Lem_ineq_Z2}
    For every $t \ge 0$
    \begin{equation}
\label{ineq_Z2}
    \mathbb{E}\left[\sup\limits_{0 \le s \le t}\langle \mathcal{Z}_s,1 \rangle^2\right] \le \mathbb{E}\left[\langle \mathcal{Z}_0,1 \rangle^2\right] e^{3\left(B_1t + \frac{B_2}{2}t^2\right)}.
\end{equation}
\end{Lem}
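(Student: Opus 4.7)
The plan is to apply Proposition~\ref{Prop_eq_Z} with the test function $f \equiv 1$: all drift and diffusion contributions vanish, so $(\langle \mathcal{Z}_t,1\rangle)_{t\ge 0}$ becomes a pure jump process, increasing by $1$ at each birth and decreasing by $1$ at each death. Writing out the associated jumps of its square and using the elementary bounds $(n+1)^2-n^2 = 2n+1 \le 3n$ and $(n-1)^2 - n^2 = -2n+1 \le 0$, valid whenever $n \ge 1$, I obtain the pointwise inequality
\begin{equation*}
\langle \mathcal{Z}_s,1\rangle^2 \le \langle \mathcal{Z}_0,1\rangle^2 + \int_{[0,s]\times \mathbb{N}\times \mathbb{R}_+\times (0,1)} 3\langle \mathcal{Z}_{r-},1\rangle\,\mathds{1}_{\text{birth at }r}\, M(dr,du,da,d\theta),
\end{equation*}
where $\mathds{1}_{\text{birth at }r}$ collects the apical and lateral branching indicators appearing in \eqref{eq_Z}. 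The right-hand side is non-decreasing in $s$, so taking the supremum over $s\le t$ leaves the bound unchanged.

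To ensure integrability, I introduce the stopping times $\tau_j = \inf\{s\ge 0 : \langle \mathcal{Z}_s,1\rangle \ge j\}$ already used in the proof of Proposition~\ref{Prop_Tk}, which satisfy $\tau_j \to +\infty$ almost surely. Taking expectations and expressing the total birth rate exactly as in the proof of Proposition~\ref{Prop_Tk} — the change of variable $r=\theta s$ turns the lateral branching contribution into $B_2\int_0^s \langle \mathcal{Z}_r,1\rangle\,dr$ — I get
\begin{equation*}
\mathbb{E}\!\left[\sup_{s\le t\wedge\tau_j} \langle \mathcal{Z}_s,1\rangle^2\right] \le \mathbb{E}\!\left[\langle \mathcal{Z}_0,1\rangle^2\right] + 3\,\mathbb{E}\!\left[\int_0^{t\wedge\tau_j}\!\langle \mathcal{Z}_s,1\rangle\!\left(B_1\langle \mathcal{Z}_s,1\rangle + B_2\!\int_0^s\!\langle \mathcal{Z}_r,1\rangle\,dr\right)\!ds\right].
\end{equation*}

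The crossed term is controlled by $\langle \mathcal{Z}_s,1\rangle\int_0^s \langle \mathcal{Z}_r,1\rangle\,dr \le s\sup_{r\le s}\langle \mathcal{Z}_r,1\rangle^2$, so setting $\phi_j(t) := \mathbb{E}[\sup_{s\le t\wedge\tau_j}\langle \mathcal{Z}_s,1\rangle^2]$ and invoking Fubini, the estimate reduces to the integral inequality
\begin{equation*}
\phi_j(t) \le \mathbb{E}\!\left[\langle \mathcal{Z}_0,1\rangle^2\right] + \int_0^t 3(B_1 + B_2 s)\phi_j(s)\,ds.
\end{equation*}
Gronwall's lemma then yields $\phi_j(t) \le \mathbb{E}[\langle \mathcal{Z}_0,1\rangle^2]\,\exp\!\bigl(3B_1 t + \tfrac{3}{2}B_2 t^2\bigr)$, and monotone convergence as $j\to +\infty$ concludes the proof. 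The only mildly delicate step is the handling of the lateral branching contribution, whose total rate depends on the full history $\int_0^s \langle \mathcal{Z}_r,1\rangle\,dr$ — this is precisely what forces a time-dependent coefficient $3(B_1 + B_2 s)$ in the Gronwall argument and produces the $t^2$ term in the exponent.
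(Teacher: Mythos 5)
Your argument follows the same route as the paper's proof: decompose the jumps of $\langle\mathcal{Z}_\cdot,1\rangle^2$ against $M$, discard the (nonpositive) death contributions, use that the remaining integral is nondecreasing in $s$ to pass the supremum inside the expectation, convert the lateral-branching compensator via the change of variables $r=\theta s$, and close with a Gronwall inequality with time-dependent coefficient $3(B_1+B_2s)$. Your explicit localisation by the stopping times $\tau_j$ followed by monotone convergence as $j\to\infty$ is in fact slightly more careful than the paper, which takes expectations directly and refers back to the reasoning used for \eqref{ineq_Z}.

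There is, however, one step that does not hold as stated: the pointwise bound $(n+1)^2-n^2=2n+1\le 3n$ with $n=\langle\mathcal{Z}_{r-},1\rangle$ requires $n\ge 1$ at every birth time. This is automatic for apical branching (the parent is alive at $r-$), but not for lateral branching: a lateral birth at time $r$ only requires an ancestor alive at some earlier time $\theta r$, and the apex population may have gone extinct by time $r-$ (the death rate of a lone apex need not vanish, and the filaments left behind by dead apexes keep branching laterally --- this is precisely a feature of the model). On the event that $\langle\mathcal{Z}_{r-},1\rangle=0$ and a lateral birth occurs at time $r$, the square jumps by $1$ while your majorant contributes $0$, so your pointwise inequality fails there. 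The fix is immediate and is what the paper does: keep the exact increment $2\langle\mathcal{Z}_{r-},1\rangle+1$ in the integrand, and only after taking suprema and expectations bound $2\sup_{r\le s}\langle\mathcal{Z}_r,1\rangle^2+\sup_{r\le s}\langle\mathcal{Z}_r,1\rangle\le 3\sup_{r\le s}\langle\mathcal{Z}_r,1\rangle^2$, which is valid because the supremum is integer-valued (it is either $0$, in which case the process is trivial, or at least $1$). With this one-line correction the rest of your proof goes through unchanged and yields \eqref{ineq_Z2}.
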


\begin{rem}Following the same reasoning as in the proof of Proposition~\ref{Prop_eq_Z}, an equation analogous to \eqref{eq_Z} can be obtained for $f \in \mathcal{C}^{1,2}_b([0,T] \times \mathbb{R}^2)$, and the proof of Proposition~\ref{prop:martingale pb} in the general case then follows the same reasoning as in the following proof.\end{rem}

\begin{proof}[Proof of Proposition~\ref{prop:martingale pb}]
Let $f \in \mathcal{C}^2_b(\mathbb{R}^2)$. For every $t \ge 0$, we have
\begin{align*}
    & \int_{[0,t] \times \mathbb{N} \times \mathbb{R}_+ \times (0,1)} \Big(\left(\mathds{1}_{\{u\in V_{s-},a \le b_1(X^u_{s-})\}}-\mathds{1}_{\{u\in V_{s-},b_1(X^u_{s-}) < a \le b_1(X^u_{s-})+d(X^u_{s-},\mathcal{Z}_{s-})\}}\right) f(X^u_{s-})\\
    & \hspace{1cm}+ \mathds{1}_{\{u\in V_{\theta s},b_1(X^u_{s-})+d(X^u_{s-},\mathcal{Z}_{s-}) < a \le b_1(X^u_{s-})+b_2(X^u_{\theta s})s+d(X^u_{s-},\mathcal{Z}_{s-})\}} f(X^u_{\theta s}) \Big) n(du)\,da\,d\theta\,ds\\
    & = \int_0^t \left(\sum\limits_{u \in V_{s-}} \left(b_1(X^u_{s-}) - d(X^u_{s-},\mathcal{Z}_{s-})\right)f(X^u_{s-}) + \int_0^1 \left(\sum\limits_{u \in V_{\theta s}}b_2(X^u_{\theta s})s f(X^u_{\theta s})\right)d\theta \right)ds \\
    & = \int_0^t \left(\int_{\mathbb{R}^2} \left(b_1(x)-d(x,\mathcal{Z}_s)\right)f(x) \mathcal{Z}_s(dx)  +\int_0^1\int_{\mathbb{R}^2} b_2(x)sf(x)  \mathcal{Z}_{\theta s}(dx) d\theta\right) ds.\\
\end{align*}
Changing variables in $r = \theta s$, we can write
\begin{align*}
    \int_0^t \int_0^1 \int_{\mathbb{R}^2} b_2(x)sf(x) \mathcal{Z}_{\theta s}(dx) d\theta ds &= \int_0^t \int_0^s \int_{\mathbb{R}^2} b_2(x)f(x)\mathcal{Z}_r(dx) dr ds\\
    &= \int_0^t (t-r) \int_{\mathbb{R}^2} b_2(x)f(x)\mathcal{Z}_r(dx) dr.
\end{align*}
Hence,
\begin{equation}
\begin{aligned}
    & \int_{[0,t] \times \mathbb{N} \times \mathbb{R}_+ \times (0,1)} \Big(\left(\mathds{1}_{\{u\in V_{s-},a \le b_1(X^u_{s-})\}}-\mathds{1}_{\{u\in V_{s-},b_1(X^u_{s-}) < a \le b_1(X^u_{s-})+d(X^u_{s-},\mathcal{Z}_{s-})\}}\right) f(X^u_{s-})\\
    & \qquad+ \mathds{1}_{\{u\in V_{\theta s},b_1(X^u_{s-})+d(X^u_{s-},\mathcal{Z}_{s-}) < a \le b_1(X^u_{s-})+b_2(X^u_{\theta s})s+d(X^u_{s-},\mathcal{Z}_{s-})\}} f(X^u_{\theta s}) \Big) n(du)\,da\,d\theta\,ds,\\
    & = \int_0^t\int_{\mathbb{R}^2}  \left(b_1(x)+b_2(x)(t-s)-d(x,\mathcal{Z}_s)\right)f(x) \mathcal{Z}_s(dx)ds. \label{eq_3.1}
\end{aligned}
\end{equation}
Combining \eqref{eq_Z} and \eqref{eq_3.1}, we can then write for every $t \ge 0$
\begin{equation}
\label{eq_Z2}
\begin{aligned}
    &\langle \mathcal{Z}_t,f \rangle - \langle \mathcal{Z}_0,f \rangle - \int_0^t\int_{\mathbb{R}^2} \Bigg[\sum\limits_{i=1}^2 \left(\frac{\partial f}{\partial x_i}(x) \int_0^s\int_{\mathbb{R}^2} L^i_{s-r}(x-y) \mathcal{Z}_r(dy)dr + \frac{\sigma^2}{2} \frac{\partial^2 f}{\partial x_i^2}(x)\right) \\
    & \hspace{5.5cm}+ (b_1(x) + b_2(x)(t-s)-d(x,\mathcal{Z}_s))f(x) \Bigg] \mathcal{Z}_s(dx)ds\\
    &= M_{0,t}^f + \int_{[0,t] \times \mathbb{N} \times \mathbb{R}_+ \times (0,1)} \Big(\left(\mathds{1}_{\{u\in V_{s-},a \le b_1(X^u_{s-})\}}-\mathds{1}_{\{u\in V_{s-},b_1(X^u_{s-}) < a \le b_1(X^u_{s-})+d(X^u_{s-},\mathcal{Z}_{s-})\}}\right) f(X^u_{s-})\\
    & \hspace{4.5cm} + \mathds{1}_{\{u\in V_{\theta s},b_1(X^u_{s-})+d(X^u_{s-},\mathcal{Z}_{s-}) < a \le b_1(X^u_{s-})+b_2(X^u_{\theta s})s+d(X^u_{s-},\mathcal{Z}_{s-})\}} f(X^u_{\theta s}) \Big)\\
    & \hspace{9cm}\times(M(ds,du,da,d\theta)-ds\,n(du)\,da\,d\theta),
\end{aligned}
\end{equation}
where $M^f_{0,t}$ was defined in Proposition \ref{Prop_eq_Z}.

We start by showing that $(M_{0,t}^f)_{t \ge 0}$ is a martingale. Let $k \ge 1$, $u \in V_{T_{k-1}}$, and $t \in [T_{k-1}, T_{k})$. Using Ito's formula, we can write
\begin{align*}
    f(X^u_t) =& f\left(X^u_{T_{k-1}}\right) + \sigma \sum\limits_{i=1}^2 \int_{T_{k-1}}^t \frac{\partial f}{\partial x_i}(X^u_s) dW^{u,i}_s \\
    & + \sum\limits_{i=1}^2 \int_{T_{k-1}}^t  \left[\frac{\partial f}{\partial x_i}(X^u_s) \int_0^s \int_{\mathbb{R}^2}L^i_{s-r}(X^u_s - x) \mathcal{Z}_r(dx)dr+ \frac{\sigma^2}{2}\frac{\partial^2 f}{\partial x_i^2}(X^u_s)\right]ds,
\end{align*}
where we have written $W^u = (W^{u,1},W^{u,2})$ for the two coordinates of the standard Brownian motion corresponding to individual $u$. Thus, recalling the definitions of $M^{f,u}$ and $M^{f}$ given in Proposition~\ref{Prop_eq_Z}, we have
\begin{equation*}
    M^{f,u}_{T_k,t} = \sigma \sum\limits_{i=1}^2 \int_{T_k}^t \frac{\partial f}{\partial x_i}(X^u_s) dW^{u,i}_s,
\end{equation*}
and
\begin{equation*}
    M^f_{s,t} = \sigma \sum_{u \in \mathbb{N}} \sum_{i = 1}^2 \int_s^t \mathds{1}_{\{ u \in V_r \}} \frac{\partial f}{\partial x_i}\left(X_r^u\right) dW_r^{u,i}.
\end{equation*}
Consequently, for any $s \ge 0$, $(M_{s,t}^f)_{t \ge 0}$ is a local martingale with quadratic variation
\begin{equation}
\label{ineq_M}
    \left\langle M^f_{s,.} \right\rangle_t = \sigma^2 \sum_{u \in \mathbb{N}} \sum_{i = 1}^2 \int_s^t \mathds{1}_{\{ u \in V_r \}} \left(\frac{\partial f}{\partial x_i}\left(X_r^u\right)\right)^2 dr \le \sigma^2 \|f\|_{2,\infty}^2 (t-s)(\langle \mathcal{Z}_s,1 \rangle + N_{s,t}),
\end{equation}
where
\begin{equation}\label{norm}
\|f\|_{2,\infty} = \displaystyle\sum\limits_{|\alpha| \le 2} \|\partial^{\alpha}f\|_\infty,
\end{equation}
and $N_{s,t}$ is the number of branching events occurring during $[s,t]$. Taking expectations on both sides of \eqref{ineq_M} and using \eqref{ineq_Z} and \eqref{ineq_N}, we obtain
\begin{equation*}
    \mathbb{E}\left[\left\langle M^f_{0,.} \right\rangle_t\right] \le \sigma^2 \|f\|^2_{2,\infty} t \, \mathbb{E}\left[\langle \mathcal{Z}_0 ,1 \rangle\right] \left(1 + B_1t + \frac{B_2}{2}t^2\right)e^{B_1t + \frac{B_2}{2} t^2}  < \infty
\end{equation*}
for every $t \ge 0$, which suffices to prove that $(M_{0,t}^f)_{t \ge 0}$ is a martingale.

We now show that
\begin{align}
    &\Bigg(\int_{[0,t] \times \mathbb{N} \times \mathbb{R}_+ \times (0,1)} \Big(\left(\mathds{1}_{\{u\in V_{s-},a \le b_1(X^u_{s-})\}}-\mathds{1}_{\{u\in V_{s-},b_1(X^u_{s-}) \le a \le b_1(X^u_{s-})+d(X^u_{s-},\mathcal{Z}_{s-})\}}\right) f(X^u_{s-}) \label{H}\\
    &+ \mathds{1}_{\{u\in V_{\theta s},b_1(X^u_{s-})+d(X^u_{s-}) < a \le b_1(X^u_{s-})+b_2(X^u_{\theta s})s+d(X^u_{s-},\mathcal{Z}_{s-})\}} f(X^u_{\theta s}) \Big)(M(ds,du,da,d\theta)-dsn(du)dzd\theta)\Bigg)_{t\ge 0}\nonumber
\end{align}
is a martingale.
Let
\begin{align*}
    H(s,u,a,\theta) =&\left(\mathds{1}_{\{u\in V_{s-},a \le b_1(X^u_{s-})\}}-\mathds{1}_{\{u\in V_{s-},b_1(X^u_{s-}) < a \le b_1(X^u_{s-})+d(X^u_{s-},\mathcal{Z}_{s-})\}}\right) f(X^u_{s-})\\
    & \quad + \mathds{1}_{\{u\in V_{\theta s},b_1(X^u_{s-})+d(X^u_{s-},\mathcal{Z}_{s-}) < a \le b_1(X^u_{s-})+b_2(X^u_{\theta s})s+d(X^u_{s-},\mathcal{Z}_{s-})\}} f(X^u_{\theta s}).
\end{align*}
Since $\theta \in (0,1)$, the process $(H(s,u,a,\theta))_{s \ge 0}$ is predictable. Thus, to show that the process in \eqref{H} is a martingale, it suffices to show that for every $t \ge 0$,
\begin{equation}\label{integrability}
    \mathbb{E}\left[\displaystyle\int_0^t \int_{\mathbb{N}\times \mathbb{R_+}\times (0,1)}|H(s,u,a,\theta)| ds\, n(du)\, da\, d\theta\right] < +\infty.
\end{equation}
For every $t \ge 0$, we have
\begin{align*}
    &\int_{0}^t \int_{\mathbb{N} \times \mathbb{R}_+\times (0,1)} \Big|\Big(\mathds{1}_{\{u\in V_{s-},z \le b_1(X^u_{s-})\}}-\mathds{1}_{\{u\in V_{s-},b_1(X^u_{s-}) < z \le b_1(X^u_{s-})+d(X^u_{s-},\mathcal{Z}_{s-})\}}\Big) f(X^u_{s-})\\
    &\hspace{2.5cm}+ \mathds{1}_{\{u\in V_{\theta s},b_1(X^u_{s-})+d(X^u_{s-}) < z \le b_1(X^u_{s-})+b_2(X^u_{\theta s})s+d(X^u_{s-},\mathcal{Z}_{s-})\}} f(X^u_{\theta s})\Big|ds\,n(du)\,dz\,d\theta \\
    & \le \|f\|_{2,\infty} \int_{0}^t \int_{\mathbb{N} \times \mathbb{R}_+\times (0,1)} \Big(\mathds{1}_{\{u\in V_{s-},z \le b_1(X^u_{s-})+d(X^u_{s-},\mathcal{Z}_{s-})\}}\\
    &\hspace{3cm} + \mathds{1}_{\{u\in V_{\theta s},b_1(X^u_{s-})+d(X^u_{s-},\mathcal{Z}_{s-}) < z \le b_1(X^u_{s-})+b_2(X^u_{\theta s})s+d(X^u_{s-},\mathcal{Z}_{s-})\}}\Big)ds\,n(du)\,dz\,d\theta\\
    & \le \|f\|_{2,\infty} \int_{0}^t \left[ B_1 \langle \mathcal{Z}_s,1\rangle + \sum\limits_{u \in V_{s}}d(X^u_{s}, \mathcal{Z}_{s}) +  B_2s \int_0^1 \langle \mathcal{Z}_{\theta s},1\rangle d\theta \right]ds.
\end{align*}
Recalling that $d(X^u_{s}, \mathcal{Z}_{s}) \le C \langle \mathcal{Z}_s,1\rangle$, we then have
\begin{align*}
    &\int_{0}^t \int_{\mathbb{N} \times \mathbb{R}_+\times (0,1)} \Big|\Big(\mathds{1}_{\{u\in V_{s-},z \le b_1(X^u_{s-})\}}-\mathds{1}_{\{u\in V_{s-},b_1(X^u_{s-}) < z \le b_1(X^u_{s-})+d(X^u_{s-},\mathcal{Z}_{s-})\}}\Big) f(X^u_{s-})\\
    & \hspace{2.5cm} + \mathds{1}_{\{u\in V_{\theta s},b_1(X^u_{s-})+d(X^u_{s-},\mathcal{Z}_{s-}) < z \le b_1(X^u_{s-})+b_2(X^u_{\theta s})s+d(X^u_{s-},\mathcal{Z}_{s-})\}} f(X^u_{\theta s})\Big|ds\,n(du)\,dz\,d\theta \\
    & \le \|f\|_{2,\infty} \int_0^t \left[(B_1+C \langle \mathcal{Z}_s,1\rangle)\langle \mathcal{Z}_s,1\rangle +  B_2s \int_0^1 \langle \mathcal{Z}_{\theta s},1\rangle d\theta \right]ds.
\end{align*}
As earlier, we can write, using Fubini's theorem,
\begin{align*}
    \int_0^t  \int_0^1 B_2s \langle \mathcal{Z}_{\theta s},1\rangle d\theta ds &= B_2 \int_0^t \int_0^s  \langle \mathcal{Z}_{r},1\rangle dr ds = B_2 \int_0^t (t-r)  \langle \mathcal{Z}_{r},1\rangle dr.
\end{align*}
Thus,
\begin{align}
    &\int_{0}^t \int_{\mathbb{N} \times \mathbb{R}_+\times (0,1)} \Big|\Big(\mathds{1}_{\{u\in V_{s-},z \le b_1(X^u_{s-})\}}-\mathds{1}_{\{u\in V_{s-},b_1(X^u_{s-}) < z \le b_1(X^u_{s-})+d(X^u_{s-},\mathcal{Z}_{s-})\}}\Big) f(X^u_{s-}) \nonumber\\
    & \hspace{2.5cm} + \mathds{1}_{\{u\in V_{\theta s},b_1(X^u_{s-})+d(X^u_{s-},\mathcal{Z}_{s-}) < z \le b_1(X^u_{s-})+b_2(X^u_{\theta s})s+d(X^u_{s-},\mathcal{Z}_{s-})\}} f(X^u_{\theta s})\Big|ds\,n(du)\,dz\,d\theta \nonumber \\
    & \le \|f\|_{2,\infty} \int_0^t \left[(B_1 + B_2(t-s))\langle \mathcal{Z}_s,1\rangle +  C \langle \mathcal{Z}_s,1\rangle^2\right] ds.\label{ineq_H}
\end{align}
To bound the expectation of the quantity on the right hand side of \eqref{ineq_H}, we use the result of Lemma~\ref{Lem_ineq_Z2}. Taking expectations on both sides of \eqref{ineq_H} and using \eqref{ineq_Z} and \eqref{ineq_Z2}, we then obtain
\begin{align*}
    &\mathbb{E}\Bigg[\int_{0}^t \int_{\mathbb{N} \times \mathbb{R}_+\times (0,1)} \Big|\Big(\mathds{1}_{\{u\in V_{s-},z \le b_1(X^u_{s-})\}}-\mathds{1}_{\{u\in V_{s-},b_1(X^u_{s-}) < z \le b_1(X^u_{s-})+d(X^u_{s-},\mathcal{Z}_{s-})\}}\Big) f(X^u_{s-})\\
    & \hspace{3cm}+ \mathds{1}_{\{u\in V_{\theta s},b_1(X^u_{s-})+d(X^u_{s-},\mathcal{Z}_{s-}) < z \le b_1(X^u_{s-})+b_2(X^u_{\theta s})s+d(X^u_{s-},\mathcal{Z}_{s-})\}} f(X^u_{\theta s})\Big|ds\,n(du)\,dz\,d\theta \Bigg] \\
    &\le C \|f\|_{2,\infty} \mathbb{E}\left[\langle \mathcal{Z}_0,1 \rangle^2\right]  e^{3\left(B_1t + \frac{B_2}{2}t^2\right)} < \infty,
\end{align*}
thus proving that the process in \eqref{H} is a martingale.
We have obtained that for every $f \in \mathcal{C}_b^2(\mathbb{R}^2)$, the process
\begin{align*}
    \Bigg( \langle \mathcal{Z}_t,f \rangle - \langle \mathcal{Z}_0,f \rangle - \int_0^t\int_{\mathbb{R}^2} \Bigg[ & \sum\limits_{i=1}^2 \left(\frac{\partial f}{\partial x_i}(x) \int_0^s\int_{\mathbb{R}^2} L^i_{s-r}(x-y) \mathcal{Z}_r(dy)dr + \frac{\sigma^2}{2} \frac{\partial^2 f}{\partial x_i^2}(x)\right) \\
    & \ + (b_1(x) + b_2(x)(t-s)-d(x,\mathcal{Z}_s))f(x) \Bigg] \mathcal{Z}_s(dx)ds\Bigg)_{t\ge 0}
\end{align*}
is a martingale, and the proof of Proposition \ref{prop:martingale pb} is complete.
\end{proof}

\begin{proof} [Proof of Lemma \ref{Lem_ineq_Z2}]
We can write for every $s \ge 0$,
\begin{align*}
    \langle \mathcal{Z}_s,1 \rangle^2 =& \langle \mathcal{Z}_0,1 \rangle^2 + \int_{[0,s] \times \mathbb{N}\times \mathbb{R}_+\times (0,1)} \Big( \mathds{1}_{\{u \in V_{r-}, a \le b_1(X^u_{r-})\}}\left[(\langle\mathcal{Z}_{r-},1 \rangle + 1)^2-\langle\mathcal{Z}_{r-},1 \rangle^2\right] \\
    & \quad+ \mathds{1}_{\{u \in V_{\theta r},b_1(X^u_{r-})+d(X^u_{r-},\mathcal{Z}_{r-}) < a \le b_1(X^u_{r-}) +b_2(X^u_{\theta r})r + d(X^u_{r-}\mathcal{Z}_{r-})\}}\left[(\langle\mathcal{Z}_{r-},1 \rangle + 1)^2-\langle\mathcal{Z}_{r-},1 \rangle^2\right]\\
    & \quad + \mathds{1}_{\{u \in V_{r-},b_1(X^u_{r-}) < a \le b_1(X^u_{r-})+d(X^u_{r-},\mathcal{Z}_{r-})\}} \left[(\langle\mathcal{Z}_{r-},1 \rangle - 1)^2-\langle\mathcal{Z}_{r-},1 \rangle^2\right]\Big)M(dr,du,da,d\theta)\\
    \le & \langle \mathcal{Z}_0,1 \rangle^2 + \int_{[0,s]\times \mathbb{N}\times \mathbb{R}_+\times (0,1)} \left(2 \langle\mathcal{Z}_{r-},1 \rangle + 1 \right) \Big(\mathds{1}_{\{u \in V_{r-},a \le b_1(X^u_{s-})\}}\\
    & \quad + \mathds{1}_{\{u \in V_{\theta r},b_1(X^u_{r-})+d(X^u_{r-},\mathcal{Z}_{r-}) < a \le b_1(X^u_{r-}) +b_2(X^u_{\theta r})r + d(X^u_{r-},\mathcal{Z}_{r-})\}}\Big) M(dr,du,da,d\theta).
\end{align*}
Note that the integral appearing on the right hand side of the above inequality is a nondecreasing function of $s$. Therefore, taking the supremum over $[0,t]$ and then taking expectations on both sides of the inequality, we arrive at
\begin{align*}
    \mathbb{E}\left[\sup\limits_{0 \le s \le t}\langle \mathcal{Z}_s,1 \rangle^2\right] &\le \mathbb{E}\left[\langle \mathcal{Z}_0,1 \rangle^2\right] + \int_0^t \mathbb{E}\left[\left(B_1 \langle\mathcal{Z}_{s-},1 \rangle + B_2s \int_0^1 \langle\mathcal{Z}_{\theta s},1 \rangle d\theta \right) \left(2 \langle\mathcal{Z}_{s-},1 \rangle + 1 \right)\right] ds\\
    &\le \mathbb{E}\left[\langle \mathcal{Z}_0,1 \rangle^2\right] + \int_0^t \mathbb{E}\left[(B_1 + B_2s) \left(2\sup\limits_{0 \le r \le s}\langle \mathcal{Z}_r,1 \rangle^2  + \sup\limits_{0 \le r \le s}\langle \mathcal{Z}_r,1 \rangle \right)\right] ds\\
    &\le \mathbb{E}\left[\langle \mathcal{Z}_0,1 \rangle^2\right] + \int_0^t 3 (B_1 + B_2s) \mathbb{E}\left[\sup\limits_{0 \le r \le s}\langle \mathcal{Z}_r,1 \rangle^2\right] ds,
\end{align*}
where, on the last line, we have used the fact that the variables $\langle \mathcal{Z}_r,1 \rangle$ are integer-valued. Using the same reasoning as in the proof of \eqref{ineq_Z}, we then obtain that for every $t \ge 0$
\begin{equation*}
    \mathbb{E}\left[\sup\limits_{0 \le s \le t}\langle \mathcal{Z}_s,1 \rangle^2\right] \le \mathbb{E}\left[\langle \mathcal{Z}_0,1 \rangle^2\right] e^{3\left(B_1t + \frac{B_2}{2}t^2 \right)}.
\end{equation*}
\end{proof}

\section{Large population limit}
\label{Section 4}

In this section, we prove Theorem~\ref{th:convergence} by first showing that the sequence of processes $(\mathcal{Z}^N)_{N \ge 1}$ is tight in Section~\ref{4.1}. Then, in Section~\ref{4.2} we show that any converging subsequence of $(\mathcal{Z}^N)_{N \ge 1}$ converges to a process satisfying~\eqref{eq_Zlim}. Finally, we complete the proof by showing the uniqueness of the solution to~\eqref{eq_Zlim} in Section~\ref{subs:uniqueness}. We also show, in Section~\ref{4.4}, that at any time $t>0$ the limiting measure $\mathcal{Z}_t^\infty$ is absolutely continuous with respect to Lebesgue measure on $\mathbb{R}^2$ and we give the partial integro-differential equation satisfied by the density.

\subsection{Tightness}
\label{4.1}

We use the classical approach which consists in showing that $(\mathcal{Z}^N)_{N \in \mathbb{N}}$ is tight in $D_{\mathcal{M}(\hat{\mathbb{R}}^2)}[0,\infty)$, where $\hat{\mathbb{R}}^2$ is the one point compactification of $\mathbb{R}^2$ and $\mathcal{M}(\hat{\mathbb{R}}^2)$ is the set of all finite measures on $\hat{\mathbb{R}}^2$, and then showing that no mass in $\mathcal{Z}^N$ goes to infinity as $N$ tends to infinity, so that $(\mathcal{Z}^N)_{N \in \mathbb{N}}$ is tight in $D_{\mathcal{M}(\mathbb{R}^2)}[0,\infty)$. The arguments used in Section~\ref{4.1.1} to prove the tightness on the compactified space are, once again, standard. However in Section~\ref{4.1.2}, due to the lateral branching, we will need a new argument to control the spatial extent of the population over the time interval $[0,t]$, uniformly in $N$, so that we can show that no mass escapes to infinity.

\subsubsection{Tightness on the compactified space}
\label{4.1.1}

Using Theorem~3.9.1 in \cite{EK}, we show that $(\mathcal{Z}^N)_{N \in \mathbb{N}}$ is tight in $D_{\mathcal{M}(\hat{\mathbb{R}}^2)}[0,\infty)$, by showing that the total mass process can be controlled uniformly in $N$ over any finite time interval and that for every continuous function $f$ on $\hat{\mathbb{R}}^2$, $(\langle \mathcal{Z}_.^N,f\rangle)_{N \in \mathbb{N}}$ is tight. Given that $\mathcal{C}^2(\hat{\mathbb{R}}^2)$ is dense in the set $\mathcal{C}(\hat{\mathbb{R}}^2)$ of all continuous function on $\hat{\mathbb{R}}^2$ for the uniform convergence, it is enough to show that $(\langle \mathcal{Z}_.^N,f\rangle)_{N \in \mathbb{N}}$ is tight for every $f \in \mathcal{C}^2(\hat{\mathbb{R}}^2)$.

Let us prove that the total mass process can be controlled uniformly in $N$ over any finite time interval. Let thus $T \ge 0$, $\varepsilon > 0$ and $N \ge 1$. Using \eqref{ineq_Z2} for the unscaled process $\mathcal{Z}^{(N)}$ defined in the paragraph just before \eqref{normalised L d} and dividing by $N^2$, we have
\begin{equation*}
    \mathbb{E}\left[\sup\limits_{0 \le t \le T}\langle \mathcal{Z}^N_t,1 \rangle^2\right] \le \mathbb{E}\left[\langle \mathcal{Z}^N_0,1 \rangle^2\right] e^{3(B_1T + \frac{B_2}{2}T^2)}.
\end{equation*}
Using Markov's inequality and recalling that $\sup_{N \in \mathbb{N}} \mathbb{E}[\langle \mathcal{Z}_0^N,1 \rangle^2] < + \infty$ by assumption, we obtain that there exists a constant $C_T$ independent of $N$ such that for any $K > 0$,
\begin{align*}
    \mathbb{P}\left(\sup_{t \le T} \ \langle \mathcal{Z}_t^N , 1 \rangle > K\right) \le \frac{C_T}{K^2}.
\end{align*}
From this, we deduce that for every $\varepsilon>0$, there exists $K_\varepsilon>0$ such that for every $N \ge 1$,
\begin{equation*}
    \mathbb{P}\left(\sup_{t \le T} \ \langle \mathcal{Z}_t^N , 1 \rangle > K_\varepsilon\right) \le \varepsilon.
\end{equation*}
Since $\{\nu \in \mathcal{M}(\hat{\mathbb{R}}^2) : \langle \nu,1 \rangle \le K\}$ is a compact subset of $\mathcal{M}(\hat{\mathbb{R}}^2)$, the compact containment condition is satisfied.

Let $f \in\mathcal{C}^2(\hat{\mathbb{R}}^2)$, and let us use the Aldous-Rebolledo criterion \cite{Aldous,Rebolledo} to show that $(\langle \mathcal{Z}_.^N,f\rangle)_{N \in \mathbb{N}}$ is tight. We start by showing that for any $t \ge 0$, $(\langle \mathcal{Z}^N_t ,f \rangle)_{N \ge 1}$ is tight.

Let $t \ge 0$. Using \eqref{ineq_Z} for the unscaled process $\mathcal{Z}^{(N)}$ and dividing by $N$, we have for every $N \ge 1$,
\begin{equation*}
    \mathbb{E}\left[|\langle \mathcal{Z}_t^N,f \rangle|\right] \le \|f\|_{2,\infty}\mathbb{E}\left[\langle \mathcal{Z}_0^N,1 \rangle\right]e^{B_1 t + \frac{B_2}{2} t^2}.
\end{equation*}
Let $\varepsilon > 0$. Using Markov's inequality and the fact that, by the Cauchy-Schwarz inequality, we have
$$
\mathbb{E}\big[\langle \mathcal{Z}_0^N,1 \rangle\big] \le \mathbb{E}\Big[\langle \mathcal{Z}_0^N,1 \rangle^2\Big]^{\frac{1}{2}},
$$
together with the assumption that the second moment of $\langle \mathcal{Z}_0^N,1 \rangle$ is bounded uniformly in $N$, we obtain that there exists a constant $C_t>0$ such that we have for every $N \ge 1$ and every $K>0$,
\begin{equation*}
    \mathbb{P}\left(|\langle \mathcal{Z}_t^N,f \rangle| > K\right) \le \frac{C_t}{K}.
\end{equation*}
We can thus find $K_\varepsilon>0$ large enough such that, for every $N \ge 1$,
\begin{equation*}
    \mathbb{P}\left(|\langle \mathcal{Z}_t^N,f \rangle| > K_\varepsilon \right) \le \varepsilon.
\end{equation*}
Since this is true for any $\varepsilon>0$, we can conclude that $(\langle \mathcal{Z}_t^N,f \rangle)_{N \in \mathbb{N}}$ is indeed tight.

Let us now show that for every $T \ge 0$, every sequence $(\tau_N)_{N \in \mathbb{N}}$ of stopping times bounded by $T$ and every $\varepsilon > 0$, there exists $\delta_0 > 0$ such that
\begin{equation*}
    \sup \limits_{\delta \in [0, \delta_0]}\mathbb{P}\left( \left| \langle \mathcal{Z}^N_{\tau_N + \delta} - \mathcal{Z}^N_{\tau_N}, f \rangle\right| > \varepsilon \right) \le \varepsilon.
\end{equation*}
Let $(\tau_N)_{N \in \mathbb{N}}$ be a sequence of stopping times bounded by $T$, and let us show that for every $N \ge 1$ and $\delta \le 1$,
\begin{equation}\label{control QV}
    \mathbb{E}\left[\left| \langle \mathcal{Z}^N_{\tau_N + \delta} - \mathcal{Z}^N_{\tau_N}, f \rangle\right|\right] \le C_T \sqrt{\delta},
\end{equation}
for some constant $C_T$ independent of $N$ and $\delta$. Indeed, once this inequality is proved, Markov's inequality gives us that for every $\varepsilon > 0$
\begin{equation*}
    \mathbb{P}\left( \left| \langle \mathcal{Z}^N_{\tau_N + \delta} - \mathcal{Z}^N_{\tau_N}, f \rangle\right| > \varepsilon \right) \le \frac{C_T \sqrt{\delta}}{\varepsilon},
\end{equation*}
and setting $\delta_0 = \varepsilon^4/C_T^2$ then yields the result.

Hence, let us show \eqref{control QV}. Using \eqref{eq_Z2} for the unscaled process, dividing by $N$ and recalling our scaling assumptions~\eqref{normalised L d}, we can write for every $N \ge 1$ and $t \ge 0$
\begin{align*}
     &\langle \mathcal{Z}^{N}_t,f \rangle - \langle \mathcal{Z}^{N}_0,f \rangle - \int_0^t\int_{\mathbb{R}^2} \Bigg[\sum\limits_{i=1}^2 \left(\frac{\partial f}{\partial x_i}(x) \int_0^s\int_{\mathbb{R}^2} L^{i}_{s-r}(x-y) \mathcal{Z}^{N}_r(dy)dr + \frac{\sigma^2}{2} \frac{\partial^2 f}{\partial x_i^2}(x)\right) \\
    & \hspace{5.5cm}+ (b_1(x) + b_2(x)(t-s)-d(x,\mathcal{Z}^{N}_s))f(x) \Bigg] \mathcal{Z}^{N}_s(dx)ds\\
    &= \frac{1}{N} M_{0,t}^{N,f} \\
    &\quad + \frac{1}{N} \int_{[0,t] \times \mathbb{N} \times \mathbb{R}_+ \times (0,1)} \bigg\{\left(\mathds{1}_{\{u\in V^N_{s-},a \le b_1(X^{N,u}_{s-})\}}-\mathds{1}_{\{u\in V^N_{s-},b_1(X^{N,u}_{s-}) < a \le b_1(X^{N,u}_{s-})+d(X^{N,u}_{s-},\mathcal{Z}^{N}_{s-})\}}\right) f(X^{N,u}_{s-})\\
    & \hspace{4.4cm}+ \mathds{1}_{\{u\in V^N_{\theta s},b_1(X^{N,u}_{s-})+d(X^{N,u}_{s-},\mathcal{Z}^{N}_{s-}) < a \le b_1(X^{N,u}_{s-})+b_2(X^{N,u}_{\theta s})s+d(X^{N,u}_{s-},\mathcal{Z}^{N}_{s-})\}} f(X^{N,u}_{\theta s}) \bigg\}\\
    & \hspace{10cm}\times(M(ds,du,da,d\theta)-ds\,n(du)\,da\,d\theta).
\end{align*}
Consequently, we have
\begin{align}
\label{ineq_tightness}
   \left| \langle \mathcal{Z}^N_{\tau_N + \delta} - \mathcal{Z}^N_{\tau_N}, f \rangle\right| \le & \Bigg| \int_{\tau_N}^{\tau_N + \delta} \int_{\mathbb{R}^2}\Bigg[ \sum_{i=1}^2 \left(\frac{\partial f}{\partial x_i}(x) \int_0^s \int_{\mathbb{R}^2} L^i_{s-r}(x-y)\mathcal{Z}_r^N(dy)dr + \frac{\sigma^2}{2}\frac{\partial^2 f}{\partial x_i^2}(x)\right) \nonumber \\
   & \hspace{2cm}+(b_1(x) + b_2(x)(\tau_N+\delta -s)-d(x,\mathcal{Z}^N_s))f(x) \Bigg] \mathcal{Z}_s^N(dx)ds\Bigg| \nonumber \\
   & + \frac{1}{N} \left|M_{\tau_N,\tau_N + \delta}^{N,f}\right|  \\
   & + \frac{1}{N} \Bigg|\int_{(\tau_N,\tau_N + \delta] \times \mathbb{N} \times \mathbb{R}_+ \times (0,1)} \Big(\mathds{1}_{\{u\in V^N_{s-},a \le b_1(X^{N,u}_{s-})\}}f(X^{N,u}_{s-}) \nonumber  \\
   & \hspace{1cm}-\mathds{1}_{\{u\in V^N_{s-},b_1(X^{N,u}_{s-}) < a \le b_1(X^{N,u}_{s-})+d(X^{N,u}_{s-},\mathcal{Z}^N_{s-})\}}f(X^{N,u}_{s-})\nonumber \\
   & \hspace{1cm} + \mathds{1}_{\{u\in V^N_{\theta s},b_1(X^{N,u}_{s-})+d(X^{N,u}_{s-},\mathcal{Z}^N_{s-}) < a \le b_1(X^{N,u}_{s-})+b_2(X^{N,u}_{\theta s})s+d(X^{N,u}_{s-},\mathcal{Z}^N_{s-})\}} f(X^{N,u}_{\theta s}) \Big)\nonumber \\
   &\hspace{6.5cm}\times (M(ds,du,da,d\theta)-ds\,n(du)\,da\,d\theta)\Bigg| \nonumber.
\end{align}
First, using Assumption \eqref{L_1}, we can write for every $s \in [\tau_N, \tau_N + \delta]$ and every $x \in \mathbb{R}^2$
\begin{align*}
    & \left| \sum_{i=1}^2 \left(\frac{\partial f}{\partial x_i}(x) \int_0^s \int_{\mathbb{R}^2} L^i_{s-r}(x-y)\mathcal{Z}_r^N(dy)dr + \frac{\sigma^2}{2}\frac{\partial^2 f}{\partial x_i^2}(x)\right) +(b_1(x) + b_2(x)(\tau_N+\delta-s)-d(x,\mathcal{Z}^N_s))f(x) \right|\\
     &\le \|f\|_{2,\infty} \left(\int_0^s h_1(s-r)\langle \mathcal{Z}_r^N , 1 \rangle dr+ \sigma^2 + B_1+B_2(T+\delta) + C\langle \mathcal{Z}_s^N , 1 \rangle \right) \\
     &\le \|f\|_{2,\infty} \left(\sigma^2 + B_1 +B_2(T+\delta)+ \left(C+\int_0^s h_1(s-r) dr\right)\sup\limits_{t \le T+\delta}\langle \mathcal{Z}_t^N , 1 \rangle \right).
\end{align*}
Thus, integrating against $\mathcal{Z}^N_s$ and using the fact that $h_1$ is integrable, we can write for every $s \in [\tau_N, \tau_N + \delta]$
\begin{align*}
    &\int_{\mathbb{R}^2}\Bigg| \sum_{i=1}^2 \left(\frac{\partial f}{\partial x_i}(x) \int_0^s \int_{\mathbb{R}^2} L^i_{s-r}(x-y)\mathcal{Z}_r^N(dy)dr + \frac{\sigma^2}{2}\frac{\partial^2 f}{\partial x_i^2}(x)\right) \\
    & \qquad \quad + (b_1(x) + b_2(x)(\tau_N+\delta -s)-d(x,\mathcal{Z}^N_s))f(x) \Bigg| \mathcal{Z}_s^N(dx) \\
    &\le \|f\|_{2,\infty} \left(\sigma^2 + B_1+B_2(T+\delta) + C' \sup\limits_{t \le T+\delta}\langle \mathcal{Z}_t^N , 1 \rangle \right)\sup\limits_{t \le T+\delta}\langle \mathcal{Z}_t^N , 1 \rangle.
\end{align*}
Integrating over $[\tau_N,\tau_N + \delta]$, taking expectation and using \eqref{ineq_Z} and \eqref{ineq_Z2}, we obtain
\begin{align}
\label{ineq_1}
    &\mathbb{E}\Bigg[\Bigg| \int_{\tau_N}^{\tau_N + \delta} \int_{\mathbb{R}^2}\Bigg[ \sum_{i=1}^2 \left(\frac{\partial f}{\partial x_i}(x) \int_0^s \int_{\mathbb{R}^2} L^i_{s-r}(x-y)\mathcal{Z}_r^N(dy)dr + \frac{\sigma^2}{2}\frac{\partial^2 f}{\partial x_i^2}(x)\right) \nonumber \\
    &\hspace{2.5cm}+(b_1(x) + b_2(x)(\tau_N+\delta-s)-d(x,\mathcal{Z}^N_s))f(x) \Bigg]\mathcal{Z}_s^N(dx)ds\Bigg|\Bigg] \nonumber\\
    &\le C_T \delta \|f\|_{2,\infty} \mathbb{E}\left[\left(1 +\sup\limits_{t \le T+\delta}\langle \mathcal{Z}_t^N , 1 \rangle \right)\sup\limits_{t \le T+\delta}\langle \mathcal{Z}_t^N , 1 \rangle\right] \nonumber\\
    &\le C'_T\delta \|f\|_{2,\infty} e^{3(B_1(T+\delta)+\frac{B_2}{2}(T+\delta)^2)}.
\end{align}
This gives us appropriate control on the first term on the right hand side of \eqref{ineq_tightness}.
Considering the unscaled process $\mathcal{Z}^{(N)}$ and using \eqref{ineq_M}, we can then write
\begin{equation*}
    \left\langle M^{N,f}_{\tau_N,\tau_N + .} \right\rangle_{\delta} \le \delta \sigma^2 \|f\|_{2,\infty}^2 (\langle \mathcal{Z}^{(N)}_0,1 \rangle + N_{0,t}).
\end{equation*}
Taking expectations on both sides of this inequality and using \eqref{ineq_N}, we obtain
\begin{align*}
    \mathbb{E}\left[ \left\langle M_{\tau_N,\tau_N + .}^{N,f} \right\rangle_{\delta}\right] & \le \delta \sigma^2 \|f\|_{2,\infty}^2  \mathbb{E}\left[\langle \mathcal{Z}^{(N)}_0,1 \rangle + N_{0,T+\delta}\right]\\
    & \le \delta \sigma^2 \|f\|_{2,\infty}^2 \mathbb{E}\left[\langle \mathcal{Z}^{(N)}_0,1 \rangle\right]\left(1 + \left(B_1(T+\delta) + \frac{B_2}{2}(T+\delta)^2\right) e^{B_1(T+\delta)+\frac{B_2}{2} (T+\delta)^2}\right),
\end{align*}
and, noting that $\dfrac{1}{N^2}\langle\mathcal{Z}^{(N)}_t,1\rangle = \dfrac{1}{N}\langle\mathcal{Z}^{N}_t,1\rangle$, we arrive at
\begin{align}
\label{ineq_2}
    \mathbb{E}\left[ \left\langle \frac{1}{N} M_{\tau_N,\tau_N + .}^{N,f}  \right\rangle_{\delta} \right] &\le \frac{1}{N} \delta \sigma^2 \|f\|_{2,\infty}^2 \mathbb{E}\left[\langle \mathcal{Z}^{N}_0,1 \rangle\right]\left(1 + \left(B_1(T+\delta) + \frac{B_2}{2}(T+\delta)^2\right) e^{B_1(T+\delta)+\frac{B_2}{2} (T+\delta)^2}\right) \nonumber \\
    &\le \frac{1}{N} C_T \delta \left(1+e^{B_1(T+\delta)+\frac{B_2}{2} (T+\delta)^2}\right).
\end{align}
Finally, we have
\begin{align*}
    & \mathbb{E}\Bigg[\Bigg( \frac{1}{N} \int_{[\tau_N,\tau_N + \delta] \times \mathbb{N} \times \mathbb{R}_+ \times (0,1)}\Bigg(\left(\mathds{1}_{\{u\in V^N_{s-},a \le b_1(X^{N,u}_{s-})\}}-\mathds{1}_{\{u\in V^N_{s-},b_1(X^{N,u}_{s-}) < a \le b_1(X^{N,u}_{s-})+d(X^{N,u}_{s-},\mathcal{Z}^N_{s-})\}}\right) f(X^{N,u}_{s-}) \\
    & \hspace{5cm} + \mathds{1}_{\{u\in V^N_{\theta s},b_1(X^{N,u}_{s-})+d(X^{N,u}_{s-},\mathcal{Z}^N_{s-}) < a \le b_1(X^{N,u}_{s-})+b_2(X^{N,u}_{\theta s})s+d(X^{N,u}_{s-},\mathcal{Z}^N_{s-})\}} f(X^{N,u}_{\theta s}) \Bigg) \\
    & \hspace{10cm}\times (M(ds,du,da,d\theta)-ds\, n(du) \,da\, d\theta) \Bigg)^2 \ \Bigg] \\
    &= \mathbb{E}\Bigg[\frac{1}{N^2} \int_{[\tau_N,\tau_N + \delta] \times \mathbb{N} \times \mathbb{R}_+ \times (0,1)}\Bigg(\left(\mathds{1}_{\{u\in V^N_{s-},a \le b_1(X^{N,u}_{s-})\}}-\mathds{1}_{\{u\in V^N_{s-},b_1(X^{N,u}_{s-}) < a \le b_1(X^{N,u}_{s-})+d(X^{N,u}_{s-},\mathcal{Z}^N_{s-})\}}\right) f(X^{N,u}_{s-}) \\
    & \hspace{5.4cm} + \mathds{1}_{\{u\in V^N_{\theta s},b_1(X^{N,u}_{s-})+d(X^{N,u}_{s-},\mathcal{Z}^N_{s-}) < a \le b_1(X^{N,u}_{s-})+b_2(X^{N,u}_{\theta s})s+d(X^{N,u}_{s-},\mathcal{Z}^N_{s-})\}} f(X^{N,u}_{\theta s}) \Bigg)^2\\
    & \hspace{14.5cm}\times ds\, n(du) \,da\, d\theta \ \Bigg].
\end{align*}
Let
\begin{align*}
    H(s,u,a,\theta) = & \left(\mathds{1}_{\{u\in V^N_{s-},a \le b_1(X^{N,u}_{s-})\}}-\mathds{1}_{\{u\in V^N_{s-},b_1(X^{N,u}_{s-}) < a \le b_1(X^{N,u}_{s-})+d(X^{N,u}_{s-},\mathcal{Z}^N_{s-})\}}\right) f(X^{N,u}_{s-}) \nonumber\\
    & + \mathds{1}_{\{u\in V^N_{\theta s},b_1(X^{N,u}_{s-})+d(X^{N,u}_{s-},\mathcal{Z}^N_{s-}) < a \le b_1(X^{N,u}_{s-})+b_2(X^{N,u}_{\theta s})s+d(X^{N,u}_{s-},\mathcal{Z}^N_{s-})\}} f(X^{N,u}_{\theta s}).
\end{align*}
We have
\begin{equation*}
H(s,u,a,\theta) =
\begin{cases}
    f(X^{N,u}_{s-})\quad  \text{ if $u \in V^N_{s-}$ and $a \le b_1(X^{N,u}_{s-})$}, \\
    - f(X^{N,u}_{s-}) \  \text{ if $u \in V^N_{s-}$ and $b_1(X^{N,u}_{s-}) <  a \le b_1(X^{N,u}_{s-})+d(X^{N,u}_{s-},\mathcal{Z}^N_{s-})$},\\
    f(X^{N,u}_{\theta s}) \quad  \text{ if $u \in V^N_{\theta s}$ and} \\
    \hspace{2cm} b_1(X^{N,u}_{s-})+d(X^{N,u}_{s-},\mathcal{Z}^N_{s-}) < a \le b_1(X^{N,u}_{s-})+b_2(X^{N,u}_{\theta s})s+d(X^{N,u}_{s-},\mathcal{Z}^N_{s-}),
\end{cases}
\end{equation*}
so that
\begin{equation*}
H(s,u,a,\theta)^2 =
\begin{cases}
    f(X^{N,u}_{s-})^2 \quad \text{ if $u \in V^N_{s-}$ and $a \le b_1(X^{N,u}_{s-})+d(X^{N,u}_{s-},\mathcal{Z}^N_{s-})$}\\
    f(X^{N,u}_{\theta s})^2 \quad  \text{ if $u \in V^N_{\theta s}$ and} \\
    \hspace{2cm} b_1(X^{N,u}_{s-})+d(X^{N,u}_{s-},\mathcal{Z}^N_{s-}) < a \le b_1(X^{N,u}_{s-})+b_2(X^{N,u}_{\theta s})s+d(X^{N,u}_{s-},\mathcal{Z}^N_{s-}).
\end{cases}
\end{equation*}
We thus have
\begin{align}
\label{ineq_3}
    & \mathbb{E}\Bigg[\Bigg( \frac{1}{N} \int_{[\tau_N,\tau_N + \delta] \times \mathbb{N} \times \mathbb{R}_+ \times (0,1)}\Bigg(\left(\mathds{1}_{\{u\in V^N_{s-},a \le b_1(X^{N,u}_{s-})\}}-\mathds{1}_{\{u\in V^N_{s-},b_1(X^{N,u}_{s-}) < a \le b_1(X^{N,u}_{s-})+d(X^{N,u}_{s-},\mathcal{Z}^N_{s-})\}}\right) f(X^{N,u}_{s-}) \nonumber\\
    & \hspace{5cm} + \mathds{1}_{\{u\in V^N_{\theta s},b_1(X^{N,u}_{s-})+d(X^{N,u}_{s-},\mathcal{Z}^N_{s-}) < a \le b_1(X^{N,u}_{s-})+b_2(X^{N,u}_{\theta s})s+d(X^{N,u}_{s-},\mathcal{Z}^N_{s-})\}} f(X^{N,u}_{\theta s}) \Bigg)\nonumber \\
    & \hspace{10cm}\times (M(ds,du,da,d\theta)-ds\, n(du) \,da\, d\theta) \Bigg)^2 \ \Bigg] \nonumber\\
    & = \frac{1}{N^2}\mathbb{E}\Bigg[\int_{\tau_N}^{\tau_N + \delta}\int_{\mathbb{N} \times \mathbb{R}_+\times (0,1)} \Big(\mathds{1}_{\{u \in V^N_{s-},a\le b_1(X_{s-}^{N,u})+d(X_{s-}^{N,u},\mathcal{Z}^N_{s-})\}} f(X_{s-}^{N,u})^2 \nonumber \\
    & \hspace{1.5cm}+ \mathds{1}_{\{u\in V^N_{\theta s},b_1(X_{s-}^{N,u})+d(X_{s-}^{N,u},\mathcal{Z}^N_{s-}) < a \le b_1(X_{s-}^{N,u})+b_2(X_{\theta s}^{N,u})s+d(X_{s-}^{N,u},\mathcal{Z}^N_{s-})\}} f(X^{N,u}_{\theta s})^2\Big) ds\,n(du)\,da\,d\theta \Bigg] \nonumber\\
    & \le \frac{1}{N^2}\mathbb{E}\left[\int_{\tau_N}^{\tau_N + \delta} \left( (B_1+C\langle \mathcal{Z}^N_s, 1 \rangle) \langle \mathcal{Z}^{(N)}_s, f^2\rangle + B_2s \int_0^1\langle \mathcal{Z}^{(N)}_{\theta s}, f^2\rangle d\theta\right) ds\right] \nonumber\\
    &\le \frac{1}{N}\delta\|f\|_{2,\infty}^2 \mathbb{E}\left[ B_1+B_2(T+\delta) \sup\limits_{s \le T+\delta}\langle \mathcal{Z}^N_s, 1\rangle + C \sup\limits_{s \le T+\delta}\langle \mathcal{Z}^N_s, 1\rangle ^2\right] \nonumber\\
    &\le \frac{1}{N}C_T\delta (1+e^{B_1(T+\delta)+ \frac{B_2}{2}(T+\delta)^2}).
\end{align}
Taking expectations on both sides of \eqref{ineq_tightness}, using \eqref{ineq_1} and applying Cauchy-Schwarz' inequality to \eqref{ineq_2} and \eqref{ineq_3}, we obtain for every $\delta \in (0,1)$
\begin{equation*}
    \mathbb{E}\left[ \left| \langle \mathcal{Z}^N_{\tau_N + \delta} - \mathcal{Z}^N_{\tau_N}, f \rangle\right|\right] \le C_T \sqrt{\delta},
\end{equation*}
and we can conclude that $(\langle \mathcal{Z}_.^N,f\rangle)_{N \in \mathbb{N}}$ is tight for all $f \in \mathcal{C}^2(\hat{\mathbb{R}}^2)$ and consequently that $(\mathcal{Z}^N)_{N \in \mathbb{N}}$ is tight in $D_{\mathcal{M}(\hat{\mathbb{R}}^2)}[0,\infty)$.

\subsubsection{Uniform control on the spatial extent of the population}
\label{4.1.2}

To prove that $(\mathcal{Z}^N)_{N \in \mathbb{N}}$ is tight in $D_{\mathcal{M}(\mathbb{R}^2)}[0,\infty)$ it suffices to prove that at any given time $t\geq 0$, no mass escapes to infinity as $N$ tends to infinity (see Corollary 3.9.3 in \cite{EK}). More precisely, we prove the following result.
\begin{Prop}\label{prop: mass escaping}
For every $t\geq 0$ and every $\varepsilon>0$, there exists $R>0$ such that
$$
\limsup_{N\rightarrow \infty} \mathbb{P}\left(\langle \mathcal{Z}_t^N,\mathds{1}_{\{|x| > R\}}\rangle > \varepsilon \right) \leq \varepsilon.
$$
\end{Prop}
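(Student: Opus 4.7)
The plan is to combine a smooth cutoff at infinity with the semimartingale decomposition of $\mathcal{Z}^N$. Fix a radial cutoff $\phi_R \in \mathcal{C}^2_b(\mathbb{R}^2)$ with $\mathds{1}_{\{|x|\ge R\}} \le \phi_R \le \mathds{1}_{\{|x|\ge R/2\}}$, $\|\nabla \phi_R\|_\infty \le C/R$ and $\|\Delta \phi_R\|_\infty \le C/R^2$. Since $\langle \mathcal{Z}^N_t, \mathds{1}_{\{|x|>R\}}\rangle \le \langle \mathcal{Z}^N_t, \phi_R\rangle$, by Markov's inequality it suffices to bound $\mathbb{E}[\langle \mathcal{Z}^N_t, \phi_R\rangle]$ uniformly in $N$ and to show that this bound vanishes as $R \to \infty$.

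To this end, I would apply to $f = \phi_R$ the semimartingale decomposition of $\mathcal{Z}^N$ written down at the start of Section~\ref{4.1.1}, take expectation, and drop the non-positive contribution $-d(x,\mathcal{Z}^N_s)\phi_R(x)$ to obtain an upper bound. By \eqref{L_1} and the derivative bounds on $\phi_R$, the drift integrand is bounded by $(C/R)\int_0^s h_1(s-r)\langle \mathcal{Z}^N_r,1\rangle\, dr$, the diffusion integrand by $C\sigma^2/R^2$, and the birth integrand by $(B_1+B_2 t)\phi_R(x)$. Integrating in $x$ and $s$, using Fubini and $h_1\in L^1$, and controlling the product $\mathbb{E}[\sup_{r\le t}\langle\mathcal{Z}^N_r,1\rangle \cdot \langle\mathcal{Z}^N_s,1\rangle]$ uniformly in $N$ by Cauchy--Schwarz together with the rescaled version of Lemma~\ref{Lem_ineq_Z2} (already invoked in Section~\ref{4.1.1}), I obtain, for every $s\in[0,t]$,
\[
\mathbb{E}[\langle \mathcal{Z}^N_s, \phi_R\rangle] \le \mathbb{E}[\langle \mathcal{Z}^N_0, \phi_R\rangle] + \frac{C_t}{R} + (B_1+B_2 t)\int_0^s \mathbb{E}[\langle \mathcal{Z}^N_r, \phi_R\rangle]\,dr,
\]
with $C_t$ independent of $N$ and $R$. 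Grönwall's lemma then yields
\[
\mathbb{E}[\langle \mathcal{Z}^N_t, \phi_R\rangle] \le \Bigl(\mathbb{E}[\langle \mathcal{Z}^N_0, \phi_R\rangle] + \tfrac{C_t}{R}\Bigr)\, e^{(B_1+B_2 t)t}.
\]

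To pass to the limit in $N$, I would use that $\mathcal{Z}^N_0 \to \mathcal{Z}_0$ in law together with $\sup_N\mathbb{E}[\langle\mathcal{Z}^N_0,1\rangle^2]<\infty$ makes $\{\langle\mathcal{Z}^N_0,\phi_R\rangle\}_N$ uniformly integrable, so $\mathbb{E}[\langle\mathcal{Z}^N_0,\phi_R\rangle] \to \langle \mathcal{Z}_0,\phi_R\rangle$ for each $R$, while $\langle\mathcal{Z}_0,\phi_R\rangle \to 0$ as $R\to\infty$ by finiteness of $\mathcal{Z}_0$ and dominated convergence. Given $\varepsilon>0$, choosing $R$ large enough that $(\langle\mathcal{Z}_0,\phi_R\rangle + C_t/R)\,e^{(B_1+B_2 t)t} \le \varepsilon^2$ and applying Markov's inequality delivers the claim. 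The main obstacle is that the individual drift $D_s(x):=\int_0^s\!\!\int L_{s-r}(x-y)\mathcal{Z}^N_r(dy)dr$ depends on the whole past $\{\mathcal{Z}^N_r : r\le s\}$ and that lateral branching constantly injects new mass at past positions of ancestors, so the spatial extent cannot be controlled by a snapshot estimate on $\langle\mathcal{Z}^N_s,1\rangle$ alone. The integrability of $h_1$ and the uniform $L^2$ bound on $\sup_{s\le t}\langle\mathcal{Z}^N_s,1\rangle$ are precisely the two ingredients that make the time-integrated drift contribute $O(1/R)$ rather than blow up, whereas lateral branching reduces at the generator level to the harmless term $b_2(x)(t-s)\phi_R(x)$ since the newborn shares the test point $x$ and $b_2\le B_2$.
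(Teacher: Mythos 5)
Your argument is correct, but it follows a genuinely different route from the paper. The paper does not test the semimartingale decomposition against a cutoff; instead it conditions on the event $\{\sup_{s\le t}\langle\mathcal{Z}^N_s,1\rangle\le A\}$, on which the time-integrated drift of every particle is bounded by $C_tA$, and then builds a pathwise coupling of $\mathcal{Z}^{(N)}$ with an auxiliary process $\widetilde{\mathcal{Z}}^{(N)}$ having $L\equiv 0$ and $d\equiv 0$, so that each apex position decomposes as $X^{N,u}_t=\widetilde{X}^{N,u}_t+(\text{drift})$ with $\widetilde{X}^{N,u}$ a Brownian motion run along the ancestral line; the tail is then estimated by summing Brownian exit probabilities $\widetilde{\psi}_{R,A}(t,\cdot)$ over a branching population, with an extra thinning argument to handle non-constant $b_1,b_2$. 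Your cutoff argument bypasses all of this: it only uses ingredients already established (the identity \eqref{eq_Z2} for $f\in\mathcal{C}^2_b$, the uniform bound on $\mathbb{E}[\sup_{s\le t}\langle\mathcal{Z}^N_s,1\rangle^2]$ from the rescaled Lemma~\ref{Lem_ineq_Z2}, and $h_1\in L^1$), and the decisive point — which you correctly identify — is that the lateral-branching compensator evaluates the test function at the \emph{past} position $x$ weighted by $\mathcal{Z}^N_s(dx)$, so it contributes $B_2t\int_0^t\langle\mathcal{Z}^N_s,\phi_R\rangle\,ds$ and closes the Gr\"onwall loop in $\langle\mathcal{Z}^N_\cdot,\phi_R\rangle$ itself; no tracking of where ancestors wandered in between is needed. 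Your handling of the initial condition (uniform integrability of $\langle\mathcal{Z}^N_0,\phi_R\rangle$ from the uniform second moment, then dominated convergence in $R$) mirrors the paper's final step. What the paper's coupling buys in exchange for its length is pathwise information on entire trajectories and ancestral lines (useful if one wanted tightness of a historical version of the process), whereas your estimate only controls the current apex positions — but that is exactly what Proposition~\ref{prop: mass escaping} asks for, so your proof is complete as stated.
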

\begin{proof}[Proof of Proposition~\ref{prop: mass escaping}]
We proceed in two steps. We start by considering the case where $b_1$ and $b_2$ are constant functions and using a fine coupling with a process with identical branching rates, $L\equiv 0$ and $d\equiv 0$ to control the spatial extent of $\mathcal{Z}^N_t$ uniformly in $N$. We then argue that the proof extends to the general case where $b_1$ and $b_2$ are bounded (by some constants $B_1$ and $B_2$) using a comparison argument that involves a simpler coupling with a process in which there are no deaths and the apical and lateral branching rates are respectively equal to the constants $B_1$ and $B_2$. The coupling is only used to control the total number of individuals in the time interval $[0,t]$, which in turn will control the absolute value of the drift experienced by each particle (and therefore its location at time $t$).

Thus, suppose as a start that the branching rates $b_1$ and $b_2$ are positive constants. Let $t \ge 0$, and let us first bound $\mathbb{E}\left[\langle \mathcal{Z}_t^N,\mathds{1}_{\{|x| > R\}}\rangle\right]$ uniformly in $N$, and then show that this bound goes to $0$ as $R$ tends to infinity. Using Markov's inequality, this will allow us to conclude.

Let $R > 0$ and $A > 0$. For every $N \ge 1$, we have
\begin{equation}
\label{ineq_AR}
    \mathbb{E}\left[\langle \mathcal{Z}_t^N,\mathds{1}_{\{|x| > R\}}\rangle\right] = \mathbb{E}\left[\langle \mathcal{Z}_t^N,\mathds{1}_{\{|x| > R\}}\rangle \mathds{1}_{\{\sup\limits_{s \le t}\langle  \mathcal{Z}_s^N,1 \rangle > A\}} + \langle \mathcal{Z}_t^N,\mathds{1}_{\{|x| > R\}}\rangle \mathds{1}_{\{\sup\limits_{s \le t}\langle  \mathcal{Z}_s^N,1 \rangle \le A\}}\right].
\end{equation}
By the Cauchy-Schwarz inequality, we have
\begin{equation}
\label{ineq_AR1}
    \mathbb{E}\left[\langle \mathcal{Z}_t^N,\mathds{1}_{\{|x| > R\}}\rangle \mathds{1}_{\{\sup\limits_{s \le t}\langle  \mathcal{Z}_s^N,1 \rangle > A\}}\right] \le \mathbb{E}\left[\langle \mathcal{Z}_t^N,1\rangle^2\right]^{\frac{1}{2}} \  \mathbb{P}\bigg(\sup\limits_{s \le t} \ \langle  \mathcal{Z}_s^N,1 \rangle > A\bigg)^{\frac{1}{2}}.
\end{equation}
Using the Markov inequality and \eqref{ineq_Z2}, we obtain that
\begin{equation*}
    \mathbb{P}\left(\sup\limits_{s \le t} \ \langle  \mathcal{Z}_s^N,1 \rangle > A\right) \le \frac{1}{A} \mathbb{E}\left[\sup\limits_{s \le t} \ \langle \mathcal{Z}_t^N,1\rangle^2\right]^{\frac{1}{2}} \le \frac{C_t}{A},
\end{equation*}
for a constant $C_t>0$ independent of $N$. Thus, combining \eqref{ineq_AR1} with \eqref{ineq_Z2}, we can write
\begin{equation*}
    \mathbb{E}\left[\langle \mathcal{Z}_t^N,\mathds{1}_{\{|x| > R\}}\rangle \mathds{1}_{\{\sup\limits_{s \le t}\langle  \mathcal{Z}_s^N,1 \rangle > A\}}\right] \le \frac{C'_t}{\sqrt{A}}.
\end{equation*}

This allows us to control the first term on the right hand side of \eqref{ineq_AR}. We now turn to the second term. For every $N\geq 1$, let
\begin{equation}
\big(\widetilde{\mathcal{Z}}_t^{(N)}\big)_{t\geq 0} = \Bigg( \sum_{u\in V^N_t(\widetilde{\mathcal{Z}})} \delta_{\widetilde{X}^{N,u}_t}\Bigg)_{t\geq 0} \label{base process}
\end{equation}
be the (unscaled) process corresponding to $L \equiv 0$ and $d \equiv 0$, using the same realisations of the individual Brownian motions and of the Poisson random measure $M$ as $\mathcal{Z}^{(N)}$ with the exception that death events are cancelled (that is, if $M(\{(s,u,a,\theta)\})=1$, $u\in V_{s-}^N(\widetilde{\mathcal{Z}})$ and $a\in (b_1,b_1+d(X^u_{s-},\mathcal{Z}^{(N)}_{s-}/N)]$, then nothing happens for $\widetilde{\mathcal{Z}}_s^{(N)}$).
Note that we write $d(X^u_{s-},\mathcal{Z}^{(N)}_{s-}/N)$ and not $d(X^u_{s-},\widetilde{\mathcal{Z}}^{(N)}_{s-}/N)$ here, in order to use the same indicator functions to decide for apical and lateral branching events in $\mathcal{Z}^{(N)}$ and in the subset of $\widetilde{\mathcal{Z}}^{(N)}$ that we couple with $\mathcal{Z}^{(N)}$ below.

We start by showing that for every $N\ge 1$, $t \ge 0$ and $u \in V^N_t$, we can write
\begin{equation}
\label{eq_XW}
    X^{N,u}_t = \widetilde{X}^{N,u}_t + \int_{I^{N,u}_t} \int_0^s\int_{\mathbb{R}^2}L_{s-r}(X_s^{N,u}-x)\mathcal{Z}^N_r(dx)drds,
\end{equation}
where $I^{N,u}_t$ is the set of times $s \le t$ at which individual $u$ is in $V^N_s(\mathcal{Z})$ or has an ancestor in $V^N_s(\mathcal{Z})$, and $X^{N,u}_s$ is the position of individual $u$ or the ancestor of $u$ alive at time $s \in I^{N,u}_t$.

We show by induction on $k \ge 1$ that for every $t \in [0,T^N_k(\mathcal{Z}))$, $V^N_t(\mathcal{Z})\subset V^N_t(\widetilde{\mathcal{Z}})$ and \eqref{eq_XW} holds for every $u \in V^N_t(\mathcal{Z})$. Notice that because $\widetilde{\mathcal{Z}}^{(N)}$ corresponds to the process with $d \equiv 0$, individuals that have died in the process $\mathcal{Z}^{(N)}$ can keep branching in the process $\widetilde{\mathcal{Z}}^{(N)}$. Thus, we need to relabel the individuals to keep track of the different lines of descent in order to establish a correspondence between the indices in $V^N_t(\mathcal{Z})$ and $V^N_t(\widetilde{\mathcal{Z}})$.

Let $k = 1$ and $t \in [T^N_0(\mathcal{Z}),T^N_1(\mathcal{Z}))$. The branching rates are constant in space so that an individual $u \in V^N_0$ branches for the process $\mathcal{Z}^{(N)}$ at time $t$ if and only if it branches for the process $\widetilde{\mathcal{Z}}^{(N)}$ at time~$t$. We then have that $V^N_t(\mathcal{Z}) = V^N_t(\widetilde{\mathcal{Z}}) = V^N_0$ for any $t \in [T^N_0(\mathcal{Z}),T^N_1(\mathcal{Z}))$ and $I^{N,u}_t = [0,t]$ for any $u \in V^N_0$. For every $u \in V^N_0$, we have
\begin{equation*}
    dX^{N,u}_t - \left(\int_0^t\int_{\mathbb{R}^2}L_{t-s}(X_t^{N,u}-x)\mathcal{Z}^N_s(dx)ds\right)dt = \sigma dW^u_t = d\widetilde{X}^{N,u}_t,
\end{equation*}
and $X^{N,u}_0 = \widetilde{X}^{N,u}_0$. Thus \eqref{eq_XW} holds for every $t \in [T^N_0(\mathcal{Z}),T^N_1(\mathcal{Z}))$ and every $u \in V^N_t(\mathcal{Z})$.

Let $k \ge 1$, and suppose that for every $t \in [0,T^N_k(\mathcal{Z}))$, $V^N_t(\mathcal{Z})\subset V^N_t(\widetilde{\mathcal{Z}})$ and \eqref{eq_XW} holds for every $u \in V^N_t(\mathcal{Z})$.

We have $V^N_{T_k-}(\mathcal{Z})\subset V^N_{T_k-}(\widetilde{\mathcal{Z}})$. If individual $U_k$ dies at time $T^N_k(\mathcal{Z})$ for the process $\mathcal{Z}^{(N)}$, $U^N_k$ is removed from $V^N_{T_k}(\mathcal{Z})$ but remains in $V^N_{T_k}(\widetilde{\mathcal{Z}})$. If individual $U^N_k$ branches at time $T^N_k(\mathcal{Z})$ for the process $\mathcal{Z}^{(N)}$, it also branches at time $T^N_k(\mathcal{Z})$ for the process $\widetilde{\mathcal{Z}}^{(N)}$. Thus, we have $V^N_{T_k}(\mathcal{Z})\subset V^N_{T_k}(\widetilde{\mathcal{Z}})$. Furthermore, no individual in $V^N_{T_k}(\mathcal{Z})$ branches or dies during $(T^N_k(\mathcal{Z}),T^N_{k+1}(\mathcal{Z}))$ for either processes $\mathcal{Z}^{(N)}$ or $\widetilde{\mathcal{Z}}^{(N)}$, so that for every $t \in [T^N_k(\mathcal{Z}),T^N_{k+1}(\mathcal{Z}))$, $V^N_t(\mathcal{Z})\subset V^N_t(\widetilde{\mathcal{Z}})$. For every $t \in [T^N_k(\mathcal{Z}),T^N_{k+1}(\mathcal{Z}))$ and $u \in V^N_t(\mathcal{Z})$, we have
\begin{equation*}
    dX^{N,u}_t - \left(\int_0^t\int_{\mathbb{R}^2}L_{t-s}(X_t^{N,u}-x)\mathcal{Z}^N_s(dx)ds\right)dt = \sigma dW^u_t = d\widetilde{X}^{N,u}_t.
\end{equation*}
Let $t \in [T^N_k(\mathcal{Z}),T^N_{k+1}(\mathcal{Z}))$ and $u \in V^N_t(\mathcal{Z})$. There are three possible cases:
\begin{itemize}
    \item[--] $u$ is in $V^N_{T_{k-1}}(\mathcal{Z})$ and we have $X^{N,u}_{T^N_k(\mathcal{Z})} = X^{N,u}_{T^N_k(\mathcal{Z})-}$, $\widetilde{X}^{N,u}_{T^N_k(\mathcal{Z})} = \widetilde{X}^{N,u}_{T^N_k(\mathcal{Z})-}$ and $I^{N,u}_t = I^{N,u}_{T^N_k(\mathcal{Z})-} \cup [T^N_k(\mathcal{Z}),t]$.
    \item[--] $u$ is born at time $T^N_k(\mathcal{Z})$ from apical branching (for both processes $\mathcal{Z}^N$ and $\widetilde{\mathcal{Z}}^N$) and we have $X^{N,u}_{T^N_k(\mathcal{Z})} = X^{N,U^N_k}_{T^N_k(\mathcal{Z})-}$ and $\widetilde{X}^{N,u}_{T^N_k(\mathcal{Z})} = \widetilde{X}^{N,U^N_k}_{T^N_k(\mathcal{Z})-}$, and $I^{N,u}_t = I^{N,U^N_k}_{T^N_k(\mathcal{Z})-} \cup [T^N_k(\mathcal{Z}),t]$.
    \item[--] $u$ is born at time $T^N_k(\mathcal{Z})$ from lateral branching and we have $X^{N,u}_{T^N_k(\mathcal{Z})} = X^{N,U^N_k}_{\theta^N_k T^N_k(\mathcal{Z})}$, $\widetilde{X}^{N,u}_{T^N_k(\mathcal{Z})} = \widetilde{X}^{N,U^N_k}_{\theta^N_k T^N_k(\mathcal{Z})}$, and $I^{N,u}_t = I^{N,U^N_k}_{\theta^N_k T^N_k(\mathcal{Z})} \cup [T^N_k(\mathcal{Z}),t]$.
\end{itemize}
In each case, using the induction hypothesis, we can write
\begin{equation*}
    X^{N,u}_{T^N_k(\mathcal{Z})} - \int_{I^{N,u}_{T^N_k}} \int_0^s\int_{\mathbb{R}^2}L_{s-r}(X_s^{N,u}-x)\mathcal{Z}^N_r(dx)drds = \widetilde{X}^{N,u}_{T_k(\mathcal{Z})}.
\end{equation*}
Thus for every $t \in [T^N_k(\mathcal{Z}),T^N_{k+1}(\mathcal{Z}))$ and every $u \in V^N_t(\mathcal{Z})$, Equation \eqref{eq_XW} holds.

We now use \eqref{eq_XW} to control the term
$$
\mathbb{E}\Big[\big\langle \mathcal{Z}_t^N,\mathds{1}_{\{|x| > R\}}\big\rangle\, \mathds{1}_{\{\sup_{s \le t}\langle  \mathcal{Z}_s^N,1 \rangle \le A\}}\Big].$$
On the event $\{\sup_{s \le t} \ \langle  \mathcal{Z}_s^N,1 \rangle \le A\}$, we have
\begin{equation*}
    \left| \int_{I^u_t} \int_0^s\int_{\mathbb{R}^2}L_{s-r}(X_s^{N,u}-x)\mathcal{Z}^N_r(dx)drds\right| \le A \int_0^t \int_0^s h_1(s-r) dsdr \le C_t A.
\end{equation*}
Thus, for every $u \in V^N_t(\mathcal{Z})$,
\begin{equation*}
    |X^{N,u}_t| \le |\widetilde{X}^{N,u}_t| + C_tA,
\end{equation*}
and, using the fact that $V^N_t(\mathcal{Z}) \subset V^N_t(\widetilde{\mathcal{Z}})$, we obtain
\begin{equation*}
    \mathbb{E}\left[\langle \mathcal{Z}_t^N,\mathds{1}_{\{|x| > R\}}\rangle \mathds{1}_{\{\sup\limits_{s \le t}\langle  \mathcal{Z}_s^N,1 \rangle \le A\}}\right] \le \mathbb{E}\left[ \langle \widetilde{\mathcal{Z}}_t^N, \mathds{1}_{\{|x| > R-C_tA\}} \rangle  \right],
\end{equation*}
where $\widetilde{\mathcal{Z}}^N$ is the scaled process $(1/N)\widetilde{\mathcal{Z}}^{(N)}$. Now, we have
\begin{align*}
    \mathbb{E}\left[ \langle \widetilde{\mathcal{Z}}_t^N, \mathds{1}_{\{|x| > R-C_tA\}} \rangle  \right] & = \frac{1}{N} \, \mathbb{E}\left[ \sum\limits_{u \in V^N_t(\widetilde{\mathcal{Z}})} \mathds{1}_{\{|\widetilde{X}^{N,u}_t| > R-C_tA\}}  \right]\\
    & = \frac{1}{N} \, \mathbb{E}\left[\mathbb{E}\left[ \sum\limits_{u \in V^N_t(\widetilde{\mathcal{Z}})} \mathds{1}_{\{|\widetilde{X}^{N,u}_t| > R-C_tA\}} \Bigg| M, \mathcal{Z}^N_0 \right]\right] \\
    & = \frac{1}{N} \, \mathbb{E}\left[\sum\limits_{u \in V^N_t(\widetilde{\mathcal{Z}})} \mathbb{E}\left[  \mathds{1}_{\{|\widetilde{X}^{N,u}_t| > R-C_tA\}} \Big| M, \mathcal{Z}^N_0 \right]\right].
\end{align*}
For every $s\geq 0$ and $x\in \mathbb{R}^2$, let
\begin{equation*}
    \psi_{R,A}(s,x) = \mathbb{P}\left(|B_s + x| > R-C_tA\right),
\end{equation*}
and
\begin{equation*}
    \widetilde{\psi}_{R,A}(s,x) = \mathbb{P}\left(\sup\limits_{0 \le r \le s}|B_r + x| > R-C_tA\right),
\end{equation*}
where $(B_s)_{s \ge 0}$ denotes Brownian motion with variance matrix $\sigma^2\mathrm{Id}$. We can write
\begin{equation*}
    \mathbb{E}\left[  \mathds{1}_{\{|\widetilde{X}^{N,u}_t| > R-C_tA\}} \left| M, \mathcal{Z}^N_0 \right.\right] = \psi_{R,A}(T^{N,u}_t,X^{N,u}_0) \le \widetilde{\psi}_{R,A}(t,X^{N,u}_0),
\end{equation*}
where $T^{N,u}_t$ is the length of $I^{N,u}_t$ and $X^{N,u}_0$ is the ancestor of $u$ alive at time $0$. We then have
\begin{equation*}
     \mathbb{E}\left[ \langle \widetilde{\mathcal{Z}}_t^N, \mathds{1}_{\{|x| > R-C_tA\}} \rangle  \right] \le \frac{1}{N} \, \mathbb{E}\left[\sum\limits_{u \in V^N_t(\widetilde{\mathcal{Z}})} \widetilde{\psi}_{R,A}(t,X^{N,u}_0) \right].
\end{equation*}
Let $V^{N,u}_t \subset V^N_t(\widetilde{\mathcal{Z}})$ be the index set of all individuals alive at time $t$ for the process $\widetilde{\mathcal{Z}}^N$ who are descended from $u \in V^N_0$. We can write
\begin{align*}
    \frac{1}{N} \, \mathbb{E}\left[\sum\limits_{u \in V^N_t(\widetilde{\mathcal{Z}})} \widetilde{\psi}_{R,A}(t,X^{N,u}_0) \right] &= \frac{1}{N} \, \mathbb{E}\left[\sum\limits_{u \in V^N_0} \sum\limits_{v \in V^{N,u}_t} \widetilde{\psi}_{R,A}(t,X^{N,u}_0) \right] \\
    &= \frac{1}{N} \,  \mathbb{E}\left[\sum\limits_{u \in V^N_0} \left(\# {V}^{N,u}_t \right) \ \widetilde{\psi}_{R,A}(t,X^{N,u}_0) \right],
\end{align*}
where $\left(\# {V}^{N,u}_t \right)$ is the number of individuals in $V^{N,u}_t$. We then have, using \eqref{ineq_Z},
\begin{align}
    \mathbb{E}\left[ \langle \widetilde{\mathcal{Z}}_t^N, \mathds{1}_{\{|x| > R-C_tA\}} \rangle  \right] & \le \frac{1}{N} \, \mathbb{E}\left[ \mathbb{E}\left[\sum\limits_{u \in V^N_0} \left(\# {V}^{N,u}_t\right) \ \widetilde{\psi}_{R,A}(t,X^{N,u}_0) \Bigg| \mathcal{Z}^N_0 \right]\right] \nonumber \\
    & \le  \frac{1}{N} \, \mathbb{E}\left[ \sum\limits_{u \in V^N_0}\mathbb{E}\left[ \# {V}^{N,u}_t  \Big| \mathcal{Z}^N_0  \right] \widetilde{\psi}_{R,A}(t,X^{N,u}_0) \right] \nonumber\\
    & \le  e^{B_1t + \frac{B_2}{2}t^2} \mathbb{E}\left[\langle \mathcal{Z}^N_0, \widetilde{\psi}_{R,A}(t,.) \rangle\right]. \label{control brownian process}
\end{align}

Coming back to \eqref{ineq_AR} and using the different estimates we have derived, we obtain that for every $N \ge 1$,
\begin{equation}\label{control radius}
    \mathbb{E}\left[\langle \mathcal{Z}_t^N,\mathds{1}_{\{|x| > R\}}\rangle\right] \le \frac{C_t}{\sqrt{A}} + e^{B_1t + \frac{B_2}{2}t^2} \mathbb{E}\left[\langle \mathcal{Z}^N_0, \widetilde{\psi}_{R,A}(t,.)\rangle\right].
\end{equation}

Since by assumption, $(\mathcal{Z}_0^N)_{N\geq 1}$ converges in distribution to the deterministic measure $\mathcal{Z}_0\in \mathcal{M}(\mathbb{R}^2)$ and since each $\widetilde{\psi}_{R,A}(t,.)$ is a bounded continuous function on $\mathbb{R}^2$, we can write that
$$
\limsup_{N\rightarrow \infty}\mathbb{E}\left[\langle \mathcal{Z}_0^N, \widetilde{\psi}_{R,A}(t,.) \rangle\right] =  \langle \mathcal{Z}_0, \widetilde{\psi}_{R,A}(t,.)\rangle
$$
which, combined with \eqref{control radius}, gives us
$$
\limsup_{N\rightarrow \infty} \mathbb{E}\left[\langle \mathcal{Z}_t^N,\mathds{1}_{\{|x| > R \}}\rangle\right] \le  \frac{C_t}{\sqrt{A}} + e^{B_1t + \frac{B_2}{2}t^2}\langle \mathcal{Z}_0, \widetilde{\psi}_{R,A}(t,.)\rangle.
$$
Furthermore, for any given $A > 0$, we have for every $x \in \mathbb{R}^2$, $\lim_{R \to +\infty}\widetilde{\psi}_{R,A}(t,x) = 0$. Additionally, for every $R > 0$, $\|\widetilde{\psi}_{R,A}(t,.)\|_{\infty} \le 1$. Since $\mathcal{Z}_0$ puts all its mass in $\mathbb{R}^2$, by dominated convergence we obtain that
$$
\lim_{R \to \infty} \langle \mathcal{Z}_0, \widetilde{\psi}_{R,A}(t,.) \rangle = 0.
$$

We can now conclude in the case of constant $b_1,b_2$. Indeed, let $\eta > 0$, and let $A > 0$ be such that $C_t/\sqrt{A} < \eta/2$. Then there exists $R_0$ large enough such that for every $R > R_0$,
\begin{equation}\label{constant case}
\limsup_{N\rightarrow \infty} \mathbb{E}\left[\langle \mathcal{Z}_t^N,\mathds{1}_{\{|x| > R \}}\rangle\right] \leq  \eta,
\end{equation}
and the result of Proposition~\ref{prop: mass escaping} follows from the Markov inequality.

Now that we have proved the result for $b_1$ and $b_2$ constant, let us come back to the general case. Recall that $b_1$ (\emph{resp.}, $b_2$) is supposed to be bounded by a constant $B_1$ (\emph{resp.}, $B_2$). Because the branching rates now depend on space and individual movements \emph{a priori} depend on the whole history of particle locations, we cannot write down a fine coupling of the process $\mathcal{Z}^N$ with the process $\widetilde{\mathcal{Z}}^N$ corresponding to the case with $d\equiv 0$, $L\equiv 0$ and with apical branching at rate $B_1$ and lateral branching at rate proportional to $B_2$ in such a way that the creation of new apexes happens in the same relative order in $\mathcal{Z}^N$ as in $\widetilde{\mathcal{Z}}^N$ and the trajectory of $\widetilde{X}^{N,u}$ can be used as the noise part of the trajectory of $X^{N,u}$ (for the same $u$). Indeed, the recursive construction provided above fails due to the fact that the total branching rate of an individual alive at time $t$ is not the same in both processes, and so branching events in $\mathcal{Z}^N$ are only a subset of branching events in $\widetilde{\mathcal{Z}}^N$, whose occurrence are dependent on the position of the corresponding individuals (so that branching and individual movement cannot be simply decoupled as in $\widetilde{\mathcal{Z}}^N$). However, we only need a weaker comparison argument to conclude. Indeed, by a classical thinning argument, we can construct $\mathcal{Z}^N$ and $\widetilde{\mathcal{Z}}^N$ on the same probability space in such a way that, with probability one, the branching events occurring in $(\mathcal{Z}^N_s)_{0\leq s\leq t}$ form a subset of the branching events occurring in $(\widetilde{\mathcal{Z}}^N_s)_{0\leq s\leq t}$ (our labelling of new particles is not adapted for this coupling, and therefore we cannot easily describe the label set of each $\mathcal{Z}^N_s$ as a well-characterised subset of the label set of $\widetilde{\mathcal{Z}}^N_s$). Since individuals cannot die in $\widetilde{\mathcal{Z}}^N$, this means that the total number of individuals in $\mathcal{Z}^N_t$ is a.s. bounded from above by the total number of individuals in $\widetilde{\mathcal{Z}}^N_t$, although their locations in $\mathbb{R}^2$ are \emph{a priori} different. Now, the drift experienced by each individual in $\mathcal{Z}^N$ alive at some point in $[0,t]$ is bounded in absolute value by $\sup_{[0,t]}h_1$ times the total mass of $\widetilde{\mathcal{Z}}^N$ at time $t$. Therefore, we can now follow the exact same reasoning as in the case of constant $b_1$ and $b_2$, starting at~\eqref{ineq_AR}. First, the inequality
$$
\mathbb{P}\bigg(\sup_{s\in [0,t]} \langle \mathcal{Z}^N_s,1\rangle >A\bigg) \leq \mathbb{P}\bigg(\sup_{s\in [0,t]} \langle \widetilde{\mathcal{Z}}^N_s,1\rangle >A\bigg)
$$
together with \eqref{ineq_AR1} allows us to control the first part of the expectation in the r.h.s. of \eqref{ineq_AR}. Next, the fact that on the event $\{\sup_{s\in [0,t]} \langle \mathcal{Z}^N_s,1\rangle \leq A\}$, each of the at most $A$ particles in encoded in $\mathcal{Z}^N_t$ has followed a path with drift bounded by $\sup_{[0,t]}h_1\times A$ allows us to bound the expected number of particles whose position at time $t$ is at distance at least $R$ from the origin in the same way as in \eqref{control brownian process}. Combining the two steps, we obtain a formula analogous to \eqref{control radius}, and we can then conclude in the same way.
\end{proof}

As explained before, we have now proved all the properties needed to show the tightness of $(\mathcal{Z}^N)_{N \ge 1}$ in $D_{\mathcal{M}(\mathbb{R}^2)}[0,\infty)$.

\subsection{Characterisation of the limit}
\label{4.2}
We prove the following result.
\begin{Prop}
Any converging subsequence of $(\mathcal{Z}^N)_{N \in \mathbb{N}}$ converges in law to a continuous process $\mathcal{Z}^\infty$ which is solution to: for every $f \in \mathcal{C}_b^2(\mathbb{R}^2)$ and $t \ge 0$,
\begin{align*}
    \langle \mathcal{Z}_t^\infty,f \rangle - \langle \mathcal{Z}_0^\infty,f \rangle - \int_0^t\int_{\mathbb{R}^2} \Bigg[ &\sum\limits_{i=1}^2 \Bigg(\frac{\partial f}{\partial x_i}(x) \int_0^s\int_{\mathbb{R}^2} L^i_{s-r}(x-y) \mathcal{Z}_r^\infty(dy)dr + \frac{\sigma^2}{2} \frac{\partial^2 f}{\partial x_i^2}(x)\Bigg)\\
    & + (b_1(x)+b_2(x)(t-s)-d(x,\mathcal{Z}_s^\infty))f(x) \Bigg] \mathcal{Z}_s^\infty(dx)ds = 0.
\end{align*}
\label{Prop_characterisation}
\end{Prop}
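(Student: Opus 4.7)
The plan is to exploit the semi-martingale decomposition \eqref{eq_Z2} satisfied by $\mathcal{Z}^N$: after rescaling by $1/N$, both the martingale part and the compensated Poisson integral have $L^2$-norms of order $1/\sqrt{N}$, so that $\mathbb{E}[|\Phi^t_f(\mathcal{Z}^N)|]\to 0$, where $\Phi^t_f(\nu)$ denotes the left-hand side of the equation that appears in the statement of Proposition~\ref{Prop_characterisation}, seen as a functional of a generic path $\nu\in D_{\mathcal{M}(\mathbb{R}^2)}[0,\infty)$. If one can further show that $\Phi^t_f$ is continuous at continuous paths of $D_{\mathcal{M}(\mathbb{R}^2)}[0,\infty)$, then passing to an almost surely converging realisation of a subsequence via the Skorokhod representation theorem will yield $\Phi^t_f(\mathcal{Z}^\infty)=0$ almost surely, for each fixed pair $(f,t)$.

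First, I would extract a converging subsequence $(\mathcal{Z}^{N_k})_{k\geq 1}$ from the tightness result of Section~\ref{4.1} and observe that its limit almost surely has continuous paths, since every jump of $\mathcal{Z}^N$ has total variation exactly $1/N$. Invoking the Skorokhod representation theorem, one may work on a probability space where convergence in $D_{\mathcal{M}(\mathbb{R}^2)}[0,\infty)$ holds almost surely; combined with path continuity of the limit, this yields $\mathcal{Z}^{N_k}_s\to\mathcal{Z}^\infty_s$ weakly for every $s\geq 0$, almost surely. The bounds \eqref{ineq_2} and \eqref{ineq_3} already established in Section~\ref{4.1.1} then show that the quadratic variations of the rescaled martingale $\frac{1}{N}M^{N,f}_{0,\cdot}$ and of the rescaled compensated Poisson integral are both of order $1/N$, giving in particular that $\mathbb{E}[|\Phi^t_f(\mathcal{Z}^{N_k})|]\to 0$.

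The main obstacle is to establish continuity of the nonlinear parts of $\Phi^t_f$ along the subsequence. For the convolution against $L$, Assumption~\eqref{L_2} ensures that $y\mapsto L^i_{s-r}(x-y)$ is bounded and continuous for each fixed $(s,r,x)$ with $r<s$, so that weak convergence of $\mathcal{Z}^{N_k}_r$ gives pointwise convergence of the inner integral; combined with $|L_t(x)|\leq h_1(t)$ and the uniform second moment bound \eqref{ineq_Z2}, dominated convergence then handles the outer integration in $s$ and against $\mathcal{Z}^{N_k}_s(dx)$. For the death term, I would add and subtract $d(x,\mathcal{Z}^\infty_s)$: Assumption~\eqref{d_2} bounds one piece of the discrepancy by $\|f\|_\infty\,\langle\mathcal{Z}^{N_k}_s,1\rangle\,|\langle \mathcal{Z}^{N_k}_s-\mathcal{Z}^\infty_s,\Psi\rangle|$, and the absolute continuity of $\mathcal{Z}^\infty_s$ proved in Section~\ref{4.4} allows one to pass to the limit via a Portmanteau-type argument applied to the bounded measurable function $\Psi$; the remaining piece, $\int d(x,\mathcal{Z}^\infty_s)f(x)(\mathcal{Z}^{N_k}_s-\mathcal{Z}^\infty_s)(dx)$, is handled by the same weak-convergence-plus-absolute-continuity argument applied to $x\mapsto d(x,\mathcal{Z}^\infty_s)f(x)$.

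Combining these steps, $\Phi^t_f(\mathcal{Z}^{N_k})\to\Phi^t_f(\mathcal{Z}^\infty)$ almost surely, while $\mathbb{E}[|\Phi^t_f(\mathcal{Z}^{N_k})|]\to 0$; uniform integrability, provided by the uniform bound \eqref{ineq_Z2} on $\mathbb{E}[\sup_{s\leq T}\langle\mathcal{Z}^N_s,1\rangle^2]$, upgrades these into $\Phi^t_f(\mathcal{Z}^\infty)=0$ almost surely, for each fixed $(f,t)$. Determinism of $\mathcal{Z}^\infty$ then follows from the uniqueness of the solution to the limiting equation proved in Section~\ref{subs:uniqueness}, together with the determinism of the initial condition $\mathcal{Z}_0$.
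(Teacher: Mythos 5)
Your overall architecture coincides with the paper's: introduce the functional $\phi_t^f$ given by the left-hand side of the equation, use the semimartingale decomposition \eqref{eq_Z2} together with the bounds \eqref{ineq_2} and \eqref{ineq_3} to get $\mathbb{E}\big[\phi_t^f(\mathcal{Z}^N)^2\big]\le C_t\|f\|_{2,\infty}/N\to 0$, observe that any limit point has continuous paths because each jump of $\mathcal{Z}^N$ has total mass $1/N$, deduce $\phi_t^f(\mathcal{Z}^{N_k})\to\phi_t^f(\mathcal{Z}^\infty)$ from continuity of $\phi_t^f$ at continuous paths, and conclude by uniform integrability (the paper works directly with convergence in law rather than through a Skorokhod representation, but that difference is immaterial). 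The concluding appeal to Section~\ref{subs:uniqueness} for determinism is also consistent with the paper's organisation.

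There is, however, one genuine problem in your justification of the continuity step, specifically for the death term. You invoke the absolute continuity of $\mathcal{Z}^\infty_s$ ``proved in Section~\ref{4.4}'' to pass to the limit in $\langle\mathcal{Z}^{N_k}_s-\mathcal{Z}^\infty_s,\Psi\rangle$. This is circular: the absolute continuity of $\mathcal{Z}^\infty_t$ is derived in Section~\ref{4.4} \emph{from} the limiting equation \eqref{eq_Zlim}, i.e.\ from the very identity that Proposition~\ref{Prop_characterisation} establishes, so it cannot be used as an input here. Moreover, even granting absolute continuity, a Portmanteau-type argument only yields $\langle\mu_n,\Psi\rangle\to\langle\mu,\Psi\rangle$ when the set of discontinuity points of $\Psi$ is null for the limit measure; for a general bounded \emph{measurable} $\Psi$ as in Assumption \eqref{d_2} this set can have positive Lebesgue measure, so absolute continuity with respect to Lebesgue measure does not rescue the argument (the same caveat applies to the terms involving $b_1$, $b_2$ and $x\mapsto d(x,\mathcal{Z}^\infty_s)f(x)$, which are only assumed measurable). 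The continuity of $\phi_t^f$ at continuous paths, which the paper asserts and which your argument also ultimately rests on, must therefore be obtained without this detour --- in practice by requiring enough continuity of $b_1$, $b_2$, $\Psi$ and $d(\cdot,\nu)$ so that the dominated-convergence argument you correctly carry out for the $L$-term applies to the remaining terms as well. Apart from this point, your proposal is sound and matches the paper's proof.
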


\begin{proof}[Proof of Proposition~\ref{Prop_characterisation}]
For $t \ge 0$, $f\in \mathcal{C}_b^2(\mathbb{R}^2)$ and $\nu \in D_{\mathcal{M}(\mathbb{R}^2)}[0,\infty)$, let us define
\begin{align*}
    \phi_t^f(\nu) = \langle \nu_t,f \rangle - \langle \nu_0,f \rangle - \int_0^t\int_{\mathbb{R}^2} \Bigg[&\sum\limits_{i=1}^2 \left(\frac{\partial f}{\partial x_i}(x) \int_0^s\int_{\mathbb{R}^2} L^i_{s-r}(x-y) \nu_r(dy)dr + \frac{\sigma^2}{2} \frac{\partial^2 f}{\partial x_i^2}(x)\right) \\
    &+ (b_1(x)+b_2(x)(t-s)-d(x,\nu_s))f(x) \Bigg] \nu_s(dx)ds.
\end{align*}
We shall show that for every $f\in \mathcal{C}_b^2(\mathbb{R}^2)$ and $t \ge 0$,
\begin{equation}\label{convergence1}
   \lim\limits_{N \to \infty} \mathbb{E}\left[|\phi_t^f(\mathcal{Z}^N)|\right] = \mathbb{E}\left[|\phi_t^f(\mathcal{Z}^\infty)|\right],
\end{equation}
and
\begin{equation}\label{convergence2}
    \lim\limits_{N \to \infty} \mathbb{E}\left[|\phi_t^f(\mathcal{Z}^N)|\right] = 0,
\end{equation}
so that $\mathbb{E}\left[|\phi_t^f(\mathcal{Z}^\infty)|\right] = 0$.

Let thus $f\in \mathcal{C}_b^2(\mathbb{R}^2)$. The function $(\nu \mapsto \phi_t^f(\nu))$ is not continuous on $D_{\mathcal{M}(\mathbb{R}^2)}[0,\infty)$ for the Skorokhod topology. However, Assumption~\eqref{L_2} and the continuity of $b_1$, $b_2$ and $d$ ensure that it is continuous in $\nu$ on $C([0,\infty),\mathcal{M}(\mathbb{R}^2))$, the space of continuous trajectories for the weak topology on $\mathcal{M}(\mathbb{R}^2)$. By construction, almost surely, we have for every $T \ge 0$,
\begin{equation*}
    \sup\limits_{t \le T} \ \sup\limits_{\|f\|_{\infty} \le 1}  \left| \langle \mathcal{Z}_t^N - \mathcal{Z}_{t-}^N ,f \rangle \right| \le \frac{1}{N},
\end{equation*}
so that any limit point $\mathcal{Z}^\infty$ is almost surely continuous and so $\phi_t^f$ is almost surely continuous in $\mathcal{Z}^\infty$. Hence $(\phi_t^f(\mathcal{Z}^N))_{N \ge 1}$ converges in law to $\phi_t^f(\mathcal{Z}^\infty)$. Furthermore, we have, for every $\nu \in D_{\mathcal{M}(\mathbb{R}^2)}[0,\infty)$,
\begin{equation*}
    | \phi_t^f(\nu) | \le C_{t,f} \sup\limits_{s\le t} \left(\langle \nu_s,1 \rangle +\langle \nu_s,1 \rangle ^2\right),
\end{equation*}
and by \eqref{ineq_Z} and \eqref{ineq_2}
\begin{equation*}
    \mathbb{E}\left[\sup\limits_{s\le t} \left(\langle \mathcal{Z}^N_s,1 \rangle +\langle \mathcal{Z}^N_s,1 \rangle ^2\right)\right] \le C_te^{3(B_1t+\frac{B_2}{2}t^2)}.
\end{equation*}
We can thus conclude that for any given $t\geq 0$, $(\phi_t^f(\mathcal{Z}^N))_{N \in \mathbb{N}} $ is uniformly integrable and
\begin{equation*}
    \lim\limits_{N \to \infty} \mathbb{E}\left[|\phi_t^f(\mathcal{Z}^N)|\right] = \mathbb{E}\left[|\phi_t^f(\mathcal{Z}^\infty)|\right],
\end{equation*}
showing~\eqref{convergence1}.

Let us now prove~\eqref{convergence2}. Using \eqref{eq_Z2}, we can write
\begin{align*}
   \phi_t^f(\mathcal{Z}^N) &= \frac{1}{N} M_{0,t}^{N,f} \\
     &+  \frac{1}{N} \int_{[0,t] \times \mathbb{N} \times \mathbb{R}_+\times (0,1)}\hspace{-5pt} \Big((\mathds{1}_{\{u\in V_{s-},a \le b_1(X^{N,u}_{s-})\}}-\mathds{1}_{\{u\in V_{s-},b_1(X^{N,u}_{s-}) < a \le b_1(X^{N,u}_{s-})+d(X^{N,u}_{s-},\mathcal{Z}^N_{s-})\}}) f(X^{N,u}_{s-})\\
    & \hspace{3.4cm} + \mathds{1}_{\{u\in V_{\theta s},b_1(X^{N,u}_{s-})+d(X^{N,u}_{s-},\mathcal{Z}^N_{s-}) < a \le b_1(X^{N,u}_{s-})+b_2(X^{N,u}_{\theta s})s+d(X^{N,u}_{s-},\mathcal{Z}^N_{s-})\}} f(X^{N,u}_{\theta s}) \Big)\\
    & \hspace{9cm}\times (M(ds,du,da,d\theta)-ds\, n(du)\, da\, d\theta).
\end{align*}
Using \eqref{ineq_2} and \eqref{ineq_3} with $\tau_N = 0$ and $\delta = t$, we have
\begin{equation*}
    \mathbb{E}\left[\phi_t^f(\mathcal{Z}^N)^2\right] \le \frac{C_t}{N} \|f\|_{2,\infty}  \xrightarrow[N \to \infty]{} 0,
\end{equation*}
the Cauchy-Schwarz inequality allows us to conclude.
We have thus proved that for every $f \in \mathcal{C}_b^2(\mathbb{R}^2)$ and $t \ge 0$,
\begin{equation*}
    \mathbb{E}\left[|\phi_t^f(\mathcal{Z}^\infty)|\right]=0,
\end{equation*}
and $\mathcal{Z}^\infty$ satisfies
\begin{align*}
    &\langle \mathcal{Z}_t^\infty,f \rangle - \langle \mathcal{Z}_0^\infty,f \rangle - \int_0^t\int_{\mathbb{R}^2} \Bigg[\sum\limits_{i=1}^2 \Bigg(\frac{\partial f}{\partial x_i}(x) \int_0^s\int_{\mathbb{R}^2} L^i_{s-r}(x-y) \mathcal{Z}_r^\infty(dy)dr + \frac{\sigma^2}{2} \frac{\partial^2 f}{\partial x_i^2}(x)\Bigg)\\
    & \hspace{5cm}+ (b_1(x)+b_2(x)(t-s)-d(x,\mathcal{Z}_s^\infty))f(x) \Bigg] \mathcal{Z}_s^\infty(dx)ds = 0.
\end{align*}
\end{proof}

Using the same arguments, we can show that for any $f \in \mathcal{C}_b^{1,2}([0,T] \times  \mathbb{R}^2$) and $t \ge 0$,
\begin{equation*}
\begin{aligned}
    \langle \mathcal{Z}_t^\infty,f_t \rangle - \langle \mathcal{Z}_0^\infty,f_0 \rangle - \int_0^t\int_{\mathbb{R}^2} \Bigg[\frac{\partial f_s}{\partial s} & + \sum\limits_{i=1}^2 \left(\frac{\partial f_s}{\partial x_i}(x) \int_0^s\int_{\mathbb{R}^2} L^i_{s-r}(x-y) \mathcal{Z}_r^\infty(dy)dr + \frac{\sigma^2}{2} \frac{\partial^2 f_s}{\partial x_i^2}(x)\right) \\
    & + (b_1(x)+b_2(x)(t-s)-d(x,\mathcal{Z}_s^\infty))f_s(x) \Bigg] \mathcal{Z}_s^\infty(dx)ds = 0.
\end{aligned}
\end{equation*}
This generalisation will prove useful in the proof of uniqueness of the limit.

\subsection{Uniqueness of the limit}\label{subs:uniqueness}

Let us now prove the uniqueness in $C([0,\infty),\mathcal{M}(\mathbb{R}^2))$ of the solution to: for every $T\ge 0$, $f \in \mathcal{C}_b^{1,2}([0,T],\mathbb{R}^2)$ and $0\le t \le T$,
\begin{equation}
\label{eq_Zlim_s}
\begin{aligned}
    \langle \mathcal{Z}_t^\infty,f_t \rangle - \langle \mathcal{Z}_0^\infty,f_0 \rangle - \int_0^t\int_{\mathbb{R}^2} \Bigg[\frac{\partial f_s}{\partial s} & + \sum\limits_{i=1}^2 \left(\frac{\partial f_s}{\partial x_i}(x) \int_0^s\int_{\mathbb{R}^2} L^i_{s-r}(x-y) \mathcal{Z}_r^\infty(dy)dr + \frac{\sigma^2}{2} \frac{\partial^2 f_s}{\partial x_i^2}(x)\right) \\
    & + (b_1(x)+b_2(x)(t-s)-d(x,\mathcal{Z}_s^\infty))f_s(x) \Bigg] \mathcal{Z}_s^\infty(dx)ds = 0.
\end{aligned}
\end{equation}
Here again, the technique we employ is classical and based on the mild form of the equation, but we give the details of all the required controls (and in particular those of the additional terms due to lateral branching, which have a less standard form).

Let $\mathcal{Z},\widetilde{\mathcal{Z}} \in C([0,\infty),\mathcal{M}(\mathbb{R}^2))$ satisfying \eqref{eq_Zlim_s} and such that $\mathcal{Z}_0 = \widetilde{\mathcal{Z}}_0$ (we drop the superscript $\infty$ to ease the reading, with the understanding that, in this section, the process $\mathcal{Z}$ is not the one defined in \eqref{state at t}). We shall show that for every $t \ge 0$,
\begin{equation}
    \sup\limits_{s\le t} \sup\limits_{\| \phi \|_{\infty}\le 1} |\langle \mathcal{Z}_s - \widetilde{\mathcal{Z}}_s, \phi \rangle| = 0,
\end{equation}
where the supremum over $\| \phi \|_{\infty}\le 1$ is taken over the set of all bounded measurable functions on $\mathbb{R}^2$.

We start by bounding $\langle \mathcal{Z}_t,1 \rangle$ for every $t \ge 0$ and $\mathcal{Z}$ satisfying \eqref{eq_Zlim_s}. For $f \equiv 1$, we have for every $t \ge 0$
 \begin{align*}
     \langle \mathcal{Z}_t,1\rangle &=\langle\mathcal{Z}_0,1\rangle + \int_0^t\int_{\mathbb{R}^2} \big(b_1(x)+b_2(x)(t-s)-d(x,\mathcal{Z}_s)\big) \mathcal{Z}_s(dx) ds\\
     & \le \langle\mathcal{Z}_0,1\rangle + \int_0^t (B_1+B_2(t-s))\langle \mathcal{Z}_s,1\rangle ds.
 \end{align*}
Using Gronwall's Lemma, we obtain for every $t\geq 0$
\begin{equation}
\label{ineq_Zlim}
    \langle \mathcal{Z}_{t},1\rangle \le \langle \mathcal{Z}_{0},1\rangle e^{B_1t+\frac{B_2}{2}t^2}.
\end{equation}

Let $\varphi$ be a bounded measurable function on $\mathbb{R}^2$ and $t \ge 0$ and let us set for every $x \in \mathbb{R}^2$
\begin{equation}
\label{def_f}
\begin{aligned}
    f_s(x) &= \int_{\mathbb{R}^2} \varphi(y)g_{t-s}(x-y)dy, \text{ for every $s \in [0,t)$}\\
    f_t(x) &:= \varphi(x),
\end{aligned}
\end{equation}
where
\begin{equation}
\label{def_g}
    g_t(x) = \frac{1}{2 \pi \sigma^2 t} \exp\left(-\frac{|x|^2}{2 \sigma^2 t}\right).
\end{equation}
Since $\varphi$ is bounded, we have
\begin{equation*}
    \frac{\partial f_s}{\partial s} + \dfrac{\sigma^2}{2} \sum\limits_{i=1}^2 \frac{\partial^2 f_s}{\partial x_i^2} = \int_{\mathbb{R}^2} \varphi(y)\left[\frac{\partial g_{t-s}}{\partial s}(x-y) + \dfrac{\sigma^2}{2} \sum\limits_{i=1}^2 \frac{\partial^2 g_{t-s}}{\partial x_i^2}\right]dy = 0,
\end{equation*}
and
\begin{equation*}
    \lim\limits_{s \to t^{-}} f_s = \varphi = f_t.
\end{equation*}
Using \eqref{eq_Zlim_s} with $(f_s)_{0 \le s \le t}$, we have
\begin{equation}
\label{eq_phi}
\begin{aligned}
    \langle \mathcal{Z}_t,\varphi \rangle & = \langle \mathcal{Z}_0,f_0 \rangle + \int_0^t\int_{\mathbb{R}^2} \Bigg[\left(\sum\limits_{i=1}^2 \frac{\partial f_s}{\partial x_i}(x) \int_0^s\int_{\mathbb{R}^2} L^i_{s-r}(x-y) \mathcal{Z}_r(dy)dr  \right)\\
    & \hspace{3.5cm} + (b_1(x)+b_2(x)(t-s)-d(x,\mathcal{Z}_s))f_s(x) \Bigg]\mathcal{Z}_s(dx)ds,\\
    &= \langle \mathcal{Z}_0,f_0 \rangle + \int_0^t\int_{\mathbb{R}^2} \Bigg[\nabla_x f_s(x) \cdot \left(\int_0^s\int_{\mathbb{R}^2} L_{s-r}(x-y) \mathcal{Z}_r(dy)dr\right) \\
    & \hspace{3.5cm} + (b_1(x)+b_2(x)(t-s)-d(x,\mathcal{Z}_s)) f_s(x)\Bigg]\mathcal{Z}_s(dx)ds.
\end{aligned}
\end{equation}
We can then write, using the fact that $\mathcal{Z}_0 = \widetilde{\mathcal{Z}}_0$ by assumption,
\begin{align*}
    \langle \mathcal{Z}_t - \widetilde{\mathcal{Z}}_t,\varphi \rangle =& \int_0^t\int_{\mathbb{R}^2} \nabla_x f_s(x) \cdot \left(\int_0^s\int_{\mathbb{R}^2} L_{s-r}(x-y) (\mathcal{Z}_r-\widetilde{\mathcal{Z}}_r)(dy)dr\right)\mathcal{Z}_s(dx)ds \\
    & + \int_0^t\int_{\mathbb{R}^2} \nabla_x f_s(x) \cdot \left(\int_0^s\int_{\mathbb{R}^2} L_{s-r}(x-y) \widetilde{\mathcal{Z}}_r(dy)dr\right)(\mathcal{Z}_s-\widetilde{\mathcal{Z}}_s)(dx)ds\\
    & + \int_0^t\int_{\mathbb{R}^2}(b_1(x)+b_2(x)(t-s))f_s(x)(\mathcal{Z}_s - \widetilde{\mathcal{Z}}_s)(dx)ds\\
    & + \int_0^t\int_{\mathbb{R}^2} (d(x,\mathcal{Z}_s) - d(x,\widetilde{\mathcal{Z}}_s))f_s(x)\mathcal{Z}_s(dx)ds + \int_0^t\int_{\mathbb{R}^2} d(x,\widetilde{\mathcal{Z}}_s)f_s(x) (\mathcal{Z}_s-\widetilde{\mathcal{Z}}_s)(dx)ds.
\end{align*}
Let us bound each of these terms separately. We have
\begin{equation}
\label{ineq_4.3.1}
\begin{aligned}
    &\left| \int_0^t\int_{\mathbb{R}^2} \nabla_x f_s(x) \cdot \left(\int_0^s\int_{\mathbb{R}^2} L_{s-r}(x-y) (\mathcal{Z}_r-\widetilde{\mathcal{Z}}_r)(dy)dr\right)\mathcal{Z}_s(dx)ds \right| \\
    &\le \int_0^t \int_{\mathbb{R}^2}  |\nabla_x f_s(x)|\left| \int_0^s\int_{\mathbb{R}^2} L_{s-r}(x-y) (\mathcal{Z}_r-\widetilde{\mathcal{Z}}_r)(dy)dr \right| \mathcal{Z}_s(dx)ds
\end{aligned}
\end{equation}
Using Assumptions \eqref{L_1} and \eqref{L_2}, we have
\begin{align*}
    \left| \int_0^s\int_{\mathbb{R}^2} L_{s-r}(x-y) (\mathcal{Z}_r-\widetilde{\mathcal{Z}}_r)(dy)dr \right| & \le \int_0^s \|L_{s-r}\|_{\infty}\bigg| \left\langle \mathcal{Z}_r-\widetilde{\mathcal{Z}}_r, \frac{L_{s-r}}{\|L_{s-r}\|_{\infty}} \right\rangle \bigg| \\
    & \le  \int_0^s h_1(s-r) \sup\limits_{\|\phi\|_{\infty} \le 1} \langle \mathcal{Z}_r-\widetilde{\mathcal{Z}}_r, \phi \rangle dr\\
    & \le \|h_1\|_1 \ \sup\limits_{r \le s} \sup\limits_{\|\phi\|_{\infty} \le 1} \langle \mathcal{Z}_r-\widetilde{\mathcal{Z}}_r, \phi \rangle,
\end{align*}
where $\|h_1\|_1 = \displaystyle\int_0^{+\infty} |h_1(t)|dt<\infty$. Moreover, we have for every $s < t$
\begin{align*}
    \left| \nabla_x f_s(x) \right| &\le \|\varphi\|_{\infty} \int_{\mathbb{R}^2} |\nabla_x g_{t-s}(y)|dy \\
    & = \|\varphi\|_{\infty} \int_{\mathbb{R}^2} \frac{|y|}{2\pi \sigma^4 (t-s)^2} \exp\left(-\frac{|y|^2}{2\sigma^2(t-s)}\right)dy\\
    &= \|\varphi\|_{\infty}  \sqrt{\frac{\pi}{2\sigma^2(t-s)}}.
\end{align*}
Thus, combining \eqref{ineq_Zlim} and \eqref{ineq_4.3.1} we obtain
\begin{equation}
\label{ineq_Prop_4.3.1}
\begin{aligned}
        &\left| \int_0^t\int_{\mathbb{R}^2} \nabla_x f_s(x) \cdot \left(\int_0^s\int_{\mathbb{R}^2} L_{s-r}(x-y) (\mathcal{Z}_r-\widetilde{\mathcal{Z}}_r)(dy)dr\right)\mathcal{Z}_s(dx)ds \right| \\
        &\le \|\varphi\|_{\infty} \langle \mathcal{Z}_0, 1\rangle e^{B_1t + \frac{B_2}{2}t^2} \int_0^t \sqrt{\frac{\pi}{2\sigma^2(t-s)}} \sup\limits_{r \le s} \sup\limits_{\|\phi\|_{\infty} \le 1} \langle \mathcal{Z}_r-\widetilde{\mathcal{Z}}_r, \phi \rangle ds.
\end{aligned}
\end{equation}
Similarly, we have
\begin{align*}
    \left| \nabla_x f_s(x) \cdot \left(\int_0^s\int_{\mathbb{R}^2} L_{s-r}(x-y) \widetilde{\mathcal{Z}}_r(dy)dr\right) \right| &\le\|\varphi\|_{\infty} \sqrt{\frac{\pi}{2\sigma^2(t-s)}} \int_0^s h_1(s-r) \langle \widetilde{\mathcal{Z}}_r,1 \rangle dr \\
    &\le \|\varphi\|_{\infty} \|h_1\|_1 \langle \mathcal{Z}_0, 1\rangle e^{B_1t + \frac{B_2}{2}t^2} \sqrt{\frac{\pi}{2\sigma^2(t-s)}} ,
\end{align*}
and so
\begin{align}
\label{ineq_Prop_4.3.2}
    & \left|\int_0^t\int_{\mathbb{R}^2} \nabla_x f_s(x) \cdot \left(\int_0^s\int_{\mathbb{R}^2} L_{s-r}(x-y) \widetilde{\mathcal{Z}}_r(dy)dr\right)(\mathcal{Z}_s-\widetilde{\mathcal{Z}}_s)(dx)ds\right| \nonumber\\
    &\le \|\varphi\|_{\infty} \|h_1\|_1 \langle \mathcal{Z}_0, 1\rangle e^{B_1t + \frac{B_2}{2}t^2} \int_0^t  \sqrt{\frac{\pi}{2\sigma^2(t-s)}} \sup\limits_{\|\phi\|_{\infty} \le 1} \langle \mathcal{Z}_s-\widetilde{\mathcal{Z}}_s, \phi \rangle ds \nonumber \\
    &\le  \|\varphi\|_{\infty} \|h_1\|_1 \langle \mathcal{Z}_0, 1\rangle e^{B_1t + \frac{B_2}{2}t^2} \int_0^t \sqrt{\frac{\pi}{2\sigma^2(t-s)}} \sup\limits_{r \le s} \sup\limits_{\|\phi\|_{\infty} \le 1} \langle \mathcal{Z}_r-\widetilde{\mathcal{Z}}_r, \phi \rangle ds.
\end{align}
Next, we have
\begin{equation*}
    \left|\int_0^t\int_{\mathbb{R}^2}(b_1(x)+b_2(x)(t-s))f_s(x)(\mathcal{Z}_s - \widetilde{\mathcal{Z}}_s)(dx)ds\right| \le (B_1 + B_2 t) \int_0^t \|f_s\|_{\infty} \sup\limits_{r \le s} \sup\limits_{\|\phi\|_{\infty} \le 1} \langle \mathcal{Z}_r-\widetilde{\mathcal{Z}}_r, \phi \rangle ds,
\end{equation*}
and
\begin{equation*}
    \|f_s\|_\infty \le \|\varphi\|_{\infty} \int_{\mathbb{R}^2} g_{t-s}(x-y)dy = \|\varphi\|_{\infty}.
\end{equation*}
Hence,
\begin{equation}
\label{ineq_Prop_4.3.3}
    \left|\int_0^t\int_{\mathbb{R}^2}(b_1(x)+b_2(x)(t-s))f_s(x)(\mathcal{Z}_s - \widetilde{\mathcal{Z}}_s)(dx)ds\right| \le (B_1 + B_2 t) \|\varphi\|_{\infty} \int_0^t \sup\limits_{r \le s} \ \sup\limits_{\|\phi\|_{\infty} \le 1} \langle \mathcal{Z}_r-\widetilde{\mathcal{Z}}_r, \phi \rangle ds.
\end{equation}
Using \eqref{ineq_Zlim} and Assumption \eqref{d_2}, we have
\begin{equation}
\label{ineq_Prop_4.3.4}
\begin{aligned}
    \left|\int_0^t\int_{\mathbb{R}^2} \left(d(x,\mathcal{Z}_s) -d(x, \widetilde{\mathcal{Z}}_s)\right)f_s(x) \mathcal{Z}_s(dx)ds\right| \le \|\varphi\|_{\infty}& \int_0^t\int_{\mathbb{R}^2} |\langle \mathcal{Z}_s -\widetilde{\mathcal{Z}}_s, \Psi \rangle | \mathcal{Z}_s(dx)ds\\
    \le \|\varphi\|_{\infty}& \|\Psi\|_\infty \langle \mathcal{Z}_0,1 \rangle e^{B_1t + \frac{B_2}{2}t^2}\\
     & \times \int_0^t  \sup\limits_{r \le s} \ \sup\limits_{\|\phi\|_{\infty} \le 1} \langle \mathcal{Z}_r-\widetilde{\mathcal{Z}}_r, \phi \rangle ds.
\end{aligned}
\end{equation}
As stated in Remark~\ref{rem_assumptions}, Assumption \eqref{d_2} has not been used elsewhere, and is only used to obtain \eqref{ineq_Prop_4.3.4}. Finally, using Assumption \eqref{d_1} and \eqref{ineq_Zlim}, we have
\begin{equation}
\label{ineq_Prop_4.3.5}
    \left|\int_0^t\int_{\mathbb{R}^2} d(x,\mathcal{Z}_s)f_s(x) (\mathcal{Z}_s-\widetilde{\mathcal{Z}}_s)(dx)ds\right| \le C \langle \mathcal{Z}_0,1 \rangle e^{B_1t + \frac{B_2}{2}t^2} \|\varphi\|_\infty \int_0^t  \sup\limits_{r \le s} \ \sup\limits_{\|\phi\|_{\infty} \le 1} \langle \mathcal{Z}_r-\widetilde{\mathcal{Z}}_r, \phi \rangle ds.
\end{equation}
Thus, combining \eqref{ineq_Prop_4.3.1}, \eqref{ineq_Prop_4.3.2}, \eqref{ineq_Prop_4.3.3}, \eqref{ineq_Prop_4.3.4} and \eqref{ineq_Prop_4.3.5}, we obtain, for every bounded function $\varphi$ and $t \ge 0$
\begin{equation*}
     \langle \mathcal{Z}_t-\widetilde{\mathcal{Z}}_t, \varphi \rangle \le \widetilde{C} \langle \mathcal{Z}_0, 1\rangle e^{B_1t + \frac{B_2}{2}t^2} \|\varphi\|_{\infty} \int_0^t \left(\frac{1}{\sqrt{t-s}} + 1\right) \sup\limits_{r \le s} \ \sup\limits_{\|\phi\|_{\infty} \le 1} \langle \mathcal{Z}_r-\widetilde{\mathcal{Z}}_r, \phi \rangle ds,
\end{equation*}
for some constant $\widetilde{C}$ independent of $t$, so that for every $t \ge 0$
\begin{equation}\label{bound}
    \sup\limits_{\|\phi\|_{\infty} \le 1} \langle \mathcal{Z}_t-\widetilde{\mathcal{Z}}_t, \phi \rangle \le \widetilde{C} \langle \mathcal{Z}_0, 1\rangle e^{B_1t + \frac{B_2}{2}t^2} \int_0^t \left(\frac{1}{\sqrt{t-s}} + 1\right) \sup\limits_{r \le s} \ \sup\limits_{\|\phi\|_{\infty} \le 1} \langle \mathcal{Z}_r-\widetilde{\mathcal{Z}}_r, \phi \rangle ds.
\end{equation}
Let $t' \ge t \ge 0$. Changing variable in $s'=s+t-t'$, we have
\begin{align*}
& \int_0^{t'} \bigg(\frac{1}{\sqrt{t'-s}} + 1\bigg) \sup\limits_{r \le s} \ \sup\limits_{\|\phi\|_{\infty} \le 1} \langle \mathcal{Z}_r-\widetilde{\mathcal{Z}}_r, \phi \rangle ds \\
& = \int_{t-t'}^t \left(\frac{1}{\sqrt{t-s'}} + 1\right) \sup\limits_{r \le s'+(t'-t)} \ \sup\limits_{\|\phi\|_{\infty} \le 1} \langle \mathcal{Z}_r-\widetilde{\mathcal{Z}}_r, \phi \rangle ds'\\
& \geq \int_0^t \left(\frac{1}{\sqrt{t-s'}} + 1\right) \sup\limits_{r \le s'} \ \sup\limits_{\|\phi\|_{\infty} \le 1} \langle \mathcal{Z}_r-\widetilde{\mathcal{Z}}_r, \phi \rangle ds',
\end{align*}
from which we deduce that the integral appearing on the right hand side of~\eqref{bound} is a nondecreasing function of $t$. Thus, taking the supremum over $[0,t]$, we obtain for every $t \ge 0$
\begin{equation}
    \sup\limits_{s \le t} \ \sup\limits_{\|\phi\|_{\infty} \le 1} \langle \mathcal{Z}_s-\widetilde{\mathcal{Z}}_s, \phi \rangle \le \widetilde{C} \langle \mathcal{Z}_0, 1\rangle e^{B_1t + \frac{B_2}{2}t^2}  \int_0^t \left(\frac{1}{\sqrt{t-s}} + 1\right) \sup\limits_{r \le s} \ \sup\limits_{\|\phi\|_{\infty} \le 1} \langle \mathcal{Z}_r-\widetilde{\mathcal{Z}}_r, \phi \rangle ds.
\end{equation}
Let now $T \ge 0$. We have just proved that for every $0 \le t \le T$,
\begin{equation*}
    \sup\limits_{s \le t} \ \sup\limits_{\|\phi\|_{\infty} \le 1} \langle \mathcal{Z}_s-\widetilde{\mathcal{Z}}_s, \phi \rangle \le \widetilde{C} \langle \mathcal{Z}_0, 1\rangle e^{B_1T + \frac{B_2}{2}T^2}  \int_0^t \left(\frac{1}{\sqrt{t-s}} + 1\right) \sup\limits_{r \le s} \ \sup\limits_{\|\phi\|_{\infty} \le 1} \langle \mathcal{Z}_r-\widetilde{\mathcal{Z}}_r, \phi \rangle ds,
\end{equation*}
and, using \eqref{ineq_Zlim}, we also have that for every $0 \le t \le T$,
\begin{equation*}
    \sup\limits_{s \le t} \ \sup\limits_{\|\phi\|_{\infty} \le 1} \langle \mathcal{Z}_s-\widetilde{\mathcal{Z}}_s, \phi \rangle \le 2 \langle \mathcal{Z}_0, 1\rangle e^{B_1T+\frac{B_2}{2}T^2}.
\end{equation*}
As a consequence, we can use Theorem~3.3.1 in \cite{Amann} (a singular Gronwall lemma) to obtain that for every $T \ge 0$ and $0 \le t \le T$,
$$
\sup_{s \le t} \ \sup\limits_{\|\phi\|_{\infty} \le 1} \langle \mathcal{Z}_s-\widetilde{\mathcal{Z}}_s, \phi \rangle = 0,
$$
from which we conclude that $\mathcal{Z} = \widetilde{\mathcal{Z}}$. This was the result we were aiming for.

\subsection{Absolute continuity of the limiting measures}
\label{4.4}
In this section, we follow again the approach using the mild form proposed, for example, in \cite{Fournier} to show that, for every $t \ge 0$, the limiting measure $\mathcal{Z}^{\infty}_t$ is absolutely continuous with respect to Lebesgue measure on $\mathbb{R}^2$.

Let $\varphi$ be a bounded nonnegative measurable function and $t >0$, and recall the definition of $(f_s)_{s\le t}$ given in \eqref{def_f}. As in \eqref{eq_phi}, we have
\begin{align*}
    &\langle \mathcal{Z}_t^\infty,\varphi \rangle = \langle \mathcal{Z}_0^\infty,f_0 \rangle + \int_0^t\int_{\mathbb{R}^2} \Bigg[\nabla_x f_s(x) \cdot \left(\int_0^s\int_{\mathbb{R}^2} L_{s-r}(x-\Tilde{x}) \mathcal{Z}_r^\infty(d\Tilde{x})dr\right) \\
    & \hspace{5cm}+ (b_1(x)+b_2(x)(t-s)-d(x,\mathcal{Z}_s^\infty)) f_s(x)\Bigg]\mathcal{Z}_s^\infty(dx)ds.
\end{align*}
Using Fubini's theorem, we can write
\begin{align*}
    \langle \mathcal{Z}_0^\infty,f_0 \rangle & = \int_{\mathbb{R}^2}\int_{\mathbb{R}^2} \varphi(y)g_t(x-y)dy \mathcal{Z}_0^\infty(dx) =  \int_{\mathbb{R}^2} \varphi(y) \left(\int_{\mathbb{R}^2} g_t(x-y)\mathcal{Z}_0^\infty(dx)\right) dy.
\end{align*}
Similarly, we can write
\begin{align*}
    &\int_0^t\int_{\mathbb{R}^2} \nabla_x f_s(x) \cdot \left(\int_0^s\int_{\mathbb{R}^2} L_{s-r}(x-\Tilde{x}) \mathcal{Z}_r^\infty(d\Tilde{x})dr\right)\mathcal{Z}_s^\infty(dx)ds \\
    &= \int_0^t\int_{\mathbb{R}^2} \left(\int_{\mathbb{R}^2} \varphi(y) \nabla_x g_{t-s}(x-y)dy\right) \cdot \left(\int_0^s\int_{\mathbb{R}^2} L_{s-r}(x-\Tilde{x}) \mathcal{Z}_r^\infty(d\Tilde{x})dr\right)\mathcal{Z}_s^\infty(dx)ds\\
    &= \int_{\mathbb{R}^2} \varphi(y) \left[ \int_0^t \int_{\mathbb{R}^2} \nabla_x g_{t-s}(x-y) \cdot \left(\int_0^s\int_{\mathbb{R}^2} L_{s-r}(x-\Tilde{x}) \mathcal{Z}_r^\infty(d\Tilde{x})dr\right)\mathcal{Z}_s^\infty(dx)ds \right] dy.
\end{align*}
Using Assumption \eqref{L_1} and \eqref{ineq_Zlim}, we have
\begin{align*}
    \nabla_x g_{t-s}(x-y)  \cdot \left(\int_0^s\int_{\mathbb{R}^2} L_{s-r}(x-\Tilde{x}) \mathcal{Z}_r^\infty(d\Tilde{x})dr\right) & \le \left|\nabla_x g_{t-s}(x-y) \right| \times \left|\int_0^s\int_{\mathbb{R}^2} L_{s-r}(x-\Tilde{x}) \mathcal{Z}_r^\infty(d\Tilde{x})dr\right| \\
    & \le \left|\nabla_x g_{t-s}(x-y) \right| \int_0^s h_1(s-r) \langle \mathcal{Z}_r^\infty ,1\rangle dr\\
    & \le \int_0^s  h_1(s-r) dr \, \langle \mathcal{Z}_0^\infty,1 \rangle e^{B_1s+\frac{B_2}{2}s^2} \left|\nabla_x g_{t-s}(x-y) \right|.
\end{align*}
Thus,
\begin{align*}
     & \int_0^t \int_{\mathbb{R}^2} \nabla_x g_{t-s}(x-y) \cdot \left(\int_0^s\int_{\mathbb{R}^2} L_{s-r}(x-\Tilde{x}) \mathcal{Z}_r^\infty(d\Tilde{x})dr\right)\mathcal{Z}_s^\infty(dx)ds \\
    & \le C_t \langle \mathcal{Z}_0^\infty,1 \rangle \int_0^t \int_{\mathbb{R}^2} \left|\nabla_x g_{t-s}(x-y) \right| \mathcal{Z}_s^\infty(dx)ds.
\end{align*}
Again, we can write
\begin{align*}
    & \int_0^t\int_{\mathbb{R}^2} (b_1(x)+b_2(x)(t-s)-d(x,\mathcal{Z}^\infty_s) )f_s(x)\mathcal{Z}_s^\infty(dx)ds \\
    & \le (B_1+B_2t) \int_0^t\int_{\mathbb{R}^2} \left(\int_{\mathbb{R}^2} \varphi(y) g_{t-s}(x-y) dy \right) \mathcal{Z}_s^\infty(dx)ds \\
    & = (B_1+B_2t) \int_{\mathbb{R}^2} \varphi(y) \left( \int_0^t\int_{\mathbb{R}^2}g_{t-s}(x-y) \mathcal{Z}_s^\infty(dx)ds \right) dy.
\end{align*}
Thus,
\begin{align*}
    \langle \mathcal{Z}_t^\infty,\varphi \rangle & \le \int_{\mathbb{R}^2} \varphi(y) \left[ \int_{\mathbb{R}^2} g_t(x-y)\mathcal{Z}_0^\infty(dx) +  C_t \int_0^t \int_{\mathbb{R}^2} \left( \left|\nabla_x g_{t-s}(x-y) \right| + g_{t-s}(x-y) \right) \mathcal{Z}_s^\infty(dx)ds \right]dy \\
    & \le  \int_{\mathbb{R}^2} \varphi(y) H(t,y) dy,
\end{align*}
where
\begin{equation*}
    H(t,y) = \int_{\mathbb{R}^2} g_t(x-y)\mathcal{Z}_0^\infty(dx) +  C_t \int_0^t \int_{\mathbb{R}^2} \left( \left|\nabla_x g_{t-s}(x-y) \right| + g_{t-s}(x-y) \right) \mathcal{Z}_s^\infty(dx)ds,
\end{equation*}
is nonnegative. Now,
\begin{equation*}
    \int_{\mathbb{R}^2} \int_{\mathbb{R}^2} g_t(x-y)\mathcal{Z}_0^\infty(dx)dy = \int_{\mathbb{R}^2} \left(\int_{\mathbb{R}^2} g_t(x-y)dy\right)\mathcal{Z}_0^\infty(dx) = \langle \mathcal{Z}_0^\infty,1 \rangle,
\end{equation*}
and, using \eqref{ineq_Zlim},
\begin{align*}
& \int_{\mathbb{R}^2} C_t \int_0^t \int_{\mathbb{R}^2} \left( \left|\nabla_x g_{t-s}(x-y) \right| + g_{t-s}(x-y) \right) \mathcal{Z}_s^\infty(dx)ds dy \\
&= C_t \int_0^t \int_{\mathbb{R}^2} \left( \int_{\mathbb{R}^2} \left|\nabla_x g_{t-s}(x-y) \right| + g_{t-s}(x-y) dy \right) \mathcal{Z}_s^\infty(dx)ds\\
&= C_t \int_0^t \left(\sqrt{\frac{\pi}{2(t-s)}} + 1 \right)   \langle \mathcal{Z}_s^\infty, 1 \rangle ds\\
&\le  C_t \langle \mathcal{Z}_0^\infty, 1\rangle e^{B_1t+\frac{B_2}{2}t^2} \int_0^t \left(\sqrt{\frac{\pi}{2s}} + 1 \right) ds < \infty.
\end{align*}
Thus $H(t,y)$ is integrable and therefore, since the inequality holds for every bounded function $\varphi$, we can conclude that for every $t > 0$, $\mathcal{Z}_t^\infty$ is absolutely continuous with respect to Lebesgue measure.

The derivation of the equation for the density is classical, and is therefore left aside.

\medskip
\begin{acknowledgements}
The author would like to thank the members of the NEMATIC consortium, funded by the ANR grant ANR21-CE40010-01, for interesting and helpful discussions on the modelling of filamentous fungi and experimental data. She particularly thanks Milica Toma\v{s}evi\'{c} and Amandine V\'{e}ber for their guidance on this project. This work was partly funded by the Chair Programme \emph{Mathematical Modelling and Biodiversity} (Ecole Polytechnique, Museum National d'Histoire Naturelle, Veolia Environnement, Fondation X).
\end{acknowledgements}

\bibliographystyle{plain}
\bibliography{bibliography.bib}

\end{document}